\documentclass{article}
\usepackage[utf8]{inputenc}

\title{An algorithm for computing the $\Upsilon$-invariant and the $d$-invariants of Dehn surgeries}

\author{Taketo Sano and Kouki Sato}



\usepackage{amsfonts}
\usepackage{amssymb}
\usepackage{amsmath}
\usepackage{amsthm}
\usepackage{mathtools}
\usepackage{stmaryrd}
\usepackage{cleveref}
\usepackage{url}
\usepackage{float}
\usepackage{adjustbox}
\usepackage{booktabs}
\usepackage[nottoc]{tocbibind}
\usepackage{subcaption}
 \usepackage{mathrsfs}
 \usepackage{mathtools}

\usepackage{tikz}
\usepackage{tikz-cd}
\usepackage{amscd}
\usepackage{multicol}


\usetikzlibrary{patterns}


\theoremstyle{plain}
    \newtheorem{proposition}{Proposition}[section]
    \newtheorem{theorem}[proposition]{Theorem}
    \newtheorem{corollary}[proposition]{Corollary}
    \newtheorem{lemma}[proposition]{Lemma}

\theoremstyle{definition}
    \newtheorem{definition}[proposition]{Definition}

\theoremstyle{remark}
	\newtheorem{remark}[proposition]{Remark}%

    \def\G{\mathcal{G}}
    \def\tG{\widetilde{\mathcal{G}}}
    \def\mF{\mathcal{F}}
    \def\z{\mathbb{Z}}
    \def\CR{\mathcal{CR}}
    
    \def\F{\mathbb{F}}

    \def\Z{\mathbb{Z}}
    \def\R{\mathbb{R}}
    \def\Q{\mathbb{Q}}
    \def\np{\nu^+}
    \def\nuplus{\overset{\np}{\sim}}
    \def\mC{\mathcal{C}}

    \DeclareMathOperator{\Alex}{Alex}
    \DeclareMathOperator{\Alg}{Alg}
    \DeclareMathOperator{\gr}{gr}
    \DeclareMathOperator{\Hom}{Hom}
    
    \DeclareMathOperator{\Spin}{Spin}

    \DeclareMathOperator{\PL}{PL}
    \newcommand{\falex}[1]{\mF^{\Alex}_{#1}}
    \newcommand{\falg}[1]{\mF^{\Alg}_{#1}}
       
    \newcommand{\heq}[2]{\underset{#1\text{-filt.}/#2}{\simeq}}

    \def\x{\mathbf{x}}
    \def\y{\mathbf{y}}
    \DeclareMathOperator{\shift}{shift}

\begin{document}

    \maketitle
    
    \begin{abstract}
    By using grid homology theory, we give an explicit algorithm for computing Ozsv\'ath-Stipsicz-Szab\'o's $\Upsilon$-invariant and the $d$-invariant of Dehn surgeries along knots in $S^3$. As its application, we compute the two invariants for all prime knots with up to 11 crossings. 
    \end{abstract}
    
    \section{Introduction} \label{sec:intro}
The {\it $\Upsilon$-invariant} 
defined by Ozsv\'ath, Stipsicz and Szab\'o \cite{OSS17}
is a group homomorphism
\[
\Upsilon \colon \mC \to \PL([0,2], \R),
\]
where $\mC$ denotes the smooth knot concordance group 
and $\PL([0,2], \R)$ the vector space of piecewise-linear functions on $[0,2]$. 
The $\Upsilon$-invariant is known as a very effective tool in the study of smooth knot concordance. Indeed, the slope of $\Upsilon$ gives a
surjective homomorphism
$\mC \to \Z^\infty$ whose image is generated by topologically slice knots, and this proves that the subgroup of $\mC$ generated by topologically slice knots contains a direct summand isomorphic to $\Z^\infty$.
For a given knot $K$ in $S^3$, we denote the $\Upsilon$-invariant of $K$ by $\Upsilon_K$.

The {\it $d$-invariant} defined by Ozsv\'ath and Szab\'o \cite{OS03grading} is a group homomorphism
\[
d \colon \theta^c \to \Q,
\]
where $\theta^c$ denotes the $\Spin^c$ 
rational homology cobordism group of $\Spin^c$ rational homology 3-spheres (see \cite[Definition 1.1]{OS03grading} for the precise definition of $\theta^c$).
The $d$-invariant is also known as a very powerful invariant so that
it enables us to reprove Donaldson's diagonalization theorem.
Moreover, the $d$-invariants of 
Dehn surgeries and prime-powered branched covers along knots in $S^3$ 
yield an infinite family of smooth knot concordance invariants
(see \cite{Hom17} for a survey describing these facts and studies of them). 
For any knot $K$ in $S^3$ and coprime integers $p$ and $q$ with $p \neq 0$ and $q>0$, let $S^3_{p/q}(K)$ denote the $p/q$-surgery along $K$. 
In this paper, we focus on the $d$-invariant of $S^3_{p/q}(K)$.
\footnote{While $0$-surgeries are not rational homology 3-spheres, the $d$-invariants of them are also defined. For the cases of knots in $S^3$, they are determined by $d(S^3_{\pm 1}(K))$. See \cite[Section 4.2]{OS03grading}.}
Note that since $S^3_{p/q}(K)$ has $|p|$ different $\Spin^c$ structures, the $d$-invariant of $S^3_{p/q}(K)$ is obtained as a $|p|$-tuple of rational numbers. We denote the $|p|$-tuple by $d(S^3_{p/q}(K))$.

The invariants $\Upsilon_K$ and $d(S^3_{p/q}(K))$ were originally defined in different packages of Heegaard Floer theory, but later Livingston \cite{Liv17} and Ni-Wu \cite{NW15} have translated them 
into the words of the doubly filtered chain complex $CFK^{\infty}(K)$ defined in \cite{OS04knot}. 
Moreover, 
$CFK^{\infty}(K)$ 
has a combinatorial description
called {\it grid homology theory} introduced by
Manolescu, Ozsv\'ath and Sarkar \cite{MOS09grid1}.
Therefore, it is natural to consider computing $\Upsilon_K$ and $d(S^3_{p/q}(K))$ 
algorithmically by using grid homology theory.
However, while $CFK^\infty(K)$ is freely and finitely generated over the Laurent polynomial $\F[U^{\pm 1}]$ (where $\F:=\Z/2\Z$), the combinatorial chain complex $C^\infty(G)$ corresponding to $CFK^{\infty}(K)$ is freely and finitely generated over $\F[U^{\pm 1}, U^{\pm 1}_2, \ldots, U_n^{\pm 1}]$ 
with $n>1$. 
In particular,
$C^\infty(G)$ has infinite rank over $\F[U^{\pm 1}]$,
and hence 
there is no obvious way to
use $C^\infty(G)$ for 
algorithmic computation of $\Upsilon_K$ and $d(S^3_{p/q}(K))$.
In this paper, we provide a method for avoiding 
the infinite-dimensional problem, and give an explicit algorithm 
for computing $\Upsilon_K$ and $d(S^3_{p/q}(K))$ of any knot $K$ in $S^3$ and $p/q \in \Q$.
Moreover, as its applications, we complete the lists of
\begin{itemize}
    \item the $\Upsilon$-invariants of  all prime knots with up to 11 crossings, 
    \item all $d$-invariants of all Dehn surgeries along all prime knots with up to 11 crossings, and
    \item all prime knots with up to 12 crossings whose $\Upsilon$ and $d$-invariants coincide with those of the unknot.
\end{itemize}

\subsection{The Key idea}
\label{subsec: key ideas}
In order to give an algorithm for computing 
$\Upsilon_K$
and $d(S^3_{p/q}(K))$, 
we will translate these invariants several times.
The key point is to use the second author's invariant $\G_0(K)$ \cite{2019arXiv190709116S},
from which we can easily compute both $\Upsilon_K$
and $d(S^3_{p/q}(K))$. While $\G_0(K)$ was also defined using $CFK^\infty(K)$, we show in this paper that it
can be translated into the words of the subcomplex 
$CFK^-(K)$ over $\F[U]$. The combinatorial chain complex $C^-(G)$ corresponding to $CFK^-(K)$ is finitely and freely generated by $\F[U, U_2, \ldots, U_n]$ and so 
it has infinite rank over $\F[U]$, but as a graded $\F$-vector space, each degree of $C^-(G)$ has finite rank. (It does not hold for $C^\infty(G)$, i.e.\ each degree of $C^\infty(G)$ has infinite rank over $\F$.)
This fact enables us to reduce the problem to 
finite $\F$-linear systems, which makes $\G_0(K)$ algorithmically computable.

\subsection{Observations of computation results}
\label{subsec: observations}

Here we show several observations of our computation results.
We first state the relationship between
the {\it $\tau$-invariants} \cite{OS03tau}
and the {\it $\np$-equivalence classes} 
\cite{Hom17, KP18} of knots with up to 12 crossings.
Here, the {\it $\tau$-invariant} is a group homomorphism $\mC \to \Z$  
which is already computed for all knots with up to 11 crossings \cite{BG12}.
(Now, the list of the $\tau$-invariants for knots with up to 12 crossings is available in KnotInfo~\cite{Knotinfo}.)
The {\it $\np$-equivalence} is an equivalence relation 
on $\mC$ whose quotient set can be regarded as a quotient group $\mC_{\np}$ of $\mC$.
It is proved in $\cite{Hom17}$ that a knot $K$ is $\np$-equivalent to the unknot if and only if $CFK^\infty(K)$ is filtered chain homotopic to $CFK^\infty(O) \oplus A$, where $O$ denotes the unknot and $A$ is an acyclic complex, i.e.\ $H_*(A)=0$. 
Moreover, all of $\tau$, $\Upsilon_K$ and $d(S^3_{p/q}(K))$ are invariant under $\np$-equivalence.
In particular, the triviality of $K$ in $\mC_{\np}$ implies $\tau(K)=0$, while the converse does not hold in general.
(For instance, the composite knot $T_{2,7}\# (T_{3,4})^*$ is such an example, where $T_{p,q}$ denotes the positive
$(p,q)$ torus knot and $K^*$ the mirror image of $K$.)
On the other hand, it is also known that the converse holds for all quasi-alternating knots \cite{Pet13} and all genus one knots \cite{2019arXiv190709116S}. 
As an observation of our computation results, we show that the converse also holds for any prime knot with up to 12 crossings. 

\begin{theorem}
\label{thm: converse}
For any prime knot $K$ with up to 12 crossings, 
$K$ is $\np$-equivalent to the unknot 
if and only if $\tau(K)=0$.
\end{theorem}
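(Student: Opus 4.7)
The forward direction is immediate: the $\tau$-invariant is $\np$-invariant, as noted in the paragraph preceding the theorem, so $\np$-equivalence to the unknot forces $\tau(K) = 0$. All substantive work lies in the converse, which is a finite case check.

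The plan for the converse is as follows. First tabulate, using KnotInfo~\cite{Knotinfo}, the finite list $\mathcal{L}$ of prime knots with at most $12$ crossings and $\tau(K) = 0$. Partition $\mathcal{L}$ into three sublists: (i) quasi-alternating knots, (ii) non-quasi-alternating knots of genus one, and (iii) everything else. Both quasi-alternatingness and knot genus are tabulated in KnotInfo, so the partition is a table lookup. By Petkova~\cite{Pet13}, every knot in (i) is $\np$-equivalent to the unknot, and by the second author's result \cite{2019arXiv190709116S} so is every knot in (ii). Neither case needs any further argument.

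For sublist (iii), apply the algorithm introduced in this paper. For each such $K$, fix a grid presentation, build the combinatorial complex $C^-(G)$, and use the algorithm to compute $\G_0(K)$ through the reduction to $CFK^-(K)$ described in Section~\ref{subsec: key ideas}. Since $\np$-equivalence to the unknot is detected by a splitting of $CFK^\infty(K)$ as $CFK^\infty(O) \oplus A$ with $A$ acyclic (the criterion recalled just before the theorem), verify from the computed model that such a splitting is realized. As an independent cross-check, the list of prime knots with $\Upsilon$ and $d$-invariants agreeing with those of the unknot, completed in the third bullet of the introduction, must contain every $K \in \mathcal{L}$; this consistency is a strong (though not sufficient) first filter.

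The main obstacle is step (iii). Agreement of $\Upsilon_K$ and the surgery $d$-invariants with those of the unknot is strictly weaker than $\np$-equivalence to the unknot, so one cannot read the conclusion off of the third bullet alone; a genuine grid-homology computation producing a witness to the $CFK^\infty(O) \oplus A$ splitting is needed. This is precisely where the reduction to the graded-wise finite-rank complex $C^-(G)$, the key idea of this paper, is indispensable, turning an a priori infinite-dimensional problem into a finite $\F$-linear one that the algorithm can decide in each Alexander filtration level.
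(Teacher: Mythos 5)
Your overall architecture is right (forward direction by $\np$-invariance of $\tau$; converse by finite enumeration, pre-filtering by known special cases, and running the grid algorithm on the remainder), and your choice of pre-filter (quasi-alternating via Petkova, genus one via the second author) is valid, though the paper uses a slightly broader filter (\emph{slice} and \emph{homologically thin}) that shrinks the residual list further. But there is a genuine gap in the final step, where the structure of the paper's argument is different from what you describe.

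You write that the algorithm should ``produce a witness to the $CFK^\infty(O) \oplus A$ splitting.'' That is not what the algorithm does, and requiring it would derail the proof. The algorithm computes the finite set of closed regions $\G_0(K)$, not a chain-level decomposition of $CFK^\infty(K)$; there is no procedure in the paper that extracts an acyclic direct summand from the grid complex. The missing piece in your argument is \Cref{thm: detect zero}, which states that $[K]_{\np} = 0$ if and only if $\G_0(K) = \{R_{(0,0)}\}$. This is the lemma that converts the algorithm's output directly into the $\np$-triviality statement, with no splitting ever constructed. As you have written it, computing $\G_0(K)$ and then ``verifying the splitting from the computed model'' are two different tasks, and you give no way to bridge them. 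Replace the splitting criterion by the $\G_0$-criterion (\Cref{thm: detect zero}) and the proof closes: for every residual knot $K$, the algorithm verifies $\G_0(K) = \{R_{(0,0)}\}$, hence $[K]_{\np} = 0$. Your cross-check via the $\Upsilon$- and $d$-invariant bullet is fine as a sanity check but, as you yourself note, is logically not what decides the matter — so its inclusion only emphasizes the need for the correct decision criterion, which you ultimately do not state.
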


Here we also mention that the triviality of a knot in $\mC_{\np}$
is determined by the diffeomorphism type of its $0$-surgery.
(The proof immediately follows from the original definition of $\np$-equivalence in \cite{KP18} with \cite[Proposition 1.6]{NW15} and \cite[Proposition 4.12]{OS03grading}.)
\begin{proposition}
If two knots $K$ and $K'$ have the same 0-surgeries,
then $K$ is $\np$-equivalent to the unknot
if and only if
$K'$ is $\np$-equivalent to the unknot.
\end{proposition}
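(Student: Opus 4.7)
My plan is to prove the stronger claim that, for any knot $J$, the value $\np(J)$ depends only on the oriented diffeomorphism type of the 0-surgery $S^3_0(J)$. Since 0-surgery of the mirror satisfies $S^3_0(-J) = -S^3_0(J)$, any orientation-preserving identification $S^3_0(K) \cong S^3_0(K')$ also yields $S^3_0(-K) \cong S^3_0(-K')$; combined with the original definition of $\np$-equivalence from \cite{KP18}, which says $J \sim_{\np} O$ exactly when $\np(J) = \np(-J) = 0$, this stronger claim immediately implies the proposition (apply it to both $J=K$ and $J=-K$).

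To prove that $\np(J)$ is an invariant of the oriented 3-manifold $S^3_0(J)$, I would chain the other two cited results. First, by \cite[Proposition~1.6]{NW15}, $\np(J)$ is recovered from the $d$-invariant of an integer surgery; concretely one has the Ni--Wu identity
\[
\np(J) = -\tfrac{1}{2}\,d(S^3_{+1}(J)).
\]
Second, by \cite[Proposition~4.12]{OS03grading}, the $d$-invariant of $S^3_{+1}(J)$ is in turn determined by the pair of $d$-invariants $\bigl(d_{1/2}(S^3_0(J)),\, d_{-1/2}(S^3_0(J))\bigr)$ associated to the torsion $\Spin^c$ structure on the 0-surgery. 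Since these $d$-invariants depend only on $S^3_0(J)$ as an oriented rational homology $S^1\times S^2$, the composite chain exhibits $\np(J)$ as an invariant of $S^3_0(J)$, as required.

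I do not expect any substantive obstacle: once the three cited formulas are assembled, the rest is bookkeeping. The only step that calls for a moment's care is the small orientation observation $S^3_0(-J) = -S^3_0(J)$, needed because the Kim--Park formulation of $\np$-equivalence to the unknot involves both $\np(K)$ and $\np(-K)$, not $\np(K)$ alone; this is what lets the hypothesis on 0-surgeries of $K$ and $K'$ also transfer information about their mirrors.
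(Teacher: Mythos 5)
Your approach tracks the paper's exactly (same three citations chained in the same way), but one step contains a substantive error that makes your "stronger claim" unsupported as stated. The Ni--Wu identity that follows from \cite[Proposition~1.6]{NW15} at $p/q=1$ is
\[
d(S^3_{+1}(J)) \;=\; -2\,V_0(J),
\]
not $-2\,\nu^+(J)$. The invariant $\nu^+(J)$ is defined as $\min\{k\ge 0 \mid V_k(J)=0\}$, and in general $\nu^+(J)\neq V_0(J)$ (e.g.\ a knot with $V_0=V_1=1$, $V_2=0$ has $V_0=1$ but $\nu^+=2$). Consequently the chain you describe only exhibits $V_0(J)$ --- not the full integer $\nu^+(J)$ --- as an oriented-diffeomorphism invariant of $S^3_0(J)$; whether $\nu^+(J)$ itself is such an invariant is not established by these citations (and is far from obvious, given the remark in the paper that the $\nu^+$-equivalence class is \emph{not} an invariant of $0$-surgery).

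The repair is immediate and preserves your structure: you only need the \emph{vanishing} of $\nu^+$, and from the definition $\nu^+(J)=0 \Leftrightarrow V_0(J)=0$. So replace "$\nu^+(J)$ is an invariant of $S^3_0(J)$" by "the vanishing of $\nu^+(J)$, equivalently $V_0(J)=0$, is determined by the oriented manifold $S^3_0(J)$," using $V_0(J)=-\tfrac{1}{2}d(S^3_{+1}(J))$ together with \cite[Proposition~4.12]{OS03grading}. Your subsequent bookkeeping --- applying this to both $J=K$ and $J=-K$ via $S^3_0(-J)=-S^3_0(J)$, and then invoking the Kim--Park formulation $K\nuplus O \Leftrightarrow \nu^+(K)=\nu^+(-K)=0$ --- then goes through unchanged. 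One small additional remark: if the diffeomorphism $S^3_0(K)\cong S^3_0(K')$ is allowed to be orientation-reversing, the argument still works because it then identifies $S^3_0(K)$ with $S^3_0(-K')$, and $K'\nuplus O$ is equivalent to $-K'\nuplus O$.
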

\begin{remark}
The $\np$-equivalence class of a knot is not an invariant of its $0$-surgery in general. Indeed, Yasui \cite{Yas15pre} provides an infinite family of pairs of knots with the same 0-surgeries one of whose $\tau$ is greater than the other by 1. 
\end{remark}
It is meaningful to compare the above propositions 
with Piccirillo's proof of non-sliceness of the Conway knot $11n34$ \cite{Pic18pre}. 
To prove the non-sliceness of the Conway knot, Piccirillo consider the {\it knot trace}, which is a 4-manifold obtained by attaching a 0-framed 2-handle to $B^4$ along a given knot.
It is known that if two knots have the same knot traces, then the sliceness of them coincides.
Piccirillo finds a knot $K'$ whose knot trace is diffeomorphic to that of
the Conway knot, and she shows $s(K') \neq 0$, where $s$ denotes the Rasmussen's homomorphism $s \colon \mC \to 2\Z$ \cite{rasmussen2010}.
Here we note that sharing the knot trace implies sharing the 0-surgeries. Moreover,
since the Conway knot has 11 crossings and trivial $\tau$,
Theorem~\ref{thm: converse} implies that the Conway knot is $\np$-equivalent to the unknot.
Therefore, in contrast to the $s$-invariant, we have the following:
\begin{theorem}
For any knot $K$, if 
the 0-surgery along $K$
is diffeomorphic to that of the Conway knot, then $K$
is $\np$-equivalent to the unknot. In particular,
all of $\tau(K)$, $\Upsilon_K$ and $d(S^3_{p/q}(K))$
coincide with those of the unknot.
\end{theorem}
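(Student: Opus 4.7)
The plan is to chain together the three facts already assembled in the excerpt: Theorem~\ref{thm: converse} (equivalence of $\nu^+$-triviality and $\tau=0$ for prime knots of at most 12 crossings), the Proposition that $\nu^+$-triviality of the unknot class is an invariant of the $0$-surgery, and the fact that $\tau$, $\Upsilon_K$, and $d(S^3_{p/q}(K))$ are all invariants of the $\nu^+$-equivalence class.

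First I would establish the base case, namely that the Conway knot $C = 11n34$ is itself $\nu^+$-equivalent to the unknot. The knot $C$ is a prime knot with exactly $11$ crossings, so Theorem~\ref{thm: converse} applies to it. It is classically known (and tabulated in KnotInfo) that $\tau(C) = 0$; this is also recovered by the computation carried out earlier in the paper. Hence Theorem~\ref{thm: converse} immediately yields that $C$ is $\nu^+$-equivalent to the unknot $O$.

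Next, let $K$ be any knot with $S^3_0(K) \cong S^3_0(C)$. By the Proposition stated just above Theorem~\ref{thm: converse} (which asserts that two knots sharing a $0$-surgery are simultaneously $\nu^+$-equivalent or not $\nu^+$-equivalent to $O$), the $\nu^+$-triviality of $C$ transfers to $K$, so $K$ is $\nu^+$-equivalent to $O$ as well. Finally, since $\tau$, $\Upsilon$, and the $d$-invariants of Dehn surgeries are all invariant under $\nu^+$-equivalence (as recorded in the excerpt), the values of $\tau(K)$, $\Upsilon_K$, and $d(S^3_{p/q}(K))$ must agree with those of $O$, which completes the proof.

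There is essentially no obstacle in this argument beyond verifying the hypothesis $\tau(C)=0$: every other step is a direct citation of results already stated. The real content sits in Theorem~\ref{thm: converse}, which is what allows a purely numerical condition ($\tau = 0$) on a low-crossing prime knot to upgrade to full $\nu^+$-triviality; the present theorem is the clean corollary one gets by combining that upgrade with the $0$-surgery invariance property.
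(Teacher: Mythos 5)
Your proposal is correct and follows the same route as the paper: the Conway knot is a prime knot with 11 crossings and $\tau = 0$, so Theorem~\ref{thm: converse} shows it is $\nu^+$-trivial, the preceding Proposition transfers $\nu^+$-triviality across a shared $0$-surgery, and the invariance of $\tau$, $\Upsilon$, and the surgery $d$-invariants under $\nu^+$-equivalence completes the argument.
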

In particular, we cannot replace $s(K')$
in Piccirillo's proof
with any of $\tau(K')$, $\Upsilon_{K'}$ and
$d(S^3_{p/q}(K'))$.

\subsection*{Organization}
In Section~\ref{sec: review of G_0}, we review the invariant $\G_0(K)$. In Section~\ref{sec: translate G_0}, we translate $\G_0(K)$ into the words of $CFK^-(K)$.
In Section~\ref{sec: translating G_0 to C^-(G)},
we review the grid complex $C^-(G)$, and translate $\G_0(K)$
into $C^-(G)$.
In Section~\ref{sec: algorithm}, we give an algorithm for computing $\G_0(K)$ from $C^-(G)$. 
In Section~\ref{sec: results}, we show our computation results.

This paper also contains several appendices. In Appendix~\ref{sec: duality}, we give the duality theorem for $\G_0(K)$ with respect to the mirroring of $K$, which proves that $\G_0$ for the mirror $K^*$ is algorithmically determined from $\G_0(K)$. This is used for determining $d(S^3_{p/q}(K))$ where $p/q$ is negative.
In Appendix~\ref{subsec:sparse-linear-system}, we discuss methods for handling sparse linear systems. 

\subsection*{Acknowledgements}
The authors thank the first author's supervisor Mikio Furuta for his support. 
Development of the computer program and its computation on a cloud computing platform is supported by JSPS KAKENHI Grant Number 17H06461.  
The authors also thank Charles Bouillaguet for many helpful suggestions about sparse linear systems, \textit{omochimetaru} and the members of \textit{swift-developers-japan} discord server for helpful advice about the implementation of the program.
The first author thanks members of his \textit{academist fanclub}\footnote{\url{https://taketo1024.jp/supporters}} for their support.
The second author was supported by JSPS KAKENHI Grant Number 18J00808. 
    \section{A review of the invariant $\G_0$}
\label{sec: review of G_0}
In this section, we review the invariant $\G_0(K)$ defined in \cite{2019arXiv190709116S}, 
which determines both $\Upsilon_{K}$ and $d(S^3_{p/q}(K))$. 

\subsection{Knot complexes, $CFK^{\infty}$}
\label{subsec: formal knot complexes}

\subsubsection{Poset filtered chain complexes}
\label{poset filtered chain complexes}
Let $P$ be a {\it poset}, i.e.\ a set $P$ with partial order $\leq$.
Then, a \textit{lower set} $R \subset P$ is a subset
such that for any $x \in P$, if there exists an element $y \in R$ satisfying $x \leq y$, then $x \in R$. 
In this paper, we mainly consider the partial order $\leq$ on $\z^2$ given by
$(i,j)\leq(k,l)$ if $i \leq k$ and $j \leq l$.
We specially call a lower set R $\subset$ $\Z^2$ with respect to $\leq$ a {\it closed region}.


Let $\F := \z /2\z$ and $\Lambda$ be an $\F$-algebra.
In this paper, we say that
$(C,\partial)$ is a {\it chain complex $C$ over $\Lambda$}
if $(C,\partial)$ satisfies the following:
\begin{itemize}
\item $C$ is a $\Lambda$-module and $\partial \colon C \to C$ is a
$\Lambda$-linear map with $\partial \circ \partial = 0$. 
\item As an $\F$-vector space, $C$ is decomposed into $\bigoplus_{n \in \z} C_n$,
where each $C_n$ satisfies $\partial(C_n) \subset C_{n-1}$.
\end{itemize}
(Note that the $\Lambda$-action does not preserve the grading in general.)
We often abbreviate $(C,\partial)$ to $C$.
Moreover, we say that $C$ is \textit{$P$-filtered} if 
a subcomplex $C_R$ of $C$ over $\F$ is associated to each closed region $R \subset P$
so that if $R \subset R'$ then $C_R \subset C_{R'}$. 
(Here we remark that $C_R$ is not a $\Lambda$-submodule of $C$ in general.)
We call the set $\{C_R\}_{R \in \CR(P)}$ a {\it $P$-filtration} on $C$.
For instance, a $\Z$-filtration of $C$ can be identified as an ascending filtration in the usual sense, under the identification of an integer $i$ with a closed region $\{m \in \Z \mid m \leq i\}$.
Moreover, 
given two $\z$-filtrations $\{\mF^1_i\}_{i \in \z}$ and 
$\{\mF^2_j\}_{j \in \z}$ on $C$, the set
$$
\{C_R\}_{R \in \CR(\z^2)} := \{\sum_{(i,j)\in R} \mF^1_i \cap \mF^2_j\}_{R \in \CR(\z^2)}
$$
defines a $\z^2$-filtration on $C$.
We call it {\it the $\z^2$-filtration induced by the ordered pair} 
$(\{\mF^1_i\}_{i \in \z}, \{\mF^2_j\}_{j \in \z})$. 
When $C$ is endowed with such an induced $\z^2$-filtration, 
we denote by $C^r$ the $\z^2$-filtered chain complex with the same underlying complex $C$ endowed with the $\z^2$-filtration induced by $(\{\mF^2_i\}, \{\mF^1_j\})$.
We call $C^r$ {\it the reflection of $C$}.

For any two $P$-filtered chain complexes $C$ and $C'$, a map 
$f: C \to C'$ is said to be \textit{$P$-filtered}
if $f(C_R) \subset C'_R$ for any closed region $R$.
Two $P$-filtered chain complexes $C$ and $C'$ are 
\textit{$P$-filtered homotopy equivalent over $\Lambda$}
(and denoted $C \heq{P}{\Lambda} C'$)
if there exists a chain homotopy equivalence map $f: C \to C'$ over $\Lambda$, a chain homotopy inverse of $f$
and their chain homotopies
such that all of the four maps are $P$-filtered
and graded. Such $f$ is called 
a \textit{$P$-filtered homotopy equivalence map over $\Lambda$}.
Particularly, we call the above $f$ a 
\textit{$P$-filtered chain isomorphism over $\Lambda$} if $f$ is a chain isomorphism. It is obvious that 
if $C \heq{P}{\Lambda} C'$, then $C \heq{P}{\Lambda'} C'$
for any subalgebra $\Lambda' \subset \Lambda$.
Moreover, it is easy to show the following lemma.
\begin{proposition}
\label{exact}
Let $C$ and $C'$ be $P$-filtered chain complexes.
If $C \heq{P}{\Lambda} C'$, then for any closed regions $R \subset R'$,
we have an isomorphism between the long exact sequences of $\Lambda'$-modules:
$$
\begin{CD}
\cdots @>{\partial_{*}}>> H_*(C_R) 
@>{i_{*}}>> H_*(C_{R'}) @>{p_{*}}>> H_*(C_{R'}/C_{R}) 
@>{\partial_{*}}>>  \cdots \\
@.   @V{\cong}VV  @V{\cong}VV @V{\cong}VV  @. \\
\cdots @>{\partial_{*}}>> H_*(C'_R) @>{i_{*}}>> H_*(C'_{R'}) 
@>{p_{*}}>> H_*(C'_{R'}/C'_{R}) @>{\partial_{*}}>> \cdots \\
\end{CD}
$$
Here, $i \colon C_R \to C_{R'}$ (resp.\ $p \colon C_{R'} \to C_{R'}/C_R$)
denote the inclusion (resp.\ the projection),
and $\Lambda'$ is the maximal subalgebra of $\Lambda$ so that
all of $C_R$, $C_{R'}$, 
$C'_{R}$ and $C'_{R'}$ are $\Lambda'$-submodules of $C$
and $C'$, respectively.
Moreover, the above isomorphism induces an isomorphism  
between the long exact sequences of graded $\F$-vector spaces:
$$
\begin{CD}
\cdots @>{\partial_{*,n+1}}>> H_n(C_R) 
@>{i_{*,n}}>> H_n(C_{R'}) @>{p_{*,n}}>> H_n(C_{R'}/C_{R}) 
@>{\partial_{*,n}}>>  \cdots \\
@.   @V{\cong}VV  @V{\cong}VV @V{\cong}VV  @. \\
\cdots @>{\partial_{*,n+1}}>> H_n(C'_R) @>{i_{*,n}}>> H_n(C'_{R'}) 
@>{p_{*,n}}>> H_n(C'_{R'}/C'_{R}) @>{\partial_{*,n}}>> \cdots \\
\end{CD}
$$
\end{proposition}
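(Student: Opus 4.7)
The plan is to extract all the structure from the hypothesis $C \heq{P}{\Lambda} C'$ by restricting to closed regions, and then to apply the naturality of the long exact sequence of a pair. First I would unpack the assumption: there exist $P$-filtered $\Lambda$-linear graded chain maps $f \colon C \to C'$, $g \colon C' \to C$, and $P$-filtered graded chain homotopies $H \colon C \to C$, $H' \colon C' \to C'$ with $\partial H + H \partial = gf - \mathrm{id}_C$ and $\partial H' + H' \partial = fg - \mathrm{id}_{C'}$. Because each of these four maps is $P$-filtered, for every closed region $S \subset P$ we obtain restricted $\F$-linear maps $f_S \colon C_S \to C'_S$, $g_S \colon C'_S \to C_S$, $H_S$, $H'_S$ which still satisfy the same chain homotopy identities. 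Hence $f_S$ is a chain homotopy equivalence of $\F$-chain complexes, and in particular $(f_S)_* \colon H_*(C_S) \to H_*(C'_S)$ is an isomorphism for every closed $S$.

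Next, I would fix $R \subset R'$ and pass to the quotients. Because $f, g, H, H'$ restrict compatibly to both $C_R \subset C_{R'}$ and $C'_R \subset C'_{R'}$, they descend to $\F$-linear graded chain maps and homotopies between $C_{R'}/C_R$ and $C'_{R'}/C'_R$, which again furnish a chain homotopy equivalence. So $(f_{R'/R})_* \colon H_*(C_{R'}/C_R) \to H_*(C'_{R'}/C'_R)$ is an isomorphism as well. The commutative ladder in the statement is then obtained by applying the short five-lemma-style naturality of the long exact sequence associated to the pair $(C_R, C_{R'})$ versus the pair $(C'_R, C'_{R'})$: the inclusion $i$ and projection $p$ commute strictly with $f$ at the chain level, so all three squares in the ladder commute, and the vertical isomorphisms are exactly the three maps constructed above.

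For the $\Lambda'$-module refinement, the point is that $f$ and $g$ are $\Lambda$-linear, and the restrictions $f_S, g_S$ are defined on subcomplexes on which $\Lambda'$ by definition does act. Thus $f_R, g_R, f_{R'}, g_{R'}$ are $\Lambda'$-linear, and by taking quotients the induced maps on $C_{R'}/C_R$ and $C'_{R'}/C'_R$ are $\Lambda'$-linear too (using that $\Lambda'$ preserves both $C_R$ and $C_{R'}$, and similarly for $C'$). The boundary connecting maps $\partial_*$ in the long exact sequence are $\Lambda'$-linear for the same reason, so the entire ladder is a diagram of $\Lambda'$-modules. Finally, since $f, g, H, H'$ are graded, their restrictions preserve the grading, and so the vertical isomorphisms split into isomorphisms in each degree $n$; the connecting homomorphism shifts degree by $-1$, giving the second ladder of graded $\F$-vector spaces as stated.

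The proof is essentially bookkeeping, so I do not expect a serious obstacle; the only point that requires a little care is checking that the restricted chain homotopies $H_S, H'_S$ continue to witness the identities $gf - \mathrm{id}$ and $fg - \mathrm{id}$ on $C_S$ and $C'_S$, and that $\Lambda'$ as defined in the statement really does act on each of the four pieces $C_R, C_{R'}, C'_R, C'_{R'}$ so that the restricted maps are genuinely $\Lambda'$-linear. Both facts follow immediately from the definition of ``$P$-filtered'' (closure under the action of the relevant maps on subcomplexes indexed by closed regions) and from the maximality in the definition of $\Lambda'$.
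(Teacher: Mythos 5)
Your proof is correct. The paper itself does not give an argument; it simply remarks immediately before the statement that the proposition "is easy to show," so there is no proof in the paper to compare against. Your argument is the expected one: unpack the definition of $P$-filtered homotopy equivalence to get $f, g, H, H'$ that are $\Lambda$-linear, $P$-filtered, and graded; restrict to the subcomplexes $C_S$ and observe that the homotopy identities persist, so each $(f_S)_*$ is an isomorphism; pass to quotients $C_{R'}/C_R$ similarly; and then apply naturality of the long exact sequence of the short exact sequence $0 \to C_R \to C_{R'} \to C_{R'}/C_R \to 0$ (either by the five lemma or, as you do, by noting the quotient map is itself a chain homotopy equivalence). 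The $\Lambda'$-linearity check is exactly as you describe: $f$ is $\Lambda$-linear and $P$-filtered, and $\Lambda'$ is chosen precisely so that it stabilizes $C_R$, $C_{R'}$, $C'_R$, $C'_{R'}$, so the restricted and quotient maps, as well as the connecting homomorphisms (which come from a short exact sequence of $\Lambda'$-modules), are all $\Lambda'$-linear. The graded refinement follows because $f, g, H, H'$ respect the grading decomposition. Nothing is missing.
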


\subsubsection{$CFK^{\infty}$ and $CFK^-$}
\label{subsubsec: CFK}

To each knot $K$ in $S^3$, Ozsv\'ath and Szab\'o \cite{OS04knot} associate 
a $\z^2$-filtered chain complex $CFK^{\infty}(K)$ over 
$\Lambda := \F[U, U^{-1}]$ 
such that if two knots $K$ and $J$ are isotopic, then
$CFK^{\infty}(K) \heq{\z^2}{\Lambda} CFK^{\infty}(J)$.
Let $C := CFK^{\infty}(K)$.
Here we summarize the algrabraic properties:
\begin{enumerate}
\item 
$C$ is a chain complex over $\Lambda$
with decomposition $C= \bigoplus_{n \in \z}C_n$.
The grading of a homogeneous element $x$ is denoted $\gr(x)$
and called \textit{the Maslov grading} of $x$.

\item
$C$ has a $\z$-filtration $\{\falex{j}\}_{j \in \z}$ called \textit{the Alexander filtration}.
The filtration level of an element $x \in C$
is denoted 
$\Alex(x)$ (i.e.\ $\Alex(x) := 
\min \{ j \in \z \mid x \in \falex{j} \}$).

\item
$C$ also has a $\z$-filtration
$\{ \falg{i} \}_{i \in \z}$ called  
{\it the algebraic filtration}. The filtration level of an element $x$ is denoted $\Alg(x)$.
When we regard $C$ as a $\z^2$-filtered complex, we use 
the $\z^2$-filtration induced by the ordered pair 
$(\{\falg{i}\}_{i \in \z}, \{\falex{j}\}_{j \in \z})$.
\item
The action of $U$ lowers the Maslov grading by $2$,
and lowers the Alexander and algebraic 
filtration levels by $1$.
\item
$C$ is a free $\Lambda$-module with finite rank, and there exists a basis $\{x_k\}_{1 \leq k \leq r}$ such that 
\begin{itemize}
\item
each $x_k$ is homogeneous
with respect to the homological grading,
\item
$\falex{0}$ is a free $\F[U]$-module
with a basis $\{ U^{\Alex(x_k)} x_k  \}_{1 \leq k \leq r}$, 
and
\item
$\falg{0}$ is a free $\F[U]$-module
with a basis $\{ U^{\Alg(x_k)} x_k  \}_{1 \leq k \leq r}$. \end{itemize}
We call such $\{x_k\}_{1 \leq k \leq r}$ a \textit{filtered basis}.

\item
There exists a $\z^2$-filtered homotopy equivalence map $\iota: C \to C^r$ over $\Lambda$.

\item
Regard $\Lambda$ as a chain complex with trivial boundary map, and define 
the homological grading by
$$
\Lambda_n =
\left\{
\begin{array}{ll}
\{0, U^{-n/2}\} &(n: \text{ even})\\
0 & (n: \text{ odd})
\end{array}
\right.
$$
and the Alexander and 
algebraic filtrations by
$$
\falex{i}(\Lambda)=\falg{i}(\Lambda) = U^{-i} \cdot \F[U].
$$
Then there exists a $\z$-filtered homotopy equivalence map
$f_{\Alex}$ (resp.\ $f_{\Alg}$) $:C \to \Lambda$ over $\Lambda$ 
with respect to 
the Alexander
(resp.\ algebraic) filtrations.
\end{enumerate}
\begin{remark}
The above seven conditions are used as the axioms of {\it formal knot complexes}, to which we can generalize 
the definitions of the invariants $\tau(K)$, $\Upsilon_K$ and $d(S^3_{p/q}(K))$. For more details, see 
\cite[Section 2]{2019arXiv190709116S}.
\end{remark}
Particularly, the subcomplex $\falg{0}$ is denoted by 
$CFK^-(K)$.
\subsubsection{The dual of $CFK^{\infty}$}
Next, we discuss the following duality theorem for $CFK^{\infty}$, which will be used to prove
the duality theorem for $\G_0$ (\Cref{thm: duality}) and
to compute $d(S^3_{p/q}(K))$ with $p/q < 0$.
\begin{theorem}[\text{\cite[Proposition~3.7]{OS04knot}}]
\label{dual thm}
For any knot $K$ and its mirror $K^*$, 
we have
\[
CFK^\infty(K^*)\heq{\z^2}{\Lambda}(CFK^\infty(K))^*,
\]
where 
\[(CFK^\infty(K))^* := \Hom_{\Lambda}(CFK^\infty(K),\Lambda)
\]
is the dual complex of $CFK^{\infty}(K)$. 
\end{theorem}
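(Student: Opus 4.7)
The plan is to produce an explicit chain-level identification from a mirrored Heegaard diagram and then verify compatibility with the Maslov grading and both filtrations. Recall that $CFK^\infty(K)$ is built from a doubly-pointed Heegaard diagram $(\Sigma, \alpha, \beta, w, z)$ for $K$; a Heegaard diagram for the mirror $K^*$ is obtained from the same data by reversing the orientation of $\Sigma$, or equivalently by swapping the roles of the $\alpha$- and $\beta$-curves. Under this operation the intersection points $\x \in \mathbb{T}_\alpha \cap \mathbb{T}_\beta$ are unchanged, but every pseudo-holomorphic Whitney disk $\phi \in \pi_2(\x, \y)$ gets replaced by a disk $\bar\phi \in \pi_2(\y, \x)$ with the same multiplicities $n_w$ and $n_z$. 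This bijection of moduli spaces is already the combinatorial shadow of the dualization we want.

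Concretely, I would define $\Phi \colon CFK^\infty(K^*) \to (CFK^\infty(K))^*$ by sending each generator $\x$ of the mirrored complex to the dual functional $\x^\vee$, extended $\Lambda$-linearly after a $U$-power shift chosen to make $\Phi$ respect the $U$-action (recall that the $\Lambda$-action on $(CFK^\infty(K))^*$ is twisted by inversion of $U$). The observation above shows that the coefficient of $\y$ in $\partial_{K^*}\x$ equals the coefficient of $\x$ in $\partial_K\y$, so $\Phi$ intertwines the two differentials, and it is tautologically bijective on the chosen finite sets of free generators.

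With the chain isomorphism in hand, the remaining work is bookkeeping of gradings and filtrations. Under orientation reversal of $\Sigma$, the Maslov grading formula acquires an overall sign, and the Alexander and algebraic indices $\Alex$ and $\Alg$ (which count $n_z(\phi)$ and $n_w(\phi)$ respectively in the basepoint measurements for a fixed Seifert surface) are likewise negated. These are precisely the conventions used to grade and filter $\Hom_\Lambda(C, \Lambda)$: a homogeneous functional of Maslov degree $n$ sends $C_{-n}$ to $\Lambda_0$, and the dual filtration is set so that a functional lies in level $i$ iff it vanishes on the submodule of level strictly below $-i$.

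The main obstacle will be pinning down the sign and $U$-shift conventions so that the dual of a filtered basis in the sense of condition (5) is again a filtered basis, and so that the dual of the $\Z^2$-filtration constructed from the pair $(\{\falg{i}\}, \{\falex{j}\})$ agrees with the $\Z^2$-filtration induced by the reflected pair for $K^*$. Once this dictionary is fixed, $\Phi$ is manifestly a $\Z^2$-filtered $\Lambda$-linear chain isomorphism for the chosen pair of Heegaard diagrams, and invariance of $CFK^\infty$ up to $\heq{\Z^2}{\Lambda}$ upgrades this into the required filtered homotopy equivalence for arbitrary diagrams.
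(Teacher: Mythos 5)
The paper does not give its own proof of this theorem: it cites \cite[Proposition~3.7]{OS04knot} and notes in the accompanying remark that the argument (given there for $\widehat{HFK}$) carries over to $CFK^\infty$, and your sketch is exactly that extension via orientation reversal of the Heegaard diagram, the bijection $\pi_2(\x,\y)\leftrightarrow\pi_2(\y,\x)$ preserving $n_w$ and $n_z$, and the resulting chain-level duality. Your approach therefore matches the cited one; the only quibble is the parenthetical claim that the $\Lambda$-action on $\Hom_\Lambda(C,\Lambda)$ is ``twisted by inversion of $U$,'' which is not the case here --- $\Lambda=\F[U,U^{-1}]$ is commutative and the dual is taken $\Lambda$-linearly, so with the grading defined via $\varepsilon$ the natural $U$-action on $C^*$ already lowers Maslov degree by $2$ and no inversion or extra $U$-power shift is required.
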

\begin{remark}
Precisely,
\cite[Proposition~3.7]{OS04knot}
states the theorem for the knot Floer homology $\widehat{HFK}$,
while the same proof can be applied to 
$CFK^{\infty}$.
\end{remark}

Here we explain how to write the Maslov graiding and 
the Alexander and algebraic filtrations on 
$(CFK^\infty(K))^*$ in the words of $CFK^{\infty}(K)$. 
(The following arguments follow from 
\cite[Subsection 2.4]{2019arXiv190709116S}.)

Let $C := CFK^{\infty}(K)$. 
Define a $\F$-linear map
$\varepsilon : \Lambda \to \F$ by $\varepsilon (p(U)) = p(0)$ for each $p(U) \in \Lambda$
(i.e. $\varepsilon$ maps a Laurent polynomial to its constant term).
Then, 
the Maslov graiding and 
the Alexander and algebraic filtrations on 
$C^*=(CFK^\infty(K))^*$ are given by
\[
C^*_n= \left\{ \varphi \in C^* 
\ \middle|\  \varepsilon \circ \varphi (\bigoplus_{m \neq -n} C_m) = \{ 0\} \right\},
\]
\[
\falex{j}(C^*)= \left\{ \varphi \in C^* 
\ \middle|\  \varepsilon \circ 
\varphi (\falex{-j-1}) = \{ 0\} \right\},
\]
and
\[
\falg{i}(C^*)= \left\{ \varphi \in C^* 
\ \middle|\  \varepsilon \circ 
\varphi (\falg{-i-1}) = \{ 0\} \right\}.
\]
In addition, we also note that
for any filtered basis $\{ x_k \}_{1\leq k \leq r}$
for $C$, 
the dual basis $\{ x^*_k \}_{1\leq k \leq r}$
becomes a filtered basis for $C^*$.

Here we also mention the following lemma.
\begin{lemma}[\text{\cite[Lemma 2.17]{2019arXiv190709116S}}]
\label{lem: dual epsilon}
Let $C := CFK^\infty(K)$.
Then the $\F$-linear map $\varepsilon_n:C^*_{-n} \to \Hom_{\F}(C_n,\F)$
defined by $\varphi \mapsto \varepsilon \circ \varphi$
is a cochain isomorphism (where we see $\{C^*_{-n}\}_{n \in \z}$ as a graded cochain complex over $\F$).
In particular, we have $\F$-linear isomorphisms
$$
H_{-n}(C^*) \cong H^n(C_*;\F) \cong \Hom_{\F}(H_n(C_*),\F),
$$
where the first isomorphism is induced from $\varepsilon_n$.
\end{lemma}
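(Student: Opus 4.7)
The plan is to fix a homogeneous filtered basis of $C$ and verify the statement by a direct bookkeeping argument, keeping careful track of the two natural gradings on $C^*$. Choose such a basis $\{x_k\}_{1 \le k \le r}$ with Maslov gradings $d_k := \gr(x_k)$; since $C$ is free over $\Lambda = \F[U,U^{-1}]$ on this basis and $U$ lowers the Maslov grading by $2$, the finite-dimensional $\F$-vector space $C_n$ has basis $\{U^{(d_k-n)/2}x_k : d_k \equiv n \pmod 2\}$. A $\Lambda$-linear map $\varphi \colon C \to \Lambda$ is determined by the values $\varphi(x_k)$, and combining $\varphi(U^j x_k) = U^j \varphi(x_k)$ with the fact that $\varepsilon(U^j p(U))$ extracts the coefficient of $U^{-j}$ in $p(U)$ shows that the defining condition $\varphi \in C^*_{-n}$ is equivalent to $\varphi(x_k) = c_k U^{(n-d_k)/2}$ for some $c_k \in \F$ (with $c_k = 0$ whenever $d_k \not\equiv n \pmod 2$). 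Under $\varepsilon_n$, such a $\varphi$ is sent to the functional on $C_n$ taking $U^{(d_k-n)/2}x_k$ to $c_k$, which is a bijection between the dual bases. Hence $\varepsilon_n$ is an $\F$-linear isomorphism.

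Next I verify compatibility with the differentials. The dual differential $\partial^*\varphi := \varphi \circ \partial$ on $C^*$ maps $C^*_{-n}$ into $C^*_{-n-1}$ by the same grading analysis, matching the cochain differential $d\psi := \psi \circ \partial$ on $\Hom_\F(C_*,\F)$ from cohomological degree $n$ to $n+1$. For $\varphi \in C^*_{-n}$ and $x \in C_{n+1}$, noting $\partial x \in C_n$, one computes
\[
d(\varepsilon_n\varphi)(x) = (\varepsilon_n\varphi)(\partial x) = \varepsilon(\varphi(\partial x)) = \varepsilon((\partial^*\varphi)(x)) = \varepsilon_{n+1}(\partial^*\varphi)(x),
\]
which gives the desired cochain-map property.

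Passing to homology of $C^*$ and cohomology of $\Hom_\F(C_*,\F)$ then yields the first isomorphism $H_{-n}(C^*) \cong H^n(C_*;\F)$. The second isomorphism $H^n(C_*;\F) \cong \Hom_\F(H_n(C_*),\F)$ is the universal coefficient theorem with field coefficients, where the $\mathrm{Ext}^1$ term vanishes automatically since every $\F$-module is free. The only genuine obstacle is disentangling the two grading perspectives on $C^*$---the homological grading defined via the vanishing condition on $\varepsilon \circ \varphi$, versus the natural degree shift $\varphi(C_m) \subset \Lambda_{m+n}$ for $\varphi \in C^*_n$---but once the basis-level description is in hand, the rest of the verification is routine.
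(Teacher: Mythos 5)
Your proof is correct. Note that the paper does not actually prove this lemma; it is imported by citation from \cite[Lemma 2.17]{2019arXiv190709116S}, so there is no in-paper argument to compare against. Your basis-level bookkeeping is a valid self-contained derivation: the identification of the $\F$-basis $\{U^{(d_k-n)/2}x_k : d_k \equiv n \pmod 2\}$ for $C_n$, the observation that $\varphi \in C^*_{-n}$ forces $\varphi(x_k) = c_k U^{(n-d_k)/2}$ (so that $\varepsilon_n$ is a bijection of dual bases), the cochain-map verification $\varepsilon_{n+1} \circ \partial^* = d \circ \varepsilon_n$, and the final appeal to the field-coefficient universal coefficient theorem are all sound. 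Your closing remark about the two gradings on $C^*$ coinciding is also accurate: the paper's vanishing-condition grading agrees with the degree-shift grading $\varphi(C_m) \subset \Lambda_{m+n}$ once you unwind both on the filtered basis, which your computation implicitly confirms.
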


\subsection{$\np$-equivalence}
\label{np equivalence}

The {\it $\np$-equivalence} is an equivalence relation on knots
and regarded as a $CFK^{\infty}$-version of knot concordance.

First, we note that the seventh property of
$CFK^\infty$ implies
$$
H_*(CFK^\infty(K)) \cong \Lambda 
$$
and
$$
H_n(CFK^\infty(K)) \cong 
\begin{cases}
\F & (n \colon \text{even})\\
0 & (n \colon \text{odd})
\end{cases}
$$
for any knot $K$.
(This property is called {\it the global triviality}.)
Here we consider morphisms between formal knot complexes 
in terms of the global triviality.
For two knots $K$ and $J$, a chain map 
$f \colon CFK^\infty(K) \to CFK^\infty(J)$ over $\Lambda$ is a 
{\it $\Z^2$-filtered quasi-isomorphism} 
if $f$ is $\Z^2$-filtered, graded, and induces an isomorphism
$f_* \colon H_*(CFK^\infty(K)) \to H_*(CFK^\infty(J))$. 
Now,  {\it the $\np$-equivalence} (or {\it local equivalence}) is defined as follows.
\begin{definition}
Two knots $K$ and $J$ are
{\it $\np$-equivalent} (and denoted $K \nuplus J$)
if there exist $\Z^2$-filtered quasi-isomorphisms
$f \colon CFK^\infty(K) \to CFK^\infty(J)$ and 
$g \colon CFK^\infty(J) \to CFK^\infty(K)$.
\end{definition}

\begin{remark}
Originally, the $\np$-equivalence is defined by using 
{\it Hom-Wu's $\np$-invariant} \cite{HW16}, which is a $\Z_{\geq 0}$-valued knot concordance invariant. Namely,
two knots $K$ and $J$ are
$\np$-equivalent if and only if the equalities 
$\nu^+(K \# -J^*) = \nu^+(-K^* \# J)=0$ hold.
For more details, see \cite[Section 2]{2019arXiv190709116S}.
\end{remark}
It is obvious that $\nuplus$ is an equivalence relation on 
knots.
We call the equivalence class of a knot $K$ under $\nuplus$
\textit{the $\nu^+$-equivalence class} or \textit{$\nu^+$-class of $K$}, and denote
it by $[K]_{\nu^+}$. The quotient set of knots under $\nuplus$
is denoted by $\mC_{\np}$.
Then, we have the following theorem.
\begin{theorem}[\text{\cite{Hom17}, \cite[Theorem 2.37]{2019arXiv190709116S}}]
If two knots are concordant, then they are $\np$-equivalent.
Moreover, the operation $[K]_{\np} + [J]_{\np} := [K\# J]_{\np}$
endows $\mC_{\np}$ with an abelian group strucutre
so that the surjective map
\[
\mC \to \mC_{\np},\ [K] \mapsto [K]_{\np}
\]
is a group homomorphism.
\end{theorem}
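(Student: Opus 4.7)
The plan is to break the theorem into three parts: (i) concordance implies $\np$-equivalence, (ii) the binary operation $[K]_\np + [J]_\np := [K\# J]_\np$ is well-defined and gives an abelian group structure on $\mC_\np$, and (iii) the resulting map $\mC \to \mC_\np$ is a surjective homomorphism. Part (iii) is a formal consequence of (i) and (ii), so the real content lies in the first two.

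For (i), I would take the route suggested by the remark following the definition of $\np$-equivalence: it suffices to show that if $K$ and $J$ are concordant then $\nu^+(K\#-J^*) = \nu^+(-K^*\# J) = 0$. Both connected sums are slice knots (since $K\sim J$ implies $K\#-J^*$ bounds a smooth disk in $B^4$), and the Hom--Wu $\np$-invariant is known to vanish on slice knots because the $\tau$-set bound $\nu^+(K_0) \le g_4(K_0)$ forces $\nu^+ = 0$ when $g_4 = 0$. Alternatively, one can argue directly: a concordance $\Sigma$ from $K$ to $J$ decomposes as a sequence of elementary handle attachments, each of which induces a $\Z^2$-filtered chain map on $CFK^\infty$ (via the Ozsv\'ath--Szab\'o cobordism maps). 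Since the Euler characteristic of $\Sigma$ is zero, these maps preserve Maslov grading, and combining them in both directions yields $\Z^2$-filtered chain maps $f\colon CFK^\infty(K)\to CFK^\infty(J)$ and $g\colon CFK^\infty(J)\to CFK^\infty(K)$ over $\Lambda$. Each is a quasi-isomorphism because $H_*$ is $\Lambda$ and the induced map on the global homology is nonzero, hence an isomorphism by $\Lambda$-linearity and the grading.

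For (ii), the key tool is the Künneth formula for knot Floer complexes, namely $CFK^\infty(K\# J) \heq{\Z^2}{\Lambda} CFK^\infty(K)\ot{\Lambda} CFK^\infty(J)$. Given $\Z^2$-filtered quasi-isomorphisms $f\colon CFK^\infty(K)\to CFK^\infty(K')$ and $h\colon CFK^\infty(J)\to CFK^\infty(J')$ witnessing $K\nuplus K'$ and $J\nuplus J'$, the tensor product $f\otimes h$ is again a $\Z^2$-filtered chain map, and it is a quasi-isomorphism by the universal coefficient / Künneth spectral sequence applied to the map of tensor complexes. Composing with the Künneth quasi-isomorphisms on both sides, together with the analogous $g\otimes k$ in the reverse direction, shows $K\#J \nuplus K'\#J'$, giving well-definedness of $+$. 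Associativity and commutativity descend from the corresponding properties of $\#$ (equivalently, of $\otimes_\Lambda$); the unknot is the identity because $CFK^\infty(O)\heq{\Z^2}{\Lambda} \Lambda$ and tensoring by $\Lambda$ is the identity functor up to $\Z^2$-filtered homotopy; the inverse of $[K]_\np$ is $[-K^*]_\np$, since $K\#-K^*$ is slice and hence $\np$-equivalent to $O$ by part (i).

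Part (iii) is then immediate: the assignment $[K]\mapsto [K]_\np$ is well-defined because concordant knots are $\np$-equivalent by (i), surjective by construction, and a homomorphism because addition in both groups is induced by $\#$. The main obstacle will be part (i): either invoking the fact that the Ozsv\'ath--Szab\'o cobordism maps for a genus-zero concordance are filtered quasi-isomorphisms (a nontrivial input from Heegaard Floer theory that would need to be cited from Hom's or Juh\'asz's work), or, if one instead uses the $\nu^+$ route, citing the slice-bound $\nu^+(K_0)\le g_4(K_0)$ and the equivalence of the two characterizations of $\np$-equivalence mentioned in the remark.
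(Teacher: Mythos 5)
The paper does not prove this theorem; it is imported verbatim from Hom's survey and from \cite[Theorem~2.37]{2019arXiv190709116S}. Your reconstruction — the K\"unneth equivalence $CFK^\infty(K\#J)\heq{\Z^2}{\Lambda}CFK^\infty(K)\ot{\Lambda}CFK^\infty(J)$ driving well-definedness, identity, and inverses, together with the 4-genus bound $\nu^+\le g_4$ (or, equivalently, the filtered cobordism maps for a genus-zero concordance) giving concordance invariance — is precisely the standard argument underlying those cited sources, and is correct.
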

Here we also mention a partial order on $\mC_{\np}$.
For two elements $[K]_{\np},[J]_{\np} \in \mC_{\np}$,
we denote $[K]_{\np} \leq [J]_{\np}$ if
there exists a $\Z^2$-filtered quasi-isomorphism 
$f \colon CFK^\infty(K) \to CFK^\infty(J)$.
Then we see that the relation $\leq$ defines a partial order on $\mC_{\np}$. For the partial order, we have the following 4-genus bound.
\begin{theorem}[\text{\cite[Theorem 1.5]{2019arXiv190709116S}}]
\label{thm:g4-bound}
Let $g_4$ denote the 4-genus of a knot $K$. Then we have
\[
-g_4[T_{2,3}]_{\nu^+} \leq [K]_{\nu^+} \leq g_4[T_{2,3}]_{\nu^+}.
\]
\end{theorem}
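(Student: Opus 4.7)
The plan is to establish both inequalities via a cobordism-theoretic construction on $CFK^\infty$. By the duality theorem (Theorem~\ref{dual thm}), a $\Z^2$-filtered quasi-isomorphism $CFK^\infty(K) \to CFK^\infty(J)$ dualises to one $CFK^\infty(J^*) \to CFK^\infty(K^*)$, so mirroring reverses the partial order on $\mC_{\np}$. Combined with $g_4(K^*) = g_4(K)$ and $[T_{2,3}^*]_{\np} = -[T_{2,3}]_{\np}$ (since $T_{2,3}\#T_{2,3}^*$ is slice and hence concordant to the unknot), applying the upper bound to $K^*$ is equivalent to the lower bound for $K$. It therefore suffices to produce, for every knot $K$ with $g := g_4(K)$, a $\Z^2$-filtered quasi-isomorphism over $\Lambda$
\[
F \colon CFK^\infty(K) \longrightarrow CFK^\infty\bigl(\#_g\, T_{2,3}\bigr),
\]
which gives exactly $[K]_{\np} \leq g\,[T_{2,3}]_{\np}$.

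I would construct $F$ by induction on $g$. The base case $g = 0$ means $K$ is smoothly slice, and the already-stated fact that concordance implies $\np$-equivalence supplies the required quasi-isomorphism $CFK^\infty(K) \to \Lambda = CFK^\infty(\#_0\, T_{2,3})$. For the inductive step, fix a smooth, oriented, embedded genus-$g$ surface $\Sigma \subset B^4$ bounded by $K$ and, via a handle decomposition of $\Sigma$, peel off a single genus-$1$ summand adjacent to the boundary; this produces an intermediate knot $K' \subset S^3$ with $g_4(K') \leq g-1$ together with a genus-$1$ cobordism from $K$ to $K'$ in $S^3 \times [0,1]$.

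The key geometric ingredient is that each such genus-$1$ cobordism can be replaced by a smooth concordance from $K$ to $K'\#\, T_{2,3}^{\varepsilon}$ for an appropriate sign $\varepsilon \in \{+,-\}$; this is a standard 4-dimensional handle trick and underlies the classical inequality $|\tau(K)| \leq g_4(K)$. Concordance invariance then yields a $\Z^2$-filtered quasi-isomorphism $CFK^\infty(K) \to CFK^\infty(K'\#\, T_{2,3}^{\varepsilon})$, which, combined with the inductive map for $K'$ and the multiplicativity of $CFK^\infty$ under connected sum, produces $F$ after invoking Theorem~\ref{dual thm} to convert any negatively signed trefoil factors into positively signed ones.

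The principal obstacle is carrying out the geometric replacement of a genus-$1$ cobordism by ``concordance plus trefoil summand'' cleanly and tracking the signs $\varepsilon$ across all $g$ inductive stages so that the final target really reads as $\#_g\, T_{2,3}$ and not as a mixture of $T_{2,3}$ and $T_{2,3}^*$; in particular, using duality at intermediate stages has to be compatible with the $\Z^2$-filtration and the Maslov grading. Once this bookkeeping is in place, composing the chain-level quasi-isomorphisms produces $F$, proving the upper bound $[K]_{\np} \leq g_4(K)\,[T_{2,3}]_{\np}$, and by the duality reduction above, the full two-sided bound follows.
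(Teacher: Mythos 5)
The duality reduction and the overall inductive skeleton (slicing a genus-$g$ surface into $g$ genus-$1$ pieces) are reasonable starting points, but the central claim of your inductive step is false. You assert that a genus-$1$ cobordism in $S^3\times[0,1]$ from $K$ to $K'$ can be \emph{replaced by a smooth concordance} from $K$ to $K'\#\, T_{2,3}^{\varepsilon}$ for some $\varepsilon\in\{+,-\}$. Concordance is a much finer relation than genus-$1$ cobordism and there is no such replacement in general. The simplest counterexample: take $K=K'=O$ and the genus-$1$ ``stabilization'' cobordism obtained by tubing a trivial torus onto the product annulus $O\times[0,1]$. Your claim would force $O$ to be concordant to $T_{2,3}^{\varepsilon}$, i.e.\ the trefoil or its mirror would be slice, which is false. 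Relatedly, the inequality $|\tau(K)|\le g_4(K)$ is \emph{not} proved by such a replacement; its standard proofs pass through cobordism-induced maps on $\widehat{HF}$ or on filtered knot Floer complexes, tracking grading/filtration shifts, with no appeal to concordance.

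What \emph{is} true, and what the induction actually needs, is the weaker, purely $\nu^+$-level statement: a genus-$1$ cobordism from $K$ to $K'$ yields $\Z^2$-filtered quasi-isomorphisms establishing $[K]_{\nu^+}\le [K']_{\nu^+}+[T_{2,3}]_{\nu^+}$ and $[K']_{\nu^+}\le [K]_{\nu^+}+[T_{2,3}]_{\nu^+}$ (equivalently, $[K]_{\nu^+}-[K']_{\nu^+}$ is sandwiched between $\pm[T_{2,3}]_{\nu^+}$). This is a Heegaard Floer theorem about maps on $CFK^\infty$ (or their grid-homology analogues) associated to saddle moves/crossing changes; it is genuinely harder than a ``handle trick'' and cannot be reduced to a statement about smooth concordance classes. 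Once that lemma is in hand, summing across your genus-$1$ pieces and using that $g_4(K)$ equals the minimal genus of a cobordism from $K$ to a slice knot gives the upper bound directly, and your duality reduction (which is correct as written) gives the lower bound. Note also that this theorem is quoted from the second author's paper \cite{2019arXiv190709116S} rather than proved here, so the gap is in the blind attempt, not a disagreement of method with a proof in this text.
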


    \subsection{The invariant $\G_0$}
\label{the invariant G_0}
Now we recall 
the invariant $\G_0$,
which is given as an invariant of knots under
$\np$-equivalence.
\subsubsection{The invariants $\tG_0$ and $\G_0$}
\label{tG_0 and G_0}
For $C:=CFK^\infty(K)$, a cycle $x\in C$ 
is called a {\it homological generator (of degree 0)} 
if $x$ is homogeneous with $\gr(x)=0$
and the homology class $[x] \in H_0(C)\cong \F$ is non-zero.
Then, we define
$$
\tG_0(K) := \{R \in \CR(\z^2) \mid C_R \text{ contains a homological generator} \}.
$$
Any element $R \in \tG_0(K)$
is called a {\it realizable region of $K$}.
The set $\tG_0(K)$ behaves naturally with respect to 
filtered quasi-isomorphism.
\begin{theorem}[\text{\cite[Thereom~5.1]{2019arXiv190709116S}}]
\label{tG_0 ineq}
If $[K]_{\np} \leq [J]_{\np}$, then $\tG_0(K) \supset \tG_0(J)$.
\end{theorem}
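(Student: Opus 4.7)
The plan is to translate the condition ``$R \in \tG_0(?)$'' into a statement about long exact sequences and diagram chase against the commutative ladder induced by $f$. Let $C_K := CFK^\infty(K)$ and $C_J := CFK^\infty(J)$, and let $f \colon C_K \to C_J$ be a $\Z^2$-filtered quasi-isomorphism witnessing $[K]_\np \leq [J]_\np$. The key reformulation is that $R \in \tG_0(K)$ exactly when the inclusion-induced map $H_0((C_K)_R) \to H_0(C_K) \cong \F$ is surjective, and equivalently---by the long exact sequence of the pair together with global triviality $H_{-1}(C_K) = 0$---when the quotient-induced map $(p_K)_* \colon H_0(C_K) \to H_0(C_K/(C_K)_R)$ vanishes, and similarly for $J$. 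Since $f$ is $\Z^2$-filtered, it restricts to $f|_R \colon (C_K)_R \to (C_J)_R$ and descends to $\bar f \colon C_K/(C_K)_R \to C_J/(C_J)_R$, producing a commutative ladder of long exact sequences with $f_*$ as the middle vertical isomorphism.

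The chase proceeds as follows. Writing $\alpha, \beta$ for the generators of $H_0(C_K), H_0(C_J)$, commutativity together with $f_*(\alpha) = \beta$ immediately gives $\bar f_*((p_K)_*(\alpha)) = (p_J)_*(\beta) = 0$, placing $(p_K)_*(\alpha)$ inside $\ker \bar f_*$. The main obstacle, which I expect to be the hardest step, is upgrading this to $(p_K)_*(\alpha) = 0$, since $\bar f_*$ is not \emph{a priori} injective even though $f_*$ is. Concretely, given a homological generator $y \in (C_J)_R$ and any homological generator $x_0 \in C_K$ with $f(x_0) - y = \partial w$ for some $w \in C_J$ (such $w$ exists since $f_*[x_0] = \beta = [y]$), the task is to produce a cycle $x \in Z_0((C_K)_R)$ with $f(x) - y \in B_0(C_J)$; the isomorphism $f_*$ then automatically forces $[x] = \alpha$, so $x$ witnesses $R \in \tG_0(K)$. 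Equivalently, the cycle $(0, y)$ in the $\Z^2$-filtered mapping cone $\mathrm{Cone}(f|_R)$ must be shown to bound \emph{within} $\mathrm{Cone}(f|_R)$; it already bounds in the larger acyclic $\mathrm{Cone}(f)$ by the quasi-isomorphism hypothesis, and the short exact sequence $0 \to \mathrm{Cone}(f|_R) \to \mathrm{Cone}(f) \to \mathrm{Cone}(\bar f) \to 0$ combined with $H_*(\mathrm{Cone}(f)) = 0$ repackages the question as the vanishing of a specific class in $H_1(\mathrm{Cone}(\bar f))$. Closing this gap is where the specific algebraic features of $CFK^\infty$---notably the filtered basis of property~(5) and the control on modifying elements by $\partial$-boundaries while respecting the $\Z^2$-filtration---must be leveraged beyond the bare quasi-isomorphism of $f$. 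I would attack it by explicitly tracking the support of $x_0$ and $w$ in the Alexander--algebraic plane and perturbing them by boundaries coming from $C_K$ and $C_J$, using that $\bar f(\overline{x_0}) = \overline{\partial w}$ is already a boundary in $C_J/(C_J)_R$ to guide the adjustment into $(C_K)_R$ and $(C_J)_R$ respectively.
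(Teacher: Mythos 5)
Your diagram chase correctly isolates the real obstruction: $(p_K)_*(\alpha)$ lands in $\ker\bar f_*$, but $\bar f_*$ need not be injective, and you flag this as the hard step. In fact this gap cannot be closed, because with $f \colon CFK^\infty(K) \to CFK^\infty(J)$ the asserted containment is \emph{false}. Applying the paper's own \Cref{prop: G_0 deg n} (with $n=0$) to such an $f$ gives $\tG_0(K) \subset \tG_0(J)$, the opposite containment. Concretely, take $K = O$ and $J = (T_{2,3})^*$: the map $f \colon CFK^\infty(O) \to CFK^\infty((T_{2,3})^*)$ sending the generator $u$ at $(0,0)$ to the homological generator $a_0 + a_1$ supported in $R_{(-1,0)} \cup R_{(0,-1)}$ (cf.\ the proof of \Cref{prop:G_0-torus-knot}) is graded, $\Z^2$-filtered, and a quasi-isomorphism, yet $R_{(-1,0)}\cup R_{(0,-1)}$ lies in $\tG_0((T_{2,3})^*)$ but not in $\tG_0(O)$, since $(CFK^\infty(O))_{R_{(-1,0)}\cup R_{(0,-1)}}$ contains nothing of Maslov grading $0$. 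In that example $(p_K)_*(\alpha)\neq 0$, so no clever use of filtered bases or mapping cones can produce the cycle $x \in Z_0((C_K)_R)$ you are looking for.

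What has gone wrong is not your chase but the input: the displayed definition of $[K]_{\np}\leq[J]_{\np}$ in \Cref{np equivalence} has the arrow pointing the wrong way. Reading the definition as requiring a $\Z^2$-filtered quasi-isomorphism $f \colon CFK^\infty(J) \to CFK^\infty(K)$ is the orientation consistent with \Cref{prop: G_0 deg n}, with \Cref{prop:nuplus-ineq}, and with the $\tau$-formula of \Cref{thm: G_0 and others} (so that $[T_{2,3}]_{\np}\geq 0$ while $\tau(T_{2,3})=1$). Once the arrow is corrected the theorem is the same one-line element chase that proves \Cref{prop: G_0 deg n}: if $y\in(C_J)_R$ is a homological generator, then $f(y)\in f((C_J)_R)\subset(C_K)_R$ is a homological generator of $C_K$, so $R\in\tG_0(K)$; taking $R$ over all of $\tG_0(J)$ yields $\tG_0(J)\subset\tG_0(K)$. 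Your long-exact-sequence and mapping-cone scaffolding also closes immediately once $f$ points from $C_J$ to $C_K$ (injectivity of $\bar f_*$ is simply not needed in that direction), but it is far heavier machinery than the argument requires.
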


As a corollary, we have the invariance of $\tG_0$ under $\nuplus$.
Here $\mathcal{P}(\CR(\z^2))$ denotes the power set of $\CR(\z^2)$.
\begin{corollary}
\label{tG_0 invariance}
$\tG_0(K)$ is invariant under $\np$-equivalence. In particular,
\[
\tG_0 \colon [K]_{\np} \mapsto \tG_0(K)
\]
is a well-defined map
$
\mC_{\np} \to \mathcal{P}(\CR(\z^2)).
$
\end{corollary}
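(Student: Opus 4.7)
The plan is to derive the corollary as a direct two-sided application of \Cref{tG_0 ineq}. By definition, $K \nuplus J$ means there exist $\Z^2$-filtered quasi-isomorphisms $CFK^\infty(K) \to CFK^\infty(J)$ and $CFK^\infty(J) \to CFK^\infty(K)$, which, under the partial order on $\mC_{\np}$ introduced just before \Cref{thm:g4-bound}, is precisely the conjunction $[K]_{\np} \leq [J]_{\np}$ and $[J]_{\np} \leq [K]_{\np}$.

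I would then simply feed both inequalities into \Cref{tG_0 ineq}: the first yields $\tG_0(K) \supset \tG_0(J)$ and the second yields $\tG_0(J) \supset \tG_0(K)$, so $\tG_0(K) = \tG_0(J)$ as subsets of $\CR(\Z^2)$. This establishes that $\tG_0$ is constant on each $\np$-equivalence class.

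For the second assertion, well-definedness of the map $\mC_{\np} \to \mathcal{P}(\CR(\Z^2))$, $[K]_{\np} \mapsto \tG_0(K)$, follows immediately from the invariance just verified: any two representatives $K, J \in [K]_{\np}$ give the same value $\tG_0(K) = \tG_0(J)$. There is no real obstacle; the content has already been absorbed into \Cref{tG_0 ineq} and the symmetric nature of $\nuplus$, so the corollary is essentially bookkeeping.
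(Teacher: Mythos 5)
Your argument is correct and is precisely the intended one: the paper states \Cref{tG_0 invariance} as an immediate corollary of \Cref{tG_0 ineq} without further proof, and your two-sided application of that theorem, using that $K \nuplus J$ is equivalent to the conjunction $[K]_{\np} \leq [J]_{\np}$ and $[J]_{\np} \leq [K]_{\np}$, is exactly the bookkeeping being left to the reader.
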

Here we note that $\tG_0(K)$ is an infinite set for any $K$.
To extract the essential part of $\tG_0$,
we consider the minimalization of $\tG_0$.

For a subset $\mathcal{S} \subset \CR(\z^2)$,
an element $R \in \mathcal{S}$ is {\it minimal in $\mathcal{S}$}
if it satisfies
$$
\text{if } R' \in \mathcal{S} \text{ and } R' \subset R, \text{ then } R'=R.
$$
Define the map 
$$
\min \colon \mathcal{P}(\CR(\z^2)) \to \mathcal{P}(\CR(\z^2))
$$
by
$$
\mathcal{S} \mapsto \{ R \in \mathcal{S} \mid R \text{ is minimal in } \mathcal{S}\}.
$$ 
Now we define $\G_0(K)$ by 
$$
\G_0(K) := \min \tG_0(K).
$$
The invariance of $\G_0$ under 
$\nuplus$ immediately follows from 
Corollary~\ref{tG_0 invariance}. Moreover, $\G_0(K)$ has 
the following nice properties. \begin{theorem}[\text{\cite[Theorem 5.7]{2019arXiv190709116S}}]
\label{thm: G'_0}
$\G_0(K)$ 
is non-empty and finite.
\end{theorem}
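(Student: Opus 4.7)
The plan is to reduce the statement to a simple finiteness fact about the grading-zero cycles of $C := CFK^\infty(K)$. The crucial observation is that although $C$ has infinite rank over $\F$, its grading-zero part $C_0$ is finite-dimensional. Using the filtered basis $\{x_k\}_{1 \leq k \leq r}$ from axiom (5), every element of $C$ has a unique expression as a finite $\F$-linear combination $\sum_{k,m} c_{k,m} U^m x_k$, and $\gr(U^m x_k) = \gr(x_k) - 2m$. Hence for each $k$ with $\gr(x_k)$ even, a single power of $U$ produces a grading-zero monomial $U^{\gr(x_k)/2} x_k$, and other $k$ contribute nothing; thus $\dim_\F C_0 \leq r < \infty$. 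Since $H_0(C) \cong \F$ is nontrivial, the set of homological generators (grading-zero cycles representing the nonzero class in $H_0(C)$) is nonempty and finite.

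For each homological generator $x = \sum c_{k,m} U^m x_k$, I define
\[
R_x := \bigcup_{c_{k,m} \neq 0} \{(i,j) \in \Z^2 : i \leq \Alg(x_k) - m,\ j \leq \Alex(x_k) - m\},
\]
which is a finite union of quadrants and hence a closed region. I next establish the equivalence $x \in C_R \iff R_x \subseteq R$. The reverse implication is immediate. For the forward implication, suppose $x = \sum_\alpha y_\alpha$ with each $y_\alpha \in \falg{i_\alpha} \cap \falex{j_\alpha}$ and $(i_\alpha, j_\alpha) \in R$. Axiom (5) describes $\falg{i_\alpha}$ and $\falex{j_\alpha}$ as free $\F[U]$-modules with bases $\{U^{\Alg(x_k)-i_\alpha} x_k\}$ and $\{U^{\Alex(x_k)-j_\alpha} x_k\}$, so each $y_\alpha$ is an $\F$-linear combination of monomials $U^m x_k$ satisfying $\Alg(U^m x_k) \leq i_\alpha$ and $\Alex(U^m x_k) \leq j_\alpha$, whose filtration coordinates therefore lie in $R$. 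By uniqueness of the expansion of $x$ in the $\F$-basis $\{U^m x_k\}$, all nonvanishing monomials of $x$ have filtration coordinates in $R$, so $R_x \subseteq R$.

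Combining these observations,
\[
\tG_0(K) = \{R \in \CR(\Z^2) : R \supseteq R_x \text{ for some homological generator } x\},
\]
and any minimal element of $\tG_0(K)$ must itself coincide with some $R_x$. Hence
\[
\G_0(K) = \min \{R_x : x \text{ is a homological generator}\},
\]
a subset of a nonempty finite set. Since any nonempty finite family admits minimal elements with respect to inclusion, $\G_0(K)$ is nonempty and finite.

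The main obstacle in filling in this sketch is the careful bookkeeping required to verify $x \in C_R \Leftrightarrow R_x \subseteq R$, which rests essentially on the $\F[U]$-module descriptions of $\falex{j}$ and $\falg{i}$ in axiom (5); once that equivalence is in hand, the finite-dimensionality of $C_0$ converts the claim into a triviality about minimal elements of a finite poset.
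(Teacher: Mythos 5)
Your proof is correct and follows essentially the same strategy the cited reference uses. The key steps — that the degree-zero part $C_0$ of $CFK^\infty(K)$ is finite-dimensional over $\F$ (from the filtered basis axiom), that each homological generator $x$ has a well-defined closure $R_x$ (this is exactly the paper's $cl(x)$ from Section~2.3.3, characterized by the equivalence $x \in C_R \iff R_x \subseteq R$), and that minimal elements of $\tG_0(K)$ must be closures of homological generators — are the same ingredients that feed into the paper's \Cref{prop: realizer}, which is stated to follow immediately from the cited Theorem 5.7. Your bookkeeping with the $\F[U]$-module structure of $\falg{i}$ and $\falex{j}$ correctly verifies the equivalence and fills in the one nontrivial lemma; the rest is standard finite-poset reasoning.
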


\begin{theorem}[\text{\cite[Corollary 5.8]{2019arXiv190709116S}}]
\label{thm: minimalize}
For any closed region $R$, the following holds:
$$
R \in \tG_0(K) \Leftrightarrow \exists R' \in \G_0(K), 
\ R' \subset R.
$$
\end{theorem}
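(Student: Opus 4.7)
The plan is to prove the two directions separately. The easy direction $(\Leftarrow)$ is immediate: if $R' \in \G_0(K) \subseteq \tG_0(K)$ with $R' \subseteq R$, then any homological generator $x \in C_{R'}$ also lies in $C_R$ by monotonicity of the filtration, so $R \in \tG_0(K)$. No structural input is needed here.

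For the $(\Rightarrow)$ direction, the key observation is that the set $\mathcal{H}$ of homological generators in $C := CFK^\infty(K)$ is finite. By axiom (5) of Section~\ref{subsubsec: CFK}, $C$ is free of finite rank $r$ over $\Lambda = \F[U, U^{-1}]$ with a filtered basis $\{x_1, \dots, x_r\}$. Since $\gr(U^m x_k) = \gr(x_k) - 2m$, each Maslov-degree-$n$ summand $C_n$ has $\F$-dimension at most $r$; in particular $C_1$ is finite-dimensional, and hence so is $B_0(C) := \partial(C_1)$. Fixing any $x_0 \in C_0$ representing the generator of $H_0(C) \cong \F$, the set $\mathcal{H} = x_0 + B_0(C)$ is a finite affine $\F$-subspace of $C_0$. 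Moreover, the filtered-basis axiom implies that $C_R$ has $\F$-basis $\{U^m x_k : (\Alg(U^m x_k), \Alex(U^m x_k)) \in R\}$, so for each $y \in \mathcal{H}$ the smallest closed region containing $y$ is the lower set $R_y \subseteq \Z^2$ generated by the finite support of $y$. It follows that $\tG_0(K) = \{R \in \CR(\Z^2) : R \supseteq R_y \text{ for some } y \in \mathcal{H}\}$, and the collection $\{R_y\}_{y \in \mathcal{H}}$ is finite.

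Now, given $R \in \tG_0(K)$, pick $y \in \mathcal{H}$ with $R_y \subseteq R$, and choose $y' \in \mathcal{H}$ with $R_{y'}$ minimal in the finite poset $\{R_z : z \in \mathcal{H},\ R_z \subseteq R_y\}$ ordered by inclusion. For any $R'' \in \tG_0(K)$ with $R'' \subseteq R_{y'}$, the characterization above produces $y'' \in \mathcal{H}$ with $R_{y''} \subseteq R''$, forcing $R_{y''} = R_{y'}$ by minimality, and hence $R'' = R_{y'}$. Therefore $R_{y'} \in \G_0(K)$ and $R_{y'} \subseteq R$, as required. The main substantive step is establishing $|\mathcal{H}| < \infty$, which is a direct bookkeeping consequence of the filtered-basis axiom; once this is in place, the rest reduces to a routine minimal-element argument in a finite poset, and the same argument in fact simultaneously recovers Theorem~\ref{thm: G'_0}.
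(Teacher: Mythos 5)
Your proof is correct. The paper does not supply its own argument for this statement (it cites \cite[Corollary~5.8]{2019arXiv190709116S}), so I can only assess your reasoning on its merits, but the structure is sound and fits the surrounding machinery well. The decisive step is the finiteness of $\mathcal{H} = x_0 + \partial(C_1)$, and you justify it correctly: axiom (5) gives a filtered basis $\{x_k\}_{k=1}^r$, the $\F$-basis $\{U^m x_k\}$ of $C$ is homogeneous with $\gr(U^m x_k) = \gr(x_k) - 2m$, so $\dim_\F C_n \le r$ for each $n$. This is precisely the structural property the introduction emphasizes as distinguishing $CFK^\infty(K)$ from the combinatorial complex $C^\infty(G)$. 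Your identification of $C_R$ with the $\F$-span of $\{U^m x_k \mid (\Alg, \Alex)(U^m x_k) \in R\}$ matches the decomposition invoked in the proof of \Cref{lem: CFK^- 1} (quoting \cite[Lemma~2.9]{2019arXiv190709116S}), which gives the characterization $\tG_0(K) = \{R \mid R \supseteq cl(y) \text{ for some } y \in \mathcal{H}\}$ and hence $\G_0(K) \subseteq \{cl(y)\}_{y \in \mathcal{H}}$. The minimal-element argument in the resulting finite poset is routine and correctly shows that any $R \in \tG_0(K)$ lies above some $R_{y'} \in \G_0(K)$. Your closing remark that this simultaneously yields \Cref{thm: G'_0} is also accurate and is consistent with the referenced paper's numbering, where the present statement appears as a corollary of the finiteness theorem; your single self-contained argument for both is arguably cleaner than a two-stage derivation.
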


\begin{theorem}[\text{\cite[Theorem 5.16]{2019arXiv190709116S}}]
\label{thm: detect zero}
For any knot $K$,
the following holds:
$$
[K]_{\np} = 0 \Leftrightarrow \G_0(K) = \{ R_{(0,0)}\},
$$
where $R_{(0,0)} := \{(i,j)\in \Z^2 \mid (i, j) \leq (0,0) \}$.
\end{theorem}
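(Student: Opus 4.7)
The plan is to prove the two implications separately, both anchored on the computation $\G_0(O) = \{R_{(0,0)}\}$. Since $CFK^\infty(O) \cong \Lambda$ is freely generated by a single element at grading $0$ and position $(0,0)$, a closed region $R$ contains a homological generator if and only if $(0,0) \in R$, i.e.\ $R \supset R_{(0,0)}$. Hence $\tG_0(O) = \{R \mid R \supset R_{(0,0)}\}$ and $\G_0(O) = \{R_{(0,0)}\}$. The ($\Rightarrow$) direction is then immediate: assuming $[K]_{\np} = 0$, the invariance of $\G_0 = \min \tG_0$ under $\np$-equivalence (inherited from \Cref{tG_0 invariance}, since $\min$ is well-defined on subsets of $\CR(\z^2)$) gives $\G_0(K) = \G_0(O) = \{R_{(0,0)}\}$.

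For the ($\Leftarrow$) direction, assume $\G_0(K) = \{R_{(0,0)}\}$ and set $C := CFK^\infty(K)$. I would exhibit two $\z^2$-filtered, graded, $\Lambda$-linear quasi-isomorphisms $g \colon CFK^\infty(O) \to C$ and $f \colon C \to CFK^\infty(O)$, establishing $K \nuplus O$. The map $g$ is direct: by \Cref{thm: minimalize}, $R_{(0,0)} \in \tG_0(K)$, so pick a cycle $x \in C$ with $\gr(x) = 0$, filtration $\leq (0,0)$, and $[x] \neq 0$; the unique $\Lambda$-linear extension of $1 \mapsto x$ is filtered, graded, and a quasi-isomorphism, since both sides have homology $\cong \Lambda$ generated in degree $0$.

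The crux is $f$. My strategy is to produce a cocycle $\varphi \in C^* := \Hom_\Lambda(C, \Lambda)$ at grading $0$, filtration $\leq (0,0)$, with $[\varphi] \neq 0$ in $H_0(C^*) \cong \F$; by \Cref{dual thm} this gives $R_{(0,0)} \in \tG_0(K^*)$, the $g$-construction applied to $K^*$ yields $g' \colon CFK^\infty(O) \to CFK^\infty(K^*)$, and dualizing $g'$ (applying \Cref{dual thm} twice with $O^* = O$ and $K^{**} = K$) produces $f$. To obtain $\varphi$, I would exploit the lower set $R := \{(i,j) \in \z^2 \mid i \leq -1 \text{ or } j \leq -1\}$: since $(0,0) \notin R$, \Cref{thm: minimalize} forces $R \notin \tG_0(K)$, so no cycle in $C_R = \falex{-1} + \falg{-1}$ at grading $0$ represents the generator of $H_0(C)$. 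A brief chase on $0 \to C_R \to C \to C/C_R \to 0$ then shows $x \notin B_0 + (C_R)_0$, where $B_0 := \partial(C_1)$. Hence one can choose an $\F$-linear functional $\tilde f \colon C_0 \to \F$ with $\tilde f(x) = 1$ vanishing on $B_0 + (C_R)_0$. By the cochain isomorphism of \Cref{lem: dual epsilon}, $\tilde f$ lifts to a cocycle $\varphi \in C^*_0$ with $[\varphi] \neq 0$, and its vanishing on $(\falex{-1} + \falg{-1}) \cap C_0$ translates via the dual-filtration formulas of \Cref{subsubsec: CFK} into $\varphi \in \falex{0}(C^*) \cap \falg{0}(C^*)$, as required.

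The main obstacle I anticipate is identifying the right ``off-quadrant'' lower set $R$. It is not a sub-region of $R_{(0,0)}$, so its non-realizability is not among the conclusions one might first look for when using the minimality hypothesis; recognizing that the non-realizability of this particular $R$ is precisely what translates $\G_0(K) = \{R_{(0,0)}\}$ into the dual-side condition $R_{(0,0)} \in \tG_0(K^*)$ is the step requiring genuine insight. The remaining work---verifying that $g$ is a filtered quasi-isomorphism, dualizing via \Cref{dual thm}, and combining $f$ with $g$ to conclude $K \nuplus O$---is routine bookkeeping.
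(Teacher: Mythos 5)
The paper does not prove this theorem; it cites it directly from \cite[Theorem 5.16]{2019arXiv190709116S}, so there is no in-paper proof to compare against. Evaluated on its own terms, your argument is correct. The computation $\G_0(O) = \{R_{(0,0)}\}$ and the $(\Rightarrow)$ direction via $\np$-invariance are immediate, as is the construction of $g$. For the $(\Leftarrow)$ direction, your key step is right: the off-quadrant lower set $R = \{(i,j) \mid i\le -1 \text{ or } j\le -1\}$ satisfies $C_R = \falex{-1}+\falg{-1}$ and $R_{(0,0)}\not\subset R$, so \Cref{thm: minimalize} excludes $R$ from $\tG_0(K)$, which forces a homological generator $x$ to lie outside $B_0 + (C_R)_0$ and permits the construction of the functional $\tilde f$; the passage to a cocycle $\varphi \in C^*_0 \cap \falex{0}(C^*)\cap\falg{0}(C^*)$ via \Cref{lem: dual epsilon} then closes the loop.

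One simplification worth noting: the element $\varphi \in \Hom_\Lambda(C,\Lambda)=C^*$ that you construct is already itself a $\Lambda$-linear chain map $C\to\Lambda$ (a cocycle in $C^*$ is precisely a chain map to $\Lambda$ with its trivial differential), and the dual-grading and dual-filtration formulas of \Cref{subsubsec: CFK} unwind to say that $\varphi \in C^*_0 \cap \falg{0}(C^*)\cap\falex{0}(C^*)$ if and only if $\varphi$ is graded and $\Z^2$-filtered as a map $C\to\Lambda$. Moreover, since $\varepsilon\circ\varphi(x)=1$ forces $\varphi(x)$ to be a generator of $H_0(\Lambda)$, it is a quasi-isomorphism. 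So you may simply set $f := \varphi$, skipping the passage through $K^*$, the second application of the $g$-construction, and the double dualization; this also avoids having to verify that dualizing a $\Z^2$-filtered quasi-isomorphism produces one (true, via \Cref{dual complex and region} and \Cref{lem: dual epsilon}, but extra bookkeeping).
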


Here we also mention the relationship of $\G_0$ to filtered quasi-isomorphism.

\begin{proposition}[\text{\cite[Proposotion 5.9]{2019arXiv190709116S}}] \label{prop:nuplus-ineq}
If $[K]_{\np} \leq [J]_{\np}$, then for any $R' \in \G_0(J)$, there
exists an element $R \in \G_0(K)$ with $R \subset R'$.
\end{proposition}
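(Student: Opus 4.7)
The plan is to derive this as an almost immediate corollary of the two results that have just been stated, namely \Cref{tG_0 ineq} and \Cref{thm: minimalize}. The hypothesis $[K]_{\np} \leq [J]_{\np}$ is exactly the existence of a $\Z^2$-filtered quasi-isomorphism $CFK^\infty(K) \to CFK^\infty(J)$, so \Cref{tG_0 ineq} applies directly and yields the containment $\tG_0(K) \supset \tG_0(J)$ of sets of realizable regions.

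Now I would start from an arbitrary element $R' \in \G_0(J)$. By the definition $\G_0(J) = \min \tG_0(J)$, we have $R' \in \tG_0(J)$, and then the containment above gives $R' \in \tG_0(K)$. At this point \Cref{thm: minimalize}, applied to the knot $K$ and the closed region $R'$, produces an element $R \in \G_0(K)$ with $R \subset R'$, which is exactly the conclusion we need.

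Since both of the inputs, \Cref{tG_0 ineq} and \Cref{thm: minimalize}, are already available in the paper as black boxes, there is really no substantive obstacle in this proof; the only task is to chain them together correctly and to observe the elementary set-theoretic fact $\min \mathcal{S} \subset \mathcal{S}$ used implicitly when passing from $R' \in \G_0(J)$ to $R' \in \tG_0(J)$. In particular, no further analysis of the filtered quasi-isomorphism $f$ itself is needed, and no separate minimalization argument on $K$ is required beyond what \Cref{thm: minimalize} already provides.
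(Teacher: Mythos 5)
Your proof is correct and is the natural way to derive the statement from \Cref{tG_0 ineq} and \Cref{thm: minimalize}: unwinding $\G_0 = \min \tG_0$, applying the inclusion $\tG_0(K) \supset \tG_0(J)$, and then invoking \Cref{thm: minimalize} for $K$. The paper merely cites this result from \cite{2019arXiv190709116S} without reproducing a proof, but your chain of deductions matches the intended logic and leaves no gaps.
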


Next, as a new result on the invariant $\G_0$, we state the duality theorem for $\G_0$ which implies that $\G_0(K)$
and $\G_0(K^*)$ can recover each other.
Here, for a subset $S \subset \Z^2$, we define the subset 
$-S \subset \Z^2$  by 
    \[
        -S := \{ (i, j) \in \Z^2 \mid (-i, -j) \in S \}.
    \]
\begin{theorem}
\label{thm: duality}
For any knot $K$, the equalities
$$
\tG_0(K^*) = \{ R \in \CR(\z^2) \mid 
\forall R' \in \G_0(K),\ R \cap (-R') \neq \varnothing \}
$$
and
$$
\G_0(K^*) = \min\{ R \in \CR(\z^2) \mid 
\forall R' \in \G_0(K),\ R \cap (-R') \neq \varnothing \}
$$
hold. 
\end{theorem}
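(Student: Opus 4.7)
The plan is to reinterpret the right-hand side using a natural involution on closed regions, then establish the resulting equivalence via a pairing argument between $C := CFK^\infty(K)$ and its dual $C^*$.

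First, for a closed region $R \in \CR(\Z^2)$, set $R^\vee := -(\Z^2 \setminus R)$; one checks that $R^\vee \in \CR(\Z^2)$ and that $R \cap (-R') = \varnothing \iff R' \subset R^\vee$ for any $R' \subset \Z^2$. Since $\tG_0(K)$ is clearly closed under passing to supersets, Theorem~\ref{thm: minimalize} implies the condition ``$R \cap (-R') \neq \varnothing$ for all $R' \in \G_0(K)$'' is equivalent to $R^\vee \notin \tG_0(K)$. Hence it suffices to prove the equivalence
\[
R \in \tG_0(K^*) \iff R^\vee \notin \tG_0(K).
\]

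By Theorem~\ref{dual thm}, $\tG_0(K^*)$ can be computed from $C^*$. Using the filtration formulas on $C^*$ from Section~\ref{subsubsec: CFK} together with the fact that the $\F$-dual basis of $\{U^m x_k\}$ in $C^*$ sits at the reflected $\Z^2$-positions, a computation gives
\[
(C^*)_R = \left\{\varphi \in C^* \ \middle|\ \varepsilon \circ \varphi(C_{R^\vee}) = \{0\}\right\}.
\]
Granting this, the forward implication is immediate: if $R^\vee \in \tG_0(K)$, pick a homological generator $z \in (C_{R^\vee})_0$; for any cocycle $\varphi \in ((C^*)_R)_0$ we have $\varepsilon \circ \varphi(z) = 0$, and by Lemma~\ref{lem: dual epsilon} the class $[\varphi] \in H_0(C^*)$ is identified with the functional $[z'] \mapsto \varepsilon \circ \varphi(z')$ on $H_0(C) \cong \F$, which therefore vanishes on the generator $[z]$; hence $[\varphi] = 0$.

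The reverse implication is the main obstacle. Assuming $R^\vee \notin \tG_0(K)$, we have $\ker\partial|_{C_0} \cap (C_{R^\vee})_0 \subset \partial(C_1)$; a short argument (if $z \in \ker\partial|_{C_0}$ decomposes as $b + c$ with $b \in \partial(C_1)$ and $c \in (C_{R^\vee})_0$, then $c$ is a cycle in $C_{R^\vee}$ and thus a boundary, forcing $z \in \partial(C_1)$) combined with $H_0(C) \cong \F \neq 0$ then shows $\ker\partial|_{C_0} \not\subset \partial(C_1) + (C_{R^\vee})_0$. Choose an $\F$-linear $\psi \colon C_0 \to \F$ vanishing on $\partial(C_1) + (C_{R^\vee})_0$ and taking the value $1$ on some homological generator. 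Using a filtered basis $\{x_k\}$, lift $\psi$ (extended by zero off $C_0$) to a homogeneous $\Lambda$-linear $\varphi \in C^*_0$ by setting $\varphi(x_k) := \psi(U^{\gr(x_k)/2} x_k) \cdot U^{-\gr(x_k)/2}$ when $\gr(x_k)$ is even, and $\varphi(x_k) := 0$ otherwise; this construction forces $\varepsilon \circ \varphi = \psi$. The filtration condition $\varphi \in (C^*)_R$ follows from the displayed equality, and the cocycle condition $\varphi \circ \partial = 0$ reduces, via $U$-equivariance and the fact that $\Lambda_n = 0$ for $n$ odd, to $\psi(\partial(C_1)) = 0$, which holds by choice of $\psi$. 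Finally Lemma~\ref{lem: dual epsilon} certifies $[\varphi] \neq 0$ in $H_0(C^*)$, completing the equivalence. The identity $\G_0(K^*) = \min\{\cdots\}$ follows immediately from the first equality by applying the operator $\min$ to both sides.
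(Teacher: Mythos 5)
Your proposal is correct and follows essentially the same route as the paper: reformulate the right-hand side via the dual region $R^\vee$ (the paper's $R^* = \Z^2 \setminus (-R)$, which coincides with your $-(\Z^2 \setminus R)$), establish the key identity $C^*_R = \{\varphi \mid \varepsilon\circ\varphi(C_{R^*}) = 0\}$ (the paper's Proposition~\ref{dual complex and region}), and then conclude by a pairing argument through Lemma~\ref{lem: dual epsilon}. The only cosmetic difference is in the reverse implication, where you build the lift $\varphi$ explicitly from a filtered basis, whereas the paper obtains it by invoking the bijectivity of $\varepsilon_0$ from Lemma~\ref{lem: dual epsilon} applied to a cocycle on the quotient $C/C_{R^*}$; these are the same construction spelled out at different levels of abstraction.
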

\begin{remark}
This equality holds for any formal knot complex. Namely, we can prove the theorem purely algebraically.
\end{remark}
\begin{remark}
Combining with \Cref{thm: minimalize}, we can also prove the equality 
\[
\tG_0(K^*) = \{ R \in \CR(\z^2) \mid 
\forall R' \in \tG_0(K),\ R \cap (-R') \neq \varnothing \},
\]
which looks more symmetric with respect to $\tG_0$.
\end{remark}

\Cref{thm: duality} is proved in Appendix~\ref{sec: duality},
where we also provide a method for computing $\G_0(K^*)$ from $\G_0(K)$ algorithmically.

\subsubsection{Relationship to other concordance invariants}
Here we state the relationship of $\G_0(K)$ to the invariants
$\tau(K)$, $d(S^3_{p/q}(K))$ and $\Upsilon_K$.
We first explain the translation of 
$\{d(S^3_{p/q}(K))\}_{p/q \in \Q_{>0}}$
into 
{\it Ni-Wu's $V_k$-sequence}
$\{V_k(K)\}_{k \in \z_{\geq 0}}$ \cite{NW15}.
Note that there is a canonical identification between the set
of $\Spin^c$ structures over $S^3_{p/q}(K)$ 
and $\{ i \mid  0 \leq i \leq p-1\}$. This identification can be
made explicit by the procedure in \cite[Section 4, Section 7]{OS11rational}.
Let $d(S^3_{p/q}(K), i)$ denote the correction term of $S^3_{p/q}(K)$
with  the $i$-th $\Spin^c$ structure ($0 \leq i \leq p-1$). 
\begin{proposition}[\text{\cite[Proposition~1.6]{NW15}}]
\label{V_k d}
For any knot $K$, $p/q > 0$ and $0 \leq i \leq p-1$, 
the equality
$$
d(S^3_{p/q}(K),i) = d(S^3_{p/q}(O),i) - 2 
\max \left\{ V_{\lfloor \frac{i}{q} \rfloor}(K), V_{\lfloor \frac{p+q -1-i}{q} \rfloor}(K) \right\}
$$
holds,
where $O$ denotes the unknot and $\lfloor \cdot \rfloor$ is the floor function.
\end{proposition}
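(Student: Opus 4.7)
The plan is to deduce this formula from the Ozsv\'ath--Szab\'o rational surgery formula, which expresses $HF^+(S^3_{p/q}(K),i)$ as the homology of an explicit mapping cone built from the standard subquotients $A^+_s$ and $B^+$ of $CFK^\infty(K)$. Once this cone is in hand, $d(S^3_{p/q}(K),i)$ is read off as $-2$ times the bottom grading of the infinite $U$-tower, and the $V_k(K)$ enter as the $U$-depths required to push that tower through the cone maps.

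First, I would set up the surgery cone. For each $s\in\Z$ there are chain maps $v^+_s,\, h^+_s\colon A^+_s\to B^+$, and the rational surgery complex is assembled from finitely many copies of these pieces according to the continued-fraction expansion of $p/q$. For the $\Spin^c$ structure labelled by $i\in\{0,\dots,p-1\}$, $HF^+(S^3_{p/q}(K),i)$ is the homology of the summand of this cone corresponding to $i$.

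Second, I would use the characterization of $V_s(K)$ as the $U$-depth of $(v^+_s)_*$ restricted to the infinite $U$-tower, together with the reflection symmetry of $CFK^\infty(K)$ (property 6 of the axioms above), which rewrites the analogous $U$-depth of $(h^+_s)_*$ as another value in the same sequence $V_k(K)$ after an index shift. Tracking the surviving $U$-tower through the cone, the bottom of the tower at the $B^+$-summand labelled by $i$ is controlled by the deeper of the two adjacent $A^+_s$ contributions, producing a max. Comparing with the unknot, for which $V_k(O)\equiv 0$ and the cone becomes purely combinatorial, fixes the base term $d(S^3_{p/q}(O),i)$, and the subtraction of $2\max\{\cdot,\cdot\}$ is precisely the resulting grading shift.

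Third, I would pin down the arithmetic of the $\Spin^c$ labelling. In the identification via the procedure of \cite{OS11rational}, the two $A^+_s$ summands adjacent to the $B^+$-summand labelled by $i$ are, after rewriting the $h$-side in terms of $V_k(K)$, those with $s=\lfloor i/q\rfloor$ and $s=\lfloor (p+q-1-i)/q\rfloor$. The main obstacle will be this combinatorial bookkeeping: correctly matching $i$ with its two neighbors in the cone and carrying out the index shift on the $h$-side so that only non-negative indices remain. Once that is done correctly, the formula is a direct consequence of tracking the $U$-tower.
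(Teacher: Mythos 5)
This proposition is not proved in the paper at all: it is imported verbatim as a citation to Ni--Wu's Proposition~1.6 of \cite{NW15}, and the authors use it as a black box. So there is no ``paper's proof'' to compare your sketch against; what you are outlining is, in effect, a reconstruction of Ni--Wu's original argument, and the overall strategy you describe — the Ozsv\'ath--Szab\'o rational surgery mapping cone built from $A^+_s$ and $B^+$, reading off the $d$-invariant from the bottom of the surviving $U$-tower, using $V_s$ as the $U$-depth of $v^+_s$ and the reflection symmetry to rewrite the $h$-side depth as a $V$-value — does match how Ni--Wu proceed.

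There is, however, one concrete gap in the reasoning. You describe a single ``$B^+$-summand labelled by $i$'' flanked by exactly two $A^+_s$'s, with the max coming from ``the deeper of the two adjacent $A^+_s$ contributions.'' That picture is an oversimplification of the surgery cone: for a given $\Spin^c$ structure $i$ on $S^3_{p/q}(K)$, the truncated cone is a zig-zag containing in general several $A^+_s$ and several $B^+$ pieces (the number grows as $g_3(K)$ grows relative to $p$), so a priori the $d$-invariant is governed by a max over \emph{all} the $V$- and $H$-depths occurring along the chain, not just two. The step that collapses this to a max of exactly two terms — $V_{\lfloor i/q\rfloor}$ at one end and, after converting $H_s$ to $V_{-s}$, $V_{\lfloor (p+q-1-i)/q\rfloor}$ at the other — is the monotonicity of the sequences $V_s$ (non-increasing in $s$) and $H_s$ (non-decreasing in $s$). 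Your sketch never invokes this monotonicity, and without it the ``max of two'' is unjustified; this is the real content of Ni--Wu's argument, not merely $\Spin^c$ bookkeeping as you suggest. Once you add the monotonicity lemma, the rest of the sketch is sound.
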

Since we have the orientation-preserving diffeomorphism
\[
S^3_{-p/q}(K) \cong -S^3_{p/q}(K^*),
\]
Proposition~\ref{V_k d} shows that 
the two sequences $\{V_k(K)\}$ and $\{V_k(K^*)\}$ determine
all values of $\{d(S^3_{p/q}(K))\}_{p/q \in \Q}$.

Here we also need to introduce several classes of closed regions.
For each coordinate $(k,l) \in \z^2$, 
we define the {\it simple region $R_{(k,l)}$
with the  corner} $(k,l)$
by
\[
R_{(k,l)}:=\{(i,j) \in \z^2 \mid i \leq k \text{ and } j \leq l \}.
\]
In addition, for each $t \in [0,2]$ and $s \in \R$, we define 
the {\it linear region $R^t(s)$ with respect to $(t,s)$} by
\[
R^t(s) := \left\{(i,j) \in \z^2 \ \middle|\  (1-\frac{t}{2})i + \frac{t}{2}j \leq s \right\}.
\]

Now, the relationship of $\G_0(K)$ to the invariants
$\tau(K)$, $V_k(K)$ and $\Upsilon_K$ is stated as follows.
\begin{theorem}[\text{\cite[Proposition 5.17]{2019arXiv190709116S}}]
\label{thm: G_0 and others}
The invariants $\tau(K)$, $V_k(K)$ and 
$\Upsilon_K$ are determined from
$\G_0(K)$ by the formulas:
\begin{eqnarray*}
\tau(K)&=&\min\{m \in \z \mid 
\exists R \in \G_0(K), R \subset
(\{ i \leq -1\} \cup R_{(0,m)})   \}\\
V_k(K)&=&\min\{m \in \z_{\geq 0} \mid  
\exists R \in \G_0(K), R \subset R_{(m,k+m)} \}\\
\Upsilon_K(t)&=&-2 \left( \min\{s \in \R \mid \exists R \in \G_0(K), R \subset
 R^t(s)\} \right),
\end{eqnarray*}
\end{theorem}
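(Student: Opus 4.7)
The plan is to recognize each of the three formulas as a direct translation of the classical Heegaard Floer definition of the corresponding invariant into the language of realizable regions, and then to invoke \Cref{thm: minimalize} to pass from $\tG_0(K)$ to $\G_0(K)$.

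The first step is to identify, for each invariant, the closed region $R \subset \Z^2$ so that $C_R$ coincides with the subcomplex appearing in the classical definition. For $\tau(K)$, the Ozsv\'ath--Szab\'o definition characterizes $\tau(K)$ as the smallest integer $m$ such that the induced map $H_0(\falg{0} \cap \falex{m} + \falg{-1}) \to H_0(C)$ is surjective; one checks that $\falg{0} \cap \falex{m} + \falg{-1} = C_R$ for $R = \{i \leq -1\} \cup R_{(0,m)}$, and that the surjectivity condition is equivalent to $C_R$ containing a homological generator of $C$ (using that the inclusion $\falg{0} \hookrightarrow C$ induces an isomorphism on $H_0 \cong \F$). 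For $V_k(K)$, Ni--Wu's reformulation says $V_k(K)$ is the smallest $m \geq 0$ such that $C_{R_{(m, k+m)}} = \falg{m} \cap \falex{k+m}$ contains a homological generator of $C$. For $\Upsilon_K(t)$, Livingston's reformulation gives $-\Upsilon_K(t)/2 = \min\{s \in \R \mid C_{R^t(s)} \text{ contains a homological generator}\}$, with the minimum attained at some $s = (1-t/2)i + (t/2)j$ coming from a lattice point $(i,j) \in \Z^2$, so that the minimum is well-defined.

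The second step is purely formal. By the defining property of $\tG_0(K)$, each of the above identifications can be rephrased as "the minimum $m$ (or $s$) such that the associated closed region lies in $\tG_0(K)$". Applying \Cref{thm: minimalize} converts membership in $\tG_0(K)$ into the condition that some element of $\G_0(K)$ is contained in the associated region, yielding the three stated formulas exactly.

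The main obstacle lies in verifying the classical identifications of the first step. For $\tau$ and $V_k$ these reductions are largely contained in \cite{OS03tau, NW15}, though the translation into the poset-filtration language requires some care with the short exact sequences relating $\falg{0}$, $\falg{-1}$, and $\widehat{CFK}(K)$. The reformulation for $\Upsilon_K(t)$ is more delicate because the filtration interpolates continuously in $t \in [0,2]$; one must check that the piecewise-linear behavior of $\Upsilon_K$ corresponds exactly to the discrete set of breakpoints determined by the finite set $\G_0(K)$, and that the infimum over $s \in \R$ in Livingston's definition is indeed realized at a lattice point so that the $\min$ in the stated formula makes sense.
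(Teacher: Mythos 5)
The paper does not prove this statement; it is quoted verbatim from \cite[Proposition 5.17]{2019arXiv190709116S}, so there is no in-paper argument for me to compare against. Your outline is nevertheless the natural one and is almost certainly the route taken in the cited reference: rewrite each classical definition as ``a specific closed region lies in $\tG_0(K)$'' and then convert membership in $\tG_0(K)$ into the existence of a smaller element of $\G_0(K)$ via \Cref{thm: minimalize}.

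Two remarks on the remaining gaps you flag. For $\tau$, the identification of $C_R$ for $R = \{i\leq -1\}\cup R_{(0,m)}$ with $\falg{-1}+(\falg{0}\cap\falex{m})$ is indeed immediate; the nontrivial step is passing between ``this subcomplex contains a degree-$0$ homological generator of $C$'' and Ozsv\'ath--Szab\'o's original condition on $\falex{m}(\widehat{CFK})\hookrightarrow\widehat{CFK}$. This requires the long exact sequence of the pair $(\falg{0},\falg{-1})$ and the vanishing $H_0(\falg{-1})=H_{-1}(\falg{-1})=0$ (which follows from the seventh axiom, since $\mathcal{F}^{\Alg}_{-1}(\Lambda) = U\F[U]$ has no nonzero element in degrees $0$ or $-1$); both injectivity of $H_0(\falg{0})\to H_0(\widehat{CFK})$ and the ability to correct a lift $x$ of a $\widehat{CFK}$-cycle by an element of $\falg{-1}$ come from this. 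You should make that explicit, since it is the only genuine homological-algebra content of the $\tau$ case. For $\Upsilon$, by contrast, your concern about matching breakpoints of the PL function with those dictated by $\G_0(K)$ is superfluous: the stated formula is a pointwise identity in $t\in[0,2]$, and once one knows it holds for each $t$, the PL structure of $\Upsilon_K$ is a consequence rather than a hypothesis. The only thing to check there is that the infimum over $s$ is attained, which follows because $\G_0(K)$ is finite and non-empty (\Cref{thm: G'_0}) and for each $R\in\G_0(K)$ the smallest $s$ with $R\subset R^t(s)$ equals $\max_{(i,j)\in c(R)}\bigl((1-\tfrac{t}{2})i+\tfrac{t}{2}j\bigr)$, a maximum over the finite set of corners. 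With those two adjustments the proof is sound.
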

Figure~\ref{regions for concordance invariants}
describes the family of closed regions corresponding to each concordance invariant.
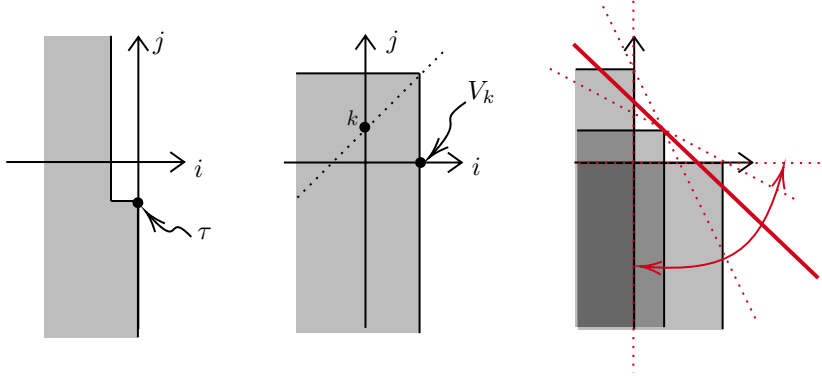
\begin{figure}[ht]
    \centering
    \tikzset{every picture/.style={line width=0.75pt}} 

\begin{tikzpicture}[x=0.75pt,y=0.75pt,yscale=-1,xscale=1]

\draw  (20.5,85.36) -- (110.21,85.36)(87.11,22.24) -- (87.11,169.7) (103.21,80.36) -- (110.21,85.36) -- (103.21,90.36) (82.11,29.24) -- (87.11,22.24) -- (92.11,29.24)  ;
\draw    (86.88,105.03) -- (86.88,173.82) ;

\draw  [draw opacity=0][fill={rgb, 255:red, 0; green, 0; blue, 0 }  ,fill opacity=0.26 ] (39.71,21.66) -- (73.31,21.66) -- (73.31,173.82) -- (39.71,173.82) -- cycle ;
\draw  [draw opacity=0][fill={rgb, 255:red, 0; green, 0; blue, 0 }  ,fill opacity=0.26 ] (73.31,105.03) -- (86.88,105.03) -- (86.88,173.82) -- (73.31,173.82) -- cycle ;
\draw    (73.31,21.66) -- (73.31,105.03) ;

\draw    (73.31,105.03) -- (86.88,105.03) ;

\draw  (160.5,85.61) -- (250.2,85.61)(201.65,21.61) -- (201.65,169.08) (243.2,80.61) -- (250.2,85.61) -- (243.2,90.61) (196.65,28.61) -- (201.65,21.61) -- (206.65,28.61)  ;
\draw    (228.94,40.61) -- (228.94,171.73) ;

\draw  [draw opacity=0][fill={rgb, 255:red, 0; green, 0; blue, 0 }  ,fill opacity=0.26 ] (166.67,40.61) -- (228.94,40.61) -- (228.94,171.73) -- (166.67,171.73) -- cycle ;
\draw    (166.67,40.61) -- (228.94,40.61) ;

\draw  [dash pattern={on 0.84pt off 2.51pt}]  (240.75,29.89) -- (166.33,104.31) ;

\draw  [fill={rgb, 255:red, 0; green, 0; blue, 0 }  ,fill opacity=1 ] (199.08,67.79) .. controls (199.08,66.56) and (200.08,65.56) .. (201.31,65.56) .. controls (202.54,65.56) and (203.54,66.56) .. (203.54,67.79) .. controls (203.54,69.02) and (202.54,70.02) .. (201.31,70.02) .. controls (200.08,70.02) and (199.08,69.02) .. (199.08,67.79) -- cycle ;
\draw  [fill={rgb, 255:red, 0; green, 0; blue, 0 }  ,fill opacity=1 ] (227.2,85.62) .. controls (227.2,84.39) and (228.2,83.39) .. (229.43,83.39) .. controls (230.66,83.39) and (231.66,84.39) .. (231.66,85.62) .. controls (231.66,86.85) and (230.66,87.85) .. (229.43,87.85) .. controls (228.2,87.85) and (227.2,86.85) .. (227.2,85.62) -- cycle ;
\draw  [fill={rgb, 255:red, 0; green, 0; blue, 0 }  ,fill opacity=1 ] (84.66,105.88) .. controls (84.66,104.65) and (85.65,103.65) .. (86.88,103.65) .. controls (88.12,103.65) and (89.11,104.65) .. (89.11,105.88) .. controls (89.11,107.11) and (88.12,108.11) .. (86.88,108.11) .. controls (85.65,108.11) and (84.66,107.11) .. (84.66,105.88) -- cycle ;
\draw    (113.78,123.18) .. controls (100.34,109.06) and (110.09,131.79) .. (91.95,111.62) ;
\draw [shift={(90.8,110.33)}, rotate = 408.69] [color={rgb, 255:red, 0; green, 0; blue, 0 }  ][line width=0.75]    (10.93,-3.29) .. controls (6.95,-1.4) and (3.31,-0.3) .. (0,0) .. controls (3.31,0.3) and (6.95,1.4) .. (10.93,3.29)   ;

\draw    (252.29,55.09) .. controls (235.05,67.2) and (254.68,61) .. (234.7,82.07) ;
\draw [shift={(233.43,83.39)}, rotate = 314.24] [color={rgb, 255:red, 0; green, 0; blue, 0 }  ][line width=0.75]    (10.93,-3.29) .. controls (6.95,-1.4) and (3.31,-0.3) .. (0,0) .. controls (3.31,0.3) and (6.95,1.4) .. (10.93,3.29)   ;

\draw  (307.01,85.61) -- (396.72,85.61)(337.19,22.3) -- (337.19,169.77) (389.72,80.61) -- (396.72,85.61) -- (389.72,90.61) (332.19,29.3) -- (337.19,22.3) -- (342.19,29.3)  ;
\draw    (337.05,38.56) -- (337.05,168.73) ;

\draw  [draw opacity=0][fill={rgb, 255:red, 0; green, 0; blue, 0 }  ,fill opacity=0.26 ] (307.7,38.56) -- (337.05,38.56) -- (337.05,168.73) -- (307.7,168.73) -- cycle ;
\draw    (307.7,38.56) -- (337.05,38.56) ;

\draw    (352.28,69.42) -- (352.28,168.73) ;

\draw  [draw opacity=0][fill={rgb, 255:red, 0; green, 0; blue, 0 }  ,fill opacity=0.26 ] (308.38,69.42) -- (352.28,69.42) -- (352.28,168.73) -- (308.38,168.73) -- cycle ;
\draw    (308.38,69.42) -- (352.28,69.42) ;

\draw    (381.78,85.88) -- (381.78,169.73) ;

\draw  [draw opacity=0][fill={rgb, 255:red, 0; green, 0; blue, 0 }  ,fill opacity=0.26 ] (309.07,85.88) -- (381.78,85.88) -- (381.78,169.73) -- (309.07,169.73) -- cycle ;
\draw    (309.07,85.88) -- (381.78,85.88) ;

\draw [color={rgb, 255:red, 208; green, 2; blue, 27 }  ,draw opacity=1 ][line width=1.5]    (306.56,25.81) -- (430.01,143.95) ;

\draw [color={rgb, 255:red, 208; green, 2; blue, 27 }  ,draw opacity=1 ] [dash pattern={on 0.84pt off 2.51pt}]  (321.42,6.73) -- (399.5,166.73) ;

\draw [color={rgb, 255:red, 208; green, 2; blue, 27 }  ,draw opacity=1 ] [dash pattern={on 0.84pt off 2.51pt}]  (294.67,38.29) -- (419.5,104.82) ;

\draw [color={rgb, 255:red, 208; green, 2; blue, 27 }  ,draw opacity=1 ] [dash pattern={on 0.84pt off 2.51pt}]  (309.07,85.88) -- (433.5,85.88) ;

\draw [color={rgb, 255:red, 208; green, 2; blue, 27 }  ,draw opacity=1 ] [dash pattern={on 0.84pt off 2.51pt}]  (336.78,3.97) -- (336.78,191.73) ;

\draw [color={rgb, 255:red, 208; green, 2; blue, 27 }  ,draw opacity=1 ][line width=0.75]    (342.64,138.21) .. controls (384.54,141.27) and (401.22,131.17) .. (412.01,89.65) ;
\draw [shift={(412.5,87.73)}, rotate = 464.04] [color={rgb, 255:red, 208; green, 2; blue, 27 }  ,draw opacity=1 ][line width=0.75]    (10.93,-3.29) .. controls (6.95,-1.4) and (3.31,-0.3) .. (0,0) .. controls (3.31,0.3) and (6.95,1.4) .. (10.93,3.29)   ;
\draw [shift={(340,138)}, rotate = 4.8] [color={rgb, 255:red, 208; green, 2; blue, 27 }  ,draw opacity=1 ][line width=0.75]    (10.93,-3.29) .. controls (6.95,-1.4) and (3.31,-0.3) .. (0,0) .. controls (3.31,0.3) and (6.95,1.4) .. (10.93,3.29)   ;

\draw (117.55,88.38) node   {$i$};
\draw (97.66,25.27) node   {$j$};
\draw (120.61,121.36) node   {$\tau $};
\draw (257.55,87.75) node   {$i$};
\draw (215.54,24.65) node   {$j$};
\draw (195.14,63.16) node [scale=0.8]  {$k$};
\draw (260.3,50.13) node   {$V_{k}$};

\end{tikzpicture}
    \caption{Closed regions corresponding to $\tau$, $V_k$, and  $\Upsilon$.}
    \label{regions for concordance invariants}
\end{figure}
Lastly, we give a formula for computing $V_k(K^*)$ from $\G_0(K)$, which is obtained via the duality theorem for $\G_0$.
\begin{proposition}
\label{prop: G_0 and mirror V_k}
 The invariant $V_k(K^*)$ is determined from $\G_0(K)$
 by the formula
 \[
 V_k(K^*) = \min\{m \in \z_{\geq 0} \mid 
 \forall R \in \G_0(K),\ (-m, -k-m) \in R\}.
 \]
\end{proposition}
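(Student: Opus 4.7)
The plan is to chain together the three main results on $\G_0$ stated in Section~2.3: the formula for $V_k$ in \Cref{thm: G_0 and others}, the minimality characterization of \Cref{thm: minimalize}, and the duality theorem \Cref{thm: duality}. Applying the second formula of \Cref{thm: G_0 and others} to the mirror knot $K^*$ gives
\[
V_k(K^*) = \min\{m \in \Z_{\geq 0} \mid \exists R \in \G_0(K^*),\ R \subset R_{(m,k+m)}\}.
\]
By \Cref{thm: minimalize}, the existential condition $\exists R \in \G_0(K^*),\ R \subset R_{(m,k+m)}$ is equivalent to $R_{(m,k+m)} \in \tG_0(K^*)$.

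Next, I apply the duality theorem (\Cref{thm: duality}), which translates membership in $\tG_0(K^*)$ into a condition on $\G_0(K)$: namely, $R_{(m,k+m)} \in \tG_0(K^*)$ if and only if $R_{(m,k+m)} \cap (-R') \neq \varnothing$ for every $R' \in \G_0(K)$. Thus
\[
V_k(K^*) = \min\{m \in \Z_{\geq 0} \mid \forall R' \in \G_0(K),\ R_{(m,k+m)} \cap (-R') \neq \varnothing\}.
\]

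The remaining step is to show that for any closed region $R' \in \CR(\Z^2)$, the condition $R_{(m,k+m)} \cap (-R') \neq \varnothing$ is equivalent to $(-m,-k-m) \in R'$. Unravelling definitions, $R_{(m,k+m)} \cap (-R') \neq \varnothing$ means there exists $(a,b) \in R'$ with $-a \leq m$ and $-b \leq k+m$, i.e.\ $(-m,-k-m) \leq (a,b)$. Since $R'$ is a lower set in $(\Z^2, \leq)$, the existence of such $(a,b) \in R'$ is equivalent to $(-m,-k-m) \in R'$ itself (the converse direction being trivial by taking $(a,b)=(-m,-k-m)$). Substituting this equivalence into the previous display yields the desired formula.

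The proof is essentially a bookkeeping exercise assembling three prior results, so there is no genuine obstacle; the only care needed is to correctly handle the negation involved in $-R'$ and to invoke the lower-set property of elements of $\CR(\Z^2)$ to collapse the nonemptiness of an intersection to a single lattice-point membership test.
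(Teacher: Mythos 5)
Your proof is correct and takes essentially the same approach as the paper: both chain the $V_k$ formula from \Cref{thm: G_0 and others}, the minimality characterization from \Cref{thm: minimalize}, and the duality theorem \Cref{thm: duality}, and both resolve the resulting intersection condition $R_{(m,k+m)} \cap (-R') \neq \varnothing$ into the single membership test $(-m,-k-m)\in R'$ via the lower-set property of closed regions. The paper merely presents the set-theoretic equivalence up front before assembling the chain, whereas you derive it at the end, but the content is identical.
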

\begin{proof}
For any $R \in \CR(\z^2)$, we see that 
\begin{eqnarray*}
(-m,-k-m) \in R
&\Leftrightarrow&
R \cap (-R_{(m,k+m)}) \neq \varnothing\\
&\Leftrightarrow&
R_{(m,k+m)} \cap (-R) \neq \varnothing.
\end{eqnarray*}
Therefore, it follows from \Cref{thm: duality}
that $R_{(m,k+m)}\in \tG_0(K^*)$ if and only if 
any $R \in \G_0(K)$ contains $(-m,-k-m)$.
Now, by \Cref{thm: minimalize} and \Cref{thm: G_0 and others}, we have
\begin{eqnarray*}
V_k(K^*)&=&\min\{m \in \z_{\geq 0} \mid  
\exists R \in \G_0(K^*), R \subset R_{(m,k+m)} \}\\
&=&\min\{m \in \z_{\geq 0} \mid  
R_{(m, k+m)} \in \tG_0(K^*) \}\\
&=&\min\{m \in \z_{\geq 0} \mid  
\forall R \in \G_0(K), (-m,-k-m) \in R \}.
\end{eqnarray*}
\end{proof}

\subsubsection{On the range of $\G_0$}
\label{range}
Here we discuss the range of $\G_0$. We first introduce 
several notions of closed regions.
For any subset $S \subset \z^2$, 
define the {\it closure} of $S$ by
$$
cl(S) := \bigcup_{(i,j) \in S} R_{(i,j)}.
$$
Then we also have $cl(S) \in \CR(\z^2)$. 
Moreover,  the equality 
$$
cl(S) = \bigcap_{R \in \CR(\z^2), S \subset R} R
$$ 
holds.
We say that a closed region $R \in \CR(\Z^2)$ is 
a {\it semi-simple region} if there exists
a non-empty finite subset $S \subset \Z^2$ such that $R=cl(S)$.

As examples of semi-simple regions, we define the {\it closure} of any chain $x = \sum_{1 \leq k \leq r} p_k(U) x_k \in CFK^\infty(K)$
by
\[
cl(x) := cl\left\{(\Alg(U^{l(p_k)}x_k), \Alex(U^{l(p_k)}x_k)) \  \middle| \  
\begin{array}{ll}
1 \leq k \leq r\\
p_k(U) \neq 0
\end{array}
\right\},
\]
where $l(p_k)$ denotes the lowest degree of 
$p_k(U)\in \Lambda$.
Note that the equality 
\[
cl(x) = \bigcap_{R \in \CR, x \in (CFK^\infty(K))_R} R\]
holds. (The proof is seen in  \cite[Lemma 5.5]{2019arXiv190709116S}.)

Let us denote the set of semi-simple regions by $\CR^{ss}(\Z^2)$.
Then, the following proposition immediately follows from 
\cite[Theorem 5.7]{2019arXiv190709116S}.
\begin{proposition}
\label{prop: realizer}
For any $R \in \G_0(K)$, there exists a homological generator
$x$ of $CFK^\infty(K)$ whose closure is equal to $R$.
In particular, for any knot $K$, we have 
\[
\G_0(K) \subset \CR^{ss}(\z^2).
\]
\end{proposition}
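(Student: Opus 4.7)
The plan is to show the existence of a realizing homological generator directly from the definitions of $\tG_0$, $\G_0$, and $cl(x)$, invoking the cited \cite[Theorem 5.7]{2019arXiv190709116S} for the underlying technical content. Since $R \in \G_0(K) = \min \tG_0(K)$, in particular $R \in \tG_0(K)$, so by the definition of $\tG_0$ there exists at least one homological generator $x$ of $CFK^\infty(K)$ lying in the subcomplex $C_R$, where $C := CFK^\infty(K)$.

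Next, I would invoke the alternative description of the closure operator recalled just above the statement, namely
\[
cl(x) = \bigcap_{R' \in \CR(\Z^2),\, x \in C_{R'}} R',
\]
which is attributed to \cite[Lemma 5.5]{2019arXiv190709116S}. Since the region $R$ itself appears in this intersection (because $x \in C_R$), we obtain $cl(x) \subset R$. On the other hand, $x \in C_{cl(x)}$, so $cl(x)$ is itself a closed region containing a homological generator, and therefore $cl(x) \in \tG_0(K)$.

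Now I would apply the minimality of $R$ in $\tG_0(K)$: since $cl(x) \in \tG_0(K)$ and $cl(x) \subset R$, minimality forces $cl(x) = R$. This proves the first assertion of the proposition, exhibiting $x$ as a homological generator with $cl(x)=R$. For the second assertion, writing $x = \sum_{k} p_k(U) x_k$ in a filtered basis (with at least one $p_k \neq 0$ since $x$ is a cycle representing a nonzero class), the set
\[
S := \bigl\{ (\Alg(U^{l(p_k)}x_k),\, \Alex(U^{l(p_k)}x_k)) \ \big| \ 1\leq k\leq r,\ p_k(U)\neq 0 \bigr\}
\]
is non-empty and finite, and $R = cl(x) = cl(S)$ by definition. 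Hence $R \in \CR^{ss}(\Z^2)$, and the inclusion $\G_0(K)\subset \CR^{ss}(\Z^2)$ follows.

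There is essentially no obstacle in this argument, as the substantive content is already packaged into \cite[Theorem 5.7]{2019arXiv190709116S} (which guarantees the existence of the homological generator realizing a minimal region) together with the intersection description of $cl(x)$; the proposition reduces to chasing these two facts through the definition of $\G_0 = \min \tG_0$.
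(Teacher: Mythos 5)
Your proof is correct, and it supplies what the paper itself elides: the paper merely states that the proposition ``immediately follows from \cite[Theorem 5.7]{2019arXiv190709116S}'' without spelling out the derivation, whereas you reconstruct the argument from scratch. Your route is the natural one: use the definition of $\tG_0$ to extract a homological generator $x \in C_R$, use the characterization $cl(x) = \bigcap_{R': x \in C_{R'}} R'$ to get $cl(x) \subset R$, promote $cl(x)$ to an element of $\tG_0(K)$, and conclude $cl(x) = R$ by minimality. This is self-contained and does not actually need to invoke Theorem~5.7 at all, which is arguably cleaner than leaning on a cited result whose statement (as recorded in this paper) is only the non-emptiness and finiteness of $\G_0(K)$.

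The one place where you assert without justification is the claim $x \in C_{cl(x)}$. This is true, but it is not an automatic consequence of the intersection formula alone: a priori, $\bigcap_{R': x \in C_{R'}} R'$ could fail to be one of the regions in the index set (equivalently, one needs $C_{\bigcap_i R_i} = \bigcap_i C_{R_i}$). In the $CFK^\infty$ setting this holds because of the direct-sum decomposition $C = \bigoplus_{(i,j)} C_{(i,j)}$ with $C_R = \bigoplus_{(i,j) \in R} C_{(i,j)}$ (recalled in the proof of Lemma~\ref{lem: CFK^- 1}): writing $x = \sum_k p_k(U) x_k$ in a filtered basis, each nonzero monomial term $U^j x_k$ lies in a summand $C_{(i',j')}$ with $(i',j') \leq (\Alg(U^{l(p_k)}x_k), \Alex(U^{l(p_k)}x_k)) \in cl(x)$, hence $x \in C_{cl(x)}$. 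Alternatively, this is exactly what is established en route to \cite[Lemma 5.5]{2019arXiv190709116S}. Adding this one sentence would make your argument complete; everything else is correct.
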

As a corollary, we have the following.
\begin{corollary}
\label{cor: realizer}
For any $R \in \G_0(K)$, there exists a non-empty finite 
subset $\{x_k\}_{k=1}^n \subset CFK^\infty(K)$
of linearly independent 0-chains
such that 
\begin{itemize}
    \item $\sum_{k=1}^n x_k$ is a homological generator of $CFK^\infty(K)$, and
    \item $R= cl(\{(\Alg(x_k), \Alex(x_k))\}_{k=1}^n)$.
\end{itemize}
\end{corollary}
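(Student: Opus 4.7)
The plan is to derive the corollary directly from \Cref{prop: realizer} by expanding the homological generator it produces in a filtered basis and peeling off its $0$-chain summands using the Maslov-grading constraint.

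First, I would apply \Cref{prop: realizer} to fix a homological generator $x \in C := CFK^\infty(K)$ with $cl(x) = R$; in particular $\gr(x) = 0$. Next, fixing a filtered basis $\{x_k\}_{1\leq k \leq r}$ of $C$ over $\Lambda$ as in property (5) of \Cref{subsubsec: CFK}, I would write $x = \sum_{k=1}^{r} p_k(U)\, x_k$ with $p_k(U) \in \Lambda$ and set $I := \{k \mid p_k \neq 0\}$.

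The key step is a homogeneity argument. Since each basis element $x_k$ is Maslov-homogeneous and $U$ shifts the Maslov grading by $-2$, the monomials $\{U^m x_k\}_{m \in \Z}$ lie in pairwise distinct Maslov gradings. The requirement $\gr(x) = 0$ therefore forces each $p_k(U)$ with $k \in I$ to collapse to a single monomial $p_k(U) = U^{m_k}$ with $m_k = \gr(x_k)/2$ (so in particular $\gr(x_k)$ is even). Setting $y_k := U^{m_k} x_k$ for $k \in I$, each $y_k$ is a $0$-chain, and the family $\{y_k\}_{k \in I}$ is $\F$-linearly independent because $\{x_k\}_{1 \leq k \leq r}$ is free over $\Lambda$. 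By construction $\sum_{k \in I} y_k = x$, which is a homological generator.

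Finally, with $p_k(U) = U^{m_k}$ a single monomial one has $l(p_k) = m_k$ and hence $U^{l(p_k)} x_k = y_k$, so the defining formula for $cl(x)$ yields
\[
cl\{(\Alg(y_k), \Alex(y_k)) \mid k \in I\} = cl(x) = R,
\]
as required. I do not foresee a genuine obstacle; the only point deserving care is the homogeneity argument that forces each $p_k(U)$ to reduce to one monomial, which depends essentially on the filtered basis consisting of Maslov-homogeneous elements.
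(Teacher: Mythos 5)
Your proof is correct and follows the natural derivation that the paper leaves implicit (the corollary is stated without proof, as a direct consequence of \Cref{prop: realizer}). The homogeneity argument — that a Maslov-degree-$0$ chain expanded in a filtered basis of homogeneous elements must have each coefficient $p_k(U)$ collapse to the single monomial $U^{\gr(x_k)/2}$ — is precisely the step that identifies the $0$-chain summands with the terms appearing in the definition of $cl(x)$, and the freeness of the basis over $\Lambda$ gives linear independence.
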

Next, we introduce the notion of the {\it corners} of a semi-simple region, which will be used in the coming sections.

For a subset $S \subset \z^2$,
an element $s \in S$ is {\it maximal in $S$}
if it satisfies
$$
\text{if } s' \in S \text{ and } s' \geq s, \text{ then } s'=s.
$$
Define the map 
$$
\max \colon \mathcal{P}(\Z^2) \to \mathcal{P}(\Z^2)
$$
by
$$
S \mapsto \{ s \in S \mid s \text{ is maximal in } S\}.
$$ 
Then, for any $R \in \CR^{ss}(\z^2)$, the set of {\it corners} of $R$ is defined by
$$
c(R) := \max R.
$$
Here we prove that for $R \in \CR^{ss}(\z^2)$, the set
$c(R)$ is non-empty and finite. 
\begin{lemma}
\label{max S = max R}
For any $R \in \CR^{ss}(\z^2)$ and
non-empty finite set $S \subset \Z^2$ satisfying $cl(S)=R$,
the equality
$$
\max S = c(R)
$$
holds. In particular, $c(R)$ is non-empty and finite.
\end{lemma}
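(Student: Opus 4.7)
The plan is to establish the equality $\max S = c(R)$ by double inclusion directly from the definitions, and then deduce non-emptiness and finiteness of $c(R)$ as an immediate consequence. No new machinery is needed beyond unwinding $cl(S) = \bigcup_{t \in S} R_t$ together with the coordinate-wise partial order on $\Z^2$; the argument is essentially set-theoretic.

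For $\max S \subset c(R)$, I would take $s \in \max S$. Clearly $s \in S \subset cl(S) = R$. To verify that $s$ is maximal in $R$, suppose $r \in R$ satisfies $r \geq s$. By the defining expression $R = \bigcup_{t \in S} R_t$, there exists $t \in S$ with $r \leq t$. Then $s \leq r \leq t$, so $s \leq t$ in $S$, and the maximality of $s$ in $S$ forces $s = t$; hence $r = s$. For the reverse inclusion $c(R) \subset \max S$, I would take $r \in c(R)$. Again using $R = \bigcup_{t \in S} R_t$, pick $t \in S$ with $r \leq t$. Since $t \in S \subset R$, the maximality of $r$ in $R$ gives $r = t$, so in particular $r \in S$. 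Now if $s' \in S$ satisfies $s' \geq r$, then $s' \in R$ and maximality of $r$ in $R$ yields $s' = r$, proving $r \in \max S$.

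For the ``in particular'' clause, the set $\max S$ is non-empty because $S$ is a non-empty finite subset of a poset (starting from any point and walking upward through $S$ terminates at a maximal element), and $\max S$ is finite because it is contained in $S$. The equality $c(R) = \max S$ then transfers both properties to $c(R)$.

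I do not anticipate any serious obstacle; the only point requiring a little care is the step in $c(R) \subset \max S$ where one must argue that a corner of $R$ actually lies in $S$ itself, and not merely in $R$. This is precisely what the simple-region trick accomplishes: a corner $r$ is dominated by some $t \in S \subset R$, and its own maximality in $R$ collapses $t$ back onto $r$.
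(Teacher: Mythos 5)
Your proof is correct and follows the same double-inclusion strategy as the paper, unwinding $cl(S)=\bigcup_{t\in S}R_t$ and using maximality in $S$ and $R$. The paper additionally invokes an auxiliary ``replace to maximal'' lemma (that every $s\in S$ is dominated by some element of $\max S$) in both inclusions, but your more direct argument shows this detour is unnecessary; both give the same result with the same essential content.
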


\begin{proof}
For proving Lemma~\ref{max S = max R}, we use the following lemma,
which is obtained as an analogy of 
\cite[Lemma~5.4]{2019arXiv190709116S}.
\begin{lemma}
\label{replace to maximal}
For any $s \in S$, there exists an element $s' \in \max S$ with $s' \geq s$. 
\end{lemma}
We first prove $\max S \subset \max R$.
 Let $s \in \max S$, and suppose that $p \in R$ satisfies $s \leq p$.
Then, since $cl(S)=R$, there exists an element $s' \in S$ such that $p \leq s'$.
Moreover, by Lemma~\ref{replace to maximal}, we have $s'' \in \max S$
with $s' \leq s''$. Now, we have 
$$
s \leq p \leq s' \leq s''.
$$
Here, since $s'' \in S$ and $s \in \max S$, we have $s = p = s' = s''$.
This implies $s \in \max R$.

Next, we prove $\max S \supset \max R$.
 Let $p \in \max R$, and suppose that $s \in S$ satisfies $p \leq s$.
Here, by Lemma~\ref{replace to maximal}, we may assume that $s \in \max S$.
Then, we have 
$$
s \in S \subset cl(S)=R,
$$
and hence the maximality of $p$ gives $p=s \in \max S$.
\end{proof}
Actually, the equality `$\max S = c(R)$' is a 
necessary and sufficient condition for a non-empty finite subset $S \subset \z^2$ to generate $R \in \CR^{ss}(\z^2)$.
Namely, we have the following.
\begin{lemma}
\label{lem: char ss}
For any semi-simple region $R$, 
a non-empty finite set $S \subset \Z^2$ satisfies $cl(S)=R$
if and only if $\max S = c(R)$. In particular,  $cl(c(R))=R$.
\end{lemma}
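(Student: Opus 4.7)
The plan is to reduce everything to the key observation that for any non-empty finite subset $T \subset \Z^2$, we have $cl(T) = cl(\max T)$. Granting this, the lemma follows almost immediately from \Cref{max S = max R}.

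First I would establish the key observation. Given any $s \in T$, \Cref{replace to maximal} furnishes some $s' \in \max T$ with $s \leq s'$, hence $R_s \subset R_{s'}$. Taking unions, $cl(T) = \bigcup_{s \in T} R_s \subset \bigcup_{s' \in \max T} R_{s'} = cl(\max T)$, and the reverse inclusion is automatic since $\max T \subset T$. Thus $cl(T) = cl(\max T)$.

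Next, for the ``only if'' direction of the main equivalence, I would simply cite \Cref{max S = max R}: if $cl(S) = R$, then $\max S = c(R)$. For the ``if'' direction, suppose $\max S = c(R)$. Since $R$ is semi-simple, choose any non-empty finite $T \subset \Z^2$ with $cl(T) = R$; by \Cref{max S = max R}, $\max T = c(R) = \max S$. Applying the key observation twice,
\[
cl(S) = cl(\max S) = cl(c(R)) = cl(\max T) = cl(T) = R,
\]
as desired.

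Finally, for the ``in particular'' claim, I would take $S := c(R)$. Since the elements of $c(R) = \max R$ are pairwise incomparable in $\Z^2$ (any two comparable elements of $\max R$ must coincide by maximality in $R$), we have $\max c(R) = c(R)$, so the ``if'' direction just proved yields $cl(c(R)) = R$. The only delicate point — and thus the main obstacle — is really just being careful that \Cref{replace to maximal} is available for finite subsets so that the key observation goes through; beyond that the argument is purely formal.
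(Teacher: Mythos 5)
Your proposal is correct and follows essentially the same route as the paper: both directions use \Cref{max S = max R}, and the ``if'' direction uses the observation $cl(T) = cl(\max T)$ (via \Cref{replace to maximal}) applied twice, exactly as in the paper's proof. Your additional remark that $\max c(R) = c(R)$ (because maximal elements of $R$ are pairwise incomparable) is a nice explicit justification of the ``in particular'' claim that the paper leaves implicit.
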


\begin{proof}
First, suppose that a non-empty finite set $S \subset \Z^2$ satisfies $cl(S)=R$. Then,
by Lemma~\ref{max S = max R}, we have
$$
\max S = \max R = c(R).
$$

Conversely, suppose that a non-empty finite set $S \subset \Z^2$ satisfies $\max S = c(R)$, and 
take a non-empty finite set $S' \subset \Z^2$ with $cl(S')=R$. Then we have
$$
\max S' = \max R = c(R)= \max S.
$$
Here, by Lemma~\ref{replace to maximal}, we see that
$$
cl(\max F) = cl(F)
$$
for any finite subset $F \subset \Z^2$. This implies
$$
cl(S) = cl(\max S) = cl(\max S') = cl(S') = R.
$$
\end{proof}

    \section{Translating $\G_0$ into $CFK^-$}
\label{sec: translate G_0}

In this section, we introduce shifted versions of 
$\G_0(K)$
which enable us to define $\G_0$-type invariants for
$CFK^-(K)$ and $C^-(G)$.
Moreover, we prove that
after shifting sufficiently large times, 
such shifted $\G_0$ for $CFK^-(K)$ recovers
the original $\G_0(K)$. 

\subsection{Shifted $\G_0$}
\label{generalization of G_0}
Let $C$ be a $\Z^2$-filtered chain complex over $\F$ such that 
$H_n(C) \cong \F$ for
given $n \in \Z$.
Then, a homogeneous cycle $x \in C_n$ is called a {\it homological generator of degree $n$} if the homology class $[x] \in H_n(C) \cong \F$ is non-zero.
Now we define 
$$
\tG_0^{(n)}(C) := \{R \in \CR(\Z^2) \mid 
\text{$C_R$ contains a homological generator of degree $n$}\}
$$
and
$$
\G_0^{(n)}(C) := \min \tG_0^{(n)}(C).
$$
Let $C$ and $C'$ be two $\Z^2$-filtered chain complexes over $\F$
with $H_n(C) \cong H_n(C') \cong \F$ for $n \in \Z$. Then, a chain map $f \colon C \to C'$ over $\F$ is a 
{\it $\Z^2$-filtered quasi-isomorphism at degree $n$}
if $f$ is $\Z^2$-filtered, graded and induces an isomorphism 
$f_{*,n} \colon H_n(C) \to H_n(C')$.
\begin{proposition}
\label{prop: G_0 deg n}
Let $C$ and $C'$ be two $\Z^2$-filtered chain complexes over $\F$
with $H_n(C) \cong H_n(C') \cong \F$ for given $n \in \Z$.
If there exists a $\Z^2$-filtered quasi-isomorphism 
$f \colon C \to C'$ at degree $n$, then we have
$$
\tG_0^{(n)}(C) \subset \tG_0^{(n)}(C').
$$
In particular, if $C \heq{\Z^2}{\F} C'$, then we have
$$
\tG_0^{(n)}(C) = \tG_0^{(n)}(C')
$$
and
$$
\G_0^{(n)}(C) = \G_0^{(n)}(C').
$$
\end{proposition}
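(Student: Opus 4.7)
The plan is to establish the inclusion $\tG_0^{(n)}(C) \subset \tG_0^{(n)}(C')$ directly from the definitions; both subsequent equalities then follow formally. First, I would fix $R \in \tG_0^{(n)}(C)$, which by definition supplies a homological generator $x \in C_R$ of degree $n$, i.e. a cycle $x$ with $\gr(x) = n$ whose class in $H_n(C) \cong \F$ is nonzero. The claim is that $f(x)$ witnesses $R \in \tG_0^{(n)}(C')$.

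To verify this I would check the three required properties in turn: (i) $f(x) \in C'_R$ because $f$ is $\Z^2$-filtered, so $f(C_R) \subset C'_R$; (ii) $f(x)$ is homogeneous of Maslov degree $n$ because $f$ is graded; (iii) $[f(x)] \neq 0$ in $H_n(C')$ because $f_{*,n} \colon H_n(C) \to H_n(C')$ is an isomorphism by hypothesis and $[x] \neq 0$. Moreover $f(x)$ is a cycle since $f$ is a chain map. Thus $f(x)$ is a homological generator of $C'$ of degree $n$ lying in $C'_R$, establishing the inclusion.

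For the \emph{in particular} clause, suppose $C \heq{\Z^2}{\F} C'$. Unpacking the definition of $\Z^2$-filtered homotopy equivalence over $\F$, there exist $\Z^2$-filtered graded chain maps $f \colon C \to C'$ and $g \colon C' \to C$ together with graded $\Z^2$-filtered chain homotopies realizing $g \circ f \simeq \mathrm{id}_C$ and $f \circ g \simeq \mathrm{id}_{C'}$. These force $f_{*,n}$ and $g_{*,n}$ to be mutually inverse isomorphisms on $H_n$, so both $f$ and $g$ are $\Z^2$-filtered quasi-isomorphisms at degree $n$. Applying the inclusion just proved in both directions gives $\tG_0^{(n)}(C) = \tG_0^{(n)}(C')$, after which $\G_0^{(n)}(C) = \G_0^{(n)}(C')$ is obtained by applying the operator $\min$ to both sides.

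There is essentially no obstacle here: the argument is a direct unwinding of the definitions, and the only point worth emphasizing is that a $\Z^2$-filtered graded chain map automatically transports a homological generator of degree $n$ inside $C_R$ to a homological generator of degree $n$ inside $C'_R$, because all three attributes (being a cycle with nonzero homology class, being concentrated in Maslov degree $n$, and belonging to the region $R$) are preserved by $f$.
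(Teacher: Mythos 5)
Your proof is correct and takes essentially the same approach as the paper: push a homological generator $x \in C_R$ through $f$, note that $f(x)$ remains a homological generator of degree $n$ lying in $C'_R$, and then deduce the equalities from the two-sided quasi-isomorphisms supplied by a filtered homotopy equivalence, followed by applying $\min$.
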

\begin{proof}
If $x\in C_R$ is a homological generator of degree $n$, then
$f(x)\in C'$ is also a homological generator of degree $n$,
and $f(x) \in f(C_R) \subset C'_R$. 
\end{proof}
Here let us consider the case of $CFK^{\infty}$.
By the global triviality, 
we can define $\G^{(n)}_{0}(CFK^\infty(K))$
for any even integer $n$.
We show that all $\G^{(n)}_{0}(CFK^\infty(K))$ can recover one another. To show that, we define {\it shifts} of closed regions.

Let $R \in \CR(\Z^2)$ and $s \in \Z$. Then we define
$$
R[s] := \{(i,j) \in \z^2 \mid (i+s,j+s) \in R\}.
$$
For instance, a simple region $R_{(k,l)}$ is shifted to
$R_{(k-s,l-s)}$ by $s \in \z$, i.e.\ $(R_{(k,l)})[s]=R_{(k-s,l-s)}$.
Obviously, for any $s,t \in \z$, we have
$$
(R[s])[t]=R[s+t]
$$
and
$$
R[0]=R.
$$

Next, for $\mathcal{S} \subset \CR(\Z^2)$ and $s \in \z$,
we define
$$
\mathcal{S}[s] := \{R[s] \in \CR(\z^2) \mid R \in \mathcal{S} \},
$$
and then we have
$(\mathcal{S}[s])[t]= \mathcal{S}[s+t]$ and $\mathcal{S}[0]=\mathcal{S}$.

\begin{proposition}
\label{prop: shifted G_0 for CFK}
For any knot $K$ and $s \in \Z$, we have 
$$
\tG_0^{(-2s)}(CFK^\infty(K))= \tG_0(K)[s].
$$
and
$$
\G_0^{(-2s)}(CFK^\infty(K))= \G_0(K)[s].
$$
\end{proposition}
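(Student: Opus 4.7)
The plan is to exploit the fourth property of $CFK^\infty$ listed in Section~\ref{subsubsec: CFK}: multiplication by $U$ shifts the Maslov grading by $-2$ and lowers both the Alexander and algebraic filtration levels by $1$. Combined with global triviality (which gives $H_*(CFK^\infty(K)) \cong \Lambda$, so $U^s$ acts as an isomorphism on homology), this will let $U^s$ play the role of a dictionary between homological generators of degree $0$ and of degree $-2s$, with the corresponding region getting shifted by $s$.

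First, I would check the basic bookkeeping lemma: for every $R \in \CR(\z^2)$ and every $s \in \z$, the $\F$-linear map $U^s \colon C \to C$ (where $C := CFK^\infty(K)$) restricts to an $\F$-linear isomorphism
\[
U^s \colon C_R \xrightarrow{\ \cong\ } C_{R[s]}.
\]
The inclusion $U^s(C_R) \subset C_{R[s]}$ follows because any element supported at bifiltration $(a,b) \in R$ is sent by $U^s$ to an element supported at $(a-s, b-s)$, and $(a-s,b-s) \in R[s]$ iff $(a,b) \in R$ by definition of $R[s]$. The inverse map is provided by $U^{-s}$, which similarly restricts to $C_{R[s]} \to C_{(R[s])[-s]} = C_R$. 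Moreover, $U^s$ shifts the Maslov grading by $-2s$, so it sends degree-$0$ homogeneous cycles to degree-$(-2s)$ homogeneous cycles.

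Next, I would combine this with the fact that $U^s \colon H_0(C) \to H_{-2s}(C)$ is an isomorphism (each side is a copy of $\F$ and the generators correspond under the $\Lambda$-action). This means $x \in C_R$ is a homological generator of degree $0$ if and only if $U^s x \in C_{R[s]}$ is a homological generator of degree $-2s$. This correspondence yields
\[
R \in \tG_0(K) \ \Longleftrightarrow\ R[s] \in \tG_0^{(-2s)}(CFK^\infty(K)),
\]
which is exactly the first claimed equality $\tG_0^{(-2s)}(CFK^\infty(K)) = \tG_0(K)[s]$.

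Finally, to pass from $\tG_0$ to $\G_0$, I would observe that the shift operation $R \mapsto R[s]$ is a bijection of $\CR(\z^2)$ that preserves and reflects inclusion: $R \subset R'$ iff $R[s] \subset R'[s]$. Hence minimality is preserved, and for any $\mathcal{S} \subset \CR(\z^2)$ we have $\min(\mathcal{S}[s]) = (\min \mathcal{S})[s]$. Applying this to $\mathcal{S} = \tG_0(K)$ gives
\[
\G_0^{(-2s)}(CFK^\infty(K)) = \min \tG_0^{(-2s)}(CFK^\infty(K)) = \min(\tG_0(K)[s]) = \G_0(K)[s],
\]
completing the proof. There is no real obstacle here; the argument is essentially a careful tracking of how $U^s$ interacts with the bifiltration and the Maslov grading, together with the trivial fact that shifts commute with taking minima.
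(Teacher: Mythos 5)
Your proposal is correct and follows essentially the same route as the paper: it exploits the fourth property of $CFK^\infty$ (that $U$ shifts the Maslov grading by $-2$ and both filtration levels by $-1$) together with global triviality, so that multiplication by a power of $U$ gives a dictionary between homological generators of degree $0$ in $C_R$ and homological generators of degree $-2s$ in $C_{R[s]}$. The only cosmetic difference is that the paper phrases the bijection via $U^{-s}$ (mapping degree $-2s$ back to degree $0$) while you use $U^s$ in the forward direction, and you spell out the passage from $\tG_0$ to $\G_0$ (shift commutes with $\min$) which the paper leaves implicit.
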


\begin{proof}
By the fourth property of $CFK^\infty$, the following hold:
\begin{itemize}
\item An element $x \in C$ is a homological generator
of degree $-2s$ if and only if
$U^{-s}x$ is a homological generator (of degree $0$).
\item The equality $U^{-s}\cdot C_R
= C_{R[-s]}$ holds.
\end{itemize}
In particular, $C_R$ contains a homological generator
of degree $-2s$ if and only if 
$C_{R[-s]}$ contains a homological generator (of degree 0).
This implies 
\begin{eqnarray*}
R \in \tG^{(-2s)}_0(C) &\Leftrightarrow& 
R[-s] \in \tG_0(C)\\
&\Leftrightarrow& 
R \in \tG_0(C)[s].
\end{eqnarray*}
This completes the proof.
\end{proof}

As corollaries of Proposition~\ref{prop: shifted G_0 for CFK},
we see that $\G_0^{(-2s)}(C)$ shares several nice properties
such as Theorem~\ref{thm: G'_0} and Theorem~\ref{thm: minimalize} with $\G_0(C)$.

\begin{corollary}
\label{cor: shifted G'_0}
For any $s \in \Z$, the set $\G^{(-2s)}_0(C)$ 
is non-empty and finite.
\end{corollary}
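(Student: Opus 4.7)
The plan is to deduce this immediately from Proposition~\ref{prop: shifted G_0 for CFK} combined with Theorem~\ref{thm: G'_0}. Here $C$ stands for $CFK^\infty(K)$ for some knot $K$, which is the context in which $\G_0^{(-2s)}$ has just been introduced via the global triviality.

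First I would invoke Proposition~\ref{prop: shifted G_0 for CFK} to rewrite
\[
\G_0^{(-2s)}(CFK^\infty(K)) = \G_0(K)[s].
\]
Next I would recall Theorem~\ref{thm: G'_0}, which asserts that $\G_0(K)$ itself is non-empty and finite. The only remaining observation is that the shift operator $R \mapsto R[s]$ on $\CR(\Z^2)$ is a bijection (its two-sided inverse is $R \mapsto R[-s]$, as noted just before Proposition~\ref{prop: shifted G_0 for CFK}), so the induced map $\mathcal{S} \mapsto \mathcal{S}[s]$ on subsets of $\CR(\Z^2)$ preserves both non-emptiness and cardinality. Applied to $\mathcal{S} = \G_0(K)$, this yields that $\G_0(K)[s]$ is non-empty and finite, which is exactly the desired conclusion.

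There is no real obstacle here: the substance is already packaged into the two preceding results, and the corollary is essentially a formal transfer of a known property along the bijection $[s]$. The only thing one needs to be careful about is observing that $[s]$ really is a bijection on $\CR(\Z^2)$ (which is clear since $[s]$ and $[-s]$ are mutually inverse), so that applying it to a non-empty finite set produces another non-empty finite set of the same size.
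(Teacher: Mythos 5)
Your proposal is correct and matches the paper's reasoning: the corollary is presented immediately after Proposition~\ref{prop: shifted G_0 for CFK} precisely as a consequence of the identity $\G_0^{(-2s)}(CFK^\infty(K)) = \G_0(K)[s]$ together with Theorem~\ref{thm: G'_0}, exactly as you argue. Your added remark that $[s]$ is a bijection on $\CR(\Z^2)$ is the small implicit step the paper leaves to the reader.
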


\begin{corollary}
\label{cor: shifted minimalize}
For any $s \in \Z$ and closed region $R$, the following holds:
$$
R \in \tG^{(-2s)}_0(C) \Leftrightarrow \exists R' \in 
\G^{(-2s)}_0(C), \  R' \subset R.
$$
\end{corollary}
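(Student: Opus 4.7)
The plan is to reduce the claim to \Cref{thm: minimalize} via \Cref{prop: shifted G_0 for CFK}. The key elementary observation is that the shift operation $R \mapsto R[s]$ is a bijection on $\CR(\Z^2)$ with inverse $R \mapsto R[-s]$, and it preserves inclusion: for any $R_1, R_2 \in \CR(\Z^2)$,
\[
R_1 \subset R_2 \iff R_1[s] \subset R_2[s],
\]
which is immediate from the definition of shift since $(i+s, j+s) \in R_1 \Leftrightarrow (i,j) \in R_1[s]$.

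Given this, I would argue as follows. By \Cref{prop: shifted G_0 for CFK}, $R \in \tG_0^{(-2s)}(C)$ if and only if $R \in \tG_0(K)[s]$, that is, $R[-s] \in \tG_0(K)$. By \Cref{thm: minimalize} applied to $R[-s]$, this holds if and only if there exists $R'' \in \G_0(K)$ with $R'' \subset R[-s]$. Shifting both sides by $s$ and using the inclusion-preserving property, this is equivalent to the existence of $R'' \in \G_0(K)$ with $R''[s] \subset R$. Setting $R' := R''[s]$, we have $R' \in \G_0(K)[s] = \G_0^{(-2s)}(C)$ by \Cref{prop: shifted G_0 for CFK}, which yields the desired forward implication. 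For the reverse direction, one runs the same chain of equivalences in reverse: given $R' \in \G_0^{(-2s)}(C)$ with $R' \subset R$, set $R'' := R'[-s] \in \G_0(K)$, so that $R'' \subset R[-s]$, hence $R[-s] \in \tG_0(K)$ by \Cref{thm: minimalize}, and therefore $R \in \tG_0^{(-2s)}(C)$.

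There is essentially no obstacle here; the only thing to check carefully is that shift commutes with minimalization in the sense that $\min(\mathcal{S}[s]) = (\min \mathcal{S})[s]$ for any $\mathcal{S} \subset \CR(\Z^2)$, which is immediate from the inclusion-preserving bijectivity of the shift. This is implicitly used to equate $\min \tG_0^{(-2s)}(C) = \G_0^{(-2s)}(C)$ with $(\min \tG_0(K))[s] = \G_0(K)[s]$, consistent with \Cref{prop: shifted G_0 for CFK}.
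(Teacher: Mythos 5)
Your proposal is correct and matches the route the paper intends: the paper states \Cref{cor: shifted minimalize} without proof, explicitly presenting it as a consequence of \Cref{prop: shifted G_0 for CFK}, and your argument (transfer \Cref{thm: minimalize} through the inclusion-preserving bijection $R \mapsto R[s]$, using both identities $\tG_0^{(-2s)}(C) = \tG_0(K)[s]$ and $\G_0^{(-2s)}(C) = \G_0(K)[s]$) is exactly the reduction the paper has in mind.
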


    \subsection{Recovering $\G_0(K)$ from 
$\G_0^{(-2s)}(CFK^-(K))$}
Let $C := CFK^\infty(K)$.
Here we consider 
$\G^{(n)}_0(CFK^-(K))= \G^{(n)}_0(\mF^{\Alg}_0(C))$.
By the seventh property of $CFK^\infty$,
we have the isomorphism 
$$
H_{n}(\mF^{\Alg}_0(C))\cong H_n(\mF^{\Alg}_0(\Lambda))\cong 
\begin{cases}
\F & (n \leq 0 \text{ and $n \colon$ even})\\
0 & (\text{otherwise})
\end{cases}.
$$
Therefore, we can define 
$\tG_0^{(-2s)}(\mF^{\Alg}_0(C))$ and
$\G_0^{(-2s)}(\mF^{\Alg}_0(C))$
for any $s \in \Z_{\geq 0}$.
Here, $\mF^{\Alg}_0(C)$ is regarded as a $\Z^2$-filtered chain complex (over $\F[U]$) by
$$
(\mF^{\Alg}_0(C))_R := \mF^{\Alg}_0(C) \cap C_R
$$
for each $R \in \CR(\Z^2)$.

In order to state the relation between $\G_0^{(-2s)}(\mF^{\Alg}_0(C))$ and
$\G_0(C)$, we first introduce the {\it shift number} of 
a closed region. 
For any $R \in \CR(\Z^2)$, we define the \textit{shift number} of $R$ by 
\[
    \shift(R) := \max\{ i \in \Z \mid \exists j \in \z \ \text{s.t.}\ (i, j) \in R \}.
\]
Here we define $\shift(\varnothing) := -\infty$
and $\shift(R):= \infty$ if there exists no integer $M$ such that $R \subset \{i\leq M\}$.
Note that $\shift(R)$ is finite if $R \in \CR^{ss}(\Z^2)$.
In addition, if $R \subset R'$, then $\shift(R) \leq \shift(R')$.

Now, we can state the relation between
$\G^{(-2s)}_0(CFK^-(K))$ and $\G_0(K)$ as follows.
\begin{theorem}
\label{thm: shifted G_0 for CFK^-}
For any knot $K$ and
$s \in \Z_{\geq 0}$, we have 
$$
\G_0^{(-2s)}(CFK^-(K))= 
\{R[s] \mid R \in \G_0(K), \  \shift(R)\leq s \}.
$$
\end{theorem}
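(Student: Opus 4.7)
The plan is to reduce the theorem to the shifted identity $\tG_0^{(-2s)}(CFK^\infty(K)) = \tG_0(K)[s]$ already established in \Cref{prop: shifted G_0 for CFK}, by understanding precisely how restricting from $C := CFK^\infty(K)$ to its subcomplex $\mF^{\Alg}_0(C) = CFK^-(K)$ modifies the $\Z^2$-filtered situation. The first key step is to establish
\[
(\mF^{\Alg}_0(C))_R = C_{R \cap \{i \leq 0\}}
\qquad \text{for every } R \in \CR(\z^2).
\]
This will follow from the filtered basis $\{x_k\}$ supplied by the fifth axiom: every element of $\mF^{\Alg}_0(C)$ decomposes uniquely into monomials $U^l x_k$ whose first bifiltration coordinate $\Alg(x_k) - l$ is $\leq 0$, and such a monomial lies in $C_R$ iff its bifiltration level lies in $R$, which under the preceding constraint is equivalent to lying in $R \cap \{i \leq 0\}$.

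Combined with the fact that the inclusion $\mF^{\Alg}_0(C) \hookrightarrow C$ induces an isomorphism on $H_{-2s}$ for every $s \geq 0$ (both sides are $\cong \F$ by the seventh axiom, and correspond under $f_{\Alg}$ to the standard inclusion $\F[U] \hookrightarrow \F[U,U^{-1}]$), a cycle of degree $-2s$ in $(\mF^{\Alg}_0(C))_R$ is a homological generator of $\mF^{\Alg}_0(C)$ if and only if it is one of $C$. This yields the pivotal equivalence
\[
R \in \tG_0^{(-2s)}(\mF^{\Alg}_0(C))
\ \Longleftrightarrow\
R \cap \{i \leq 0\} \in \tG_0^{(-2s)}(C).
\]

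From the pivotal equivalence I plan to derive the intermediate identity
\[
\G_0^{(-2s)}(\mF^{\Alg}_0(C)) = \{R \in \G_0^{(-2s)}(C) \mid R \subset \{i \leq 0\}\}.
\]
For $(\subset)$: if $R$ is minimal in $\tG_0^{(-2s)}(\mF^{\Alg}_0(C))$, then $R \cap \{i \leq 0\}$ also belongs to this set (apply the pivotal equivalence to the idempotent intersection $R \cap \{i \leq 0\}$) and is contained in $R$, so minimality forces $R \subset \{i \leq 0\}$; any strictly smaller $R' \subset R$ lying in $\tG_0^{(-2s)}(C)$ is then automatically contained in $\{i \leq 0\}$ and hence in $\tG_0^{(-2s)}(\mF^{\Alg}_0(C))$, contradicting minimality. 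The reverse inclusion $(\supset)$ is symmetric.

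The final step combines $\G_0^{(-2s)}(C) = \G_0(K)[s]$ with the elementary identity $\shift(Q[s]) = \shift(Q) - s$ (immediate from the definition $Q[s] = \{(i,j) \mid (i+s, j+s) \in Q\}$), so that the condition $Q[s] \subset \{i \leq 0\}$ becomes $\shift(Q) \leq s$, producing the stated formula. The main technical obstacle will be the first identity $(\mF^{\Alg}_0(C))_R = C_{R \cap \{i \leq 0\}}$: although intuitively obvious, it is not tautological from the abstract axioms alone, since intersections do not distribute over sums of filtration pieces in general, and genuinely relies on the filtered basis to separate every element into its unique monomial components.
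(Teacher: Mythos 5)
Your proposal is correct and follows essentially the same route as the paper's proof. The intersection identity $(\mF^{\Alg}_0(C))_R = C_{R\cap\{i\leq 0\}}$ is the paper's Lemma~\ref{lem: CFK^- 1} (there derived from a direct-sum decomposition cited from \cite{2019arXiv190709116S}, while you derive it directly from a filtered basis), your ``pivotal equivalence'' is a clean two-way repackaging of the paper's Lemmas~\ref{lem: CFK^- 2} and~\ref{lem: CFK^- 3} --- both resting on the inclusion $CFK^-(K)\hookrightarrow CFK^\infty(K)$ being a $\Z^2$-filtered quasi-isomorphism at degree $-2s$ --- and the final translation via $\shift(Q[s]) = \shift(Q) - s$ and $\shift(R) \leq 0 \Leftrightarrow R \subset \{i\leq 0\}$ matches the paper's conclusion.
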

To prove \Cref{thm: shifted G_0 for CFK^-},
we use the following three lemmas.
\begin{lemma}
\label{lem: CFK^- 1}
For any $R \in \CR(\Z^2)$, we have
$$
(\mF^{\Alg}_0(C))_R = C_{R \cap \{i \leq 0\}}
= (\mF^{\Alg}_0(C))_{R \cap \{i \leq 0 \}}.
$$
\end{lemma}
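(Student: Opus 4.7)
The plan is to reduce both equalities to a pointwise statement about the filtered basis $\{x_k\}$ of $C = CFK^\infty(K)$ guaranteed by property (5). Using this basis, every element $c \in C$ has a unique expansion as a finite $\F$-linear combination of monomials $U^l x_k$ (with $l \in \Z$), and the $\Z^2$-filtration induced by $(\{\falg{i}\},\{\falex{j}\})$ admits a clean monomial-level description.

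First, I would verify the following characterization of $C_R$: for any closed region $R \subset \Z^2$, an element $c = \sum_{k,l} a_{k,l} U^l x_k$ (with $a_{k,l} \in \F$) lies in $C_R$ if and only if for every monomial $U^l x_k$ appearing with $a_{k,l} \neq 0$, the pair $(\Alg(U^l x_k), \Alex(U^l x_k))$ lies in $R$. The ``if'' direction is immediate from $\falg{i} \cap \falex{j} \subset C_R$ for $(i,j)\in R$. For the ``only if'' direction I would use property (5) to observe that $\falg{i} \cap \falex{j}$ is spanned as an $\F$-vector space by those monomials $U^l x_k$ with $\Alg(U^l x_k) \leq i$ and $\Alex(U^l x_k) \leq j$, so the sum $C_R = \sum_{(i,j)\in R} \falg{i} \cap \falex{j}$ is spanned exactly by the monomials whose $(\Alg,\Alex)$-coordinates lie in $R$ (here we use that $R$ is a lower set).

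With this characterization in hand, the first equality becomes transparent: an element $c$ lies in $\mF^{\Alg}_0(C) \cap C_R$ iff every monomial $U^l x_k$ in $c$ satisfies both $(\Alg, \Alex) \in R$ and $\Alg \leq 0$, which is exactly the condition for $c \in C_{R \cap \{i \leq 0\}}$. The inclusion $C_{R \cap \{i \leq 0\}} \subset \mF^{\Alg}_0(C) \cap C_R$ is obvious; the nontrivial inclusion $\mF^{\Alg}_0(C) \cap C_R \subset C_{R \cap \{i \leq 0\}}$ is where the basis-level argument is essential, because at the level of abstract lattice operations $\falg{0} \cap \sum_{(i,j)\in R}(\falg{i}\cap \falex{j})$ need not commute with the sum without choosing a basis.

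The second equality then follows by applying the first equality to the closed region $R' := R \cap \{i \leq 0\}$: the intersection $R' \cap \{i \leq 0\}$ equals $R'$ itself, so $(\mF^{\Alg}_0(C))_{R'} = \mF^{\Alg}_0(C) \cap C_{R'} = C_{R' \cap \{i \leq 0\}} = C_{R \cap \{i \leq 0\}}$. The main technical obstacle is the one mentioned above, namely establishing the monomial-level description of $C_R$ carefully enough to conclude that intersecting $C_R$ with $\falg{0}$ is the same as restricting the index set from $R$ to $R \cap \{i \leq 0\}$; once this is set up, both claimed equalities reduce to tautologies on coordinates.
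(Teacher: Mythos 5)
Your proof is correct and takes essentially the same route as the paper: the paper invokes a direct sum decomposition $C = \bigoplus_{(i,j)} C_{(i,j)}$ with $C_R = \bigoplus_{(i,j)\in R} C_{(i,j)}$ (citing an earlier lemma), which is precisely the monomial-level characterization you derive by hand from the filtered basis of property (5). The only difference is presentational — you unpack the cited decomposition lemma rather than quote it.
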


\begin{proof}
By \cite[Lemma 2.9]{2019arXiv190709116S}, there exists 
a direct decomposition 
\[
C = \bigoplus_{(i,j) \in \Z} C_{(i,j)}
\]
as a $\F$-vector space
such that for any $R \in \CR(\Z^2)$, the equality
$$
C_R = \bigoplus_{(i,j) \in R} C_{(i,j)}
$$
holds.
Now, since $\mF^{\Alg}_0(C)=C_{\{i \leq 0\}}$, it is easy to see that
\begin{eqnarray*}
(\mF^{\Alg}_0(C))_R &=& C_{\{i \leq 0\} } \cap C_R\\
&=&  (\bigoplus_{i \leq 0}C_{(i,j)} ) \cap 
(\bigoplus_{(i,j)\in R}C_{(i,j)} )\\
&=&
\bigoplus_{(i,j)\in R \cap \{i \leq 0\}}C_{(i,j)}
= C_{R \cap \{ i\leq 0\}}.
\end{eqnarray*}
Now, the second equality follows from
$
C_{R \cap \{i \leq 0\}} = (\mF^{\Alg}_0(C))_R \subset \mF^{\Alg}_0(C). 
$
\end{proof}

\begin{lemma}
\label{lem: CFK^- 2}
If $R \in \tG^{(-2s)}_0(\mF^{\Alg}_0(C))$,
then 
there exists an element
$R' \in \G^{(-2s)}_0(C)$ such that
$R' \subset R$ and $\shift(R') \leq 0$.
\end{lemma}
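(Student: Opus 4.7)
The plan is to show that any homological generator $x \in (\mF^{\Alg}_0(C))_R$ of degree $-2s$ is simultaneously a homological generator of $C$ at degree $-2s$ sitting inside $C_{R \cap \{i \leq 0\}}$, after which the conclusion follows by a direct application of \Cref{cor: shifted minimalize}.

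First, I would invoke \Cref{lem: CFK^- 1} to rewrite $(\mF^{\Alg}_0(C))_R = C_{R \cap \{i \leq 0\}}$, so that $x$ is automatically exhibited as an element of $C_{R \cap \{i \leq 0\}}$.

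The key step is to verify that the inclusion $\iota \colon \mF^{\Alg}_0(C) \hookrightarrow C$ induces an isomorphism on $H_{-2s}$ for every $s \geq 0$. Applying property $(7)$ of $CFK^\infty$, the $\Z$-filtered homotopy equivalence $f_{\Alg} \colon C \to \Lambda$ restricts to a filtered homotopy equivalence $\mF^{\Alg}_0(C) \simeq \mF^{\Alg}_0(\Lambda) = \F[U]$, yielding a commutative square
\[
\begin{CD}
\mF^{\Alg}_0(C) @>{\iota}>> C \\
@VV{\simeq}V @VV{\simeq}V \\
\F[U] @>>> \Lambda
\end{CD}
\]
whose bottom horizontal arrow is a quasi-isomorphism at each degree $-2s$ with $s \geq 0$, since both homologies are isomorphic to $\F$, generated by $U^s$, and the inclusion sends generator to generator. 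A diagram chase then shows that the top horizontal arrow is a quasi-isomorphism in degree $-2s$, so $x$ represents a non-zero class in $H_{-2s}(C) \cong \F$ and is therefore a homological generator of $C$ at degree $-2s$.

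With these two ingredients in hand, we obtain $R \cap \{i \leq 0\} \in \tG^{(-2s)}_0(C)$, and \Cref{cor: shifted minimalize} supplies an element $R' \in \G^{(-2s)}_0(C)$ satisfying $R' \subset R \cap \{i \leq 0\} \subset R$. Since $R' \subset \{i \leq 0\}$, the defining formula of $\shift$ immediately yields $\shift(R') \leq 0$. The only mildly subtle point is the diagram chase establishing the quasi-isomorphism in the prescribed degree, but this is essentially immediate from the axioms and poses no real obstacle.
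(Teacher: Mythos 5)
Your proof is correct and follows essentially the same route as the paper's: you use \Cref{lem: CFK^- 1} to identify $(\mF^{\Alg}_0(C))_R$ with $C_{R\cap\{i\le 0\}}$, establish that the inclusion $\mF^{\Alg}_0(C)\hookrightarrow C$ is a quasi-isomorphism in degree $-2s$ via the filtered homotopy equivalence to $(\Lambda,\{\mF^{\Alg}_i(\Lambda)\})$ from property $(7)$, and then apply \Cref{cor: shifted minimalize}. The only cosmetic difference is that you verify ``homological generator maps to homological generator'' directly by a diagram chase, where the paper packages the same observation as an application of \Cref{prop: G_0 deg n}.
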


\begin{proof}
Since 
$(C, \{\mF^{\Alg}_i(C)\}) \heq{\Z}{\F} 
(\Lambda, \{\mF^{\Alg}_i(\Lambda)\})$, 
the inclusion $i \colon \mF^{\Alg}_0(C) \to C$
induces the isomorphism $H_{-2s}(\mF^{\Alg}_0(C)) \cong H_{-2s}(C)$. Moreover, by the definition of the $\Z^2$-filtration on $\mF^{\Alg}_0(C)$, it is obvious that
the map $i$ is $\Z^2$-filtered.
Therefore, the inclusion $i \colon \mF^{\Alg}_0(C) \to C$ 
is a $\Z^2$-filtered quasi-isomorphism at degree $-2s$, 
and hence Proposition~\ref{prop: G_0 deg n} gives
$$
\tG^{(-2s)}_0(\mF^{\Alg}_0(C)) \subset 
\tG^{(-2s)}_0(C).
$$
Now, let $R \in \tG^{(-2s)}_0(\mF^{\Alg}_0(C))$.
Then, Lemma~\ref{lem: CFK^- 1} and the above arguments imply that
$$
R \cap \{i \leq 0\} \in \tG^{(-2s)}_0(\mF^1_0(C))
\subset \tG^{(-2s)}_0(C).
$$
Therefore, by Corollary~\ref{cor: shifted minimalize}, 
there exists an element $R' \in \G^{(-2s)}_0(C)$
such that $R' \subset R \cap \{i \leq 0\}$.
Obviously, we have $R' \subset R$ and
$$
\shift(R') \leq \shift(R \cap \{i \leq 0\}) \leq 0.  
$$
\end{proof}

\begin{lemma}
\label{lem: CFK^- 3}
If $R \in \G^{(-2s)}_0(C)$ and $\shift(R) \leq 0$,
then $R \in \tG^{(-2s)}_0(\mF^{\Alg}_0(C))$.
\end{lemma}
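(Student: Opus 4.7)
The hypothesis $\shift(R) \leq 0$ says precisely that $R \subset \{i \leq 0\}$, so $R \cap \{i \leq 0\} = R$. My plan is to take a witness $x$ for the membership $R \in \tG_0^{(-2s)}(C)$, show it already lives in $(\mF^{\Alg}_0(C))_R$, and then verify that its homology class is still nontrivial in the subcomplex.

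First, since $R \in \G_0^{(-2s)}(C) \subset \tG_0^{(-2s)}(C)$, there is a homological generator $x \in C_R$ of degree $-2s$. By \Cref{lem: CFK^- 1},
\[
(\mF^{\Alg}_0(C))_R = C_{R \cap \{i \leq 0\}} = C_R,
\]
where the last equality uses $R \subset \{i \leq 0\}$. Hence $x \in (\mF^{\Alg}_0(C))_R$, and it is automatically a cycle of degree $-2s$ in $\mF^{\Alg}_0(C)$ (the boundary and grading are inherited from $C$).

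It remains to check that $[x] \neq 0$ in $H_{-2s}(\mF^{\Alg}_0(C))$. As observed in the proof of \Cref{lem: CFK^- 2}, the seventh property of $CFK^\infty$ gives $(C, \{\mF^{\Alg}_i(C)\}) \heq{\Z}{\F} (\Lambda, \{\mF^{\Alg}_i(\Lambda)\})$, which forces the inclusion $i \colon \mF^{\Alg}_0(C) \hookrightarrow C$ to induce an isomorphism on $H_{-2s}$. Since $x$ is a homological generator in $C$ by hypothesis, $[x] \neq 0$ in $H_{-2s}(C)$, and therefore $[x] \neq 0$ in $H_{-2s}(\mF^{\Alg}_0(C))$ as well. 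This exhibits $x$ as a homological generator of degree $-2s$ inside $(\mF^{\Alg}_0(C))_R$, so $R \in \tG_0^{(-2s)}(\mF^{\Alg}_0(C))$.

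There is no real obstacle here; the statement is essentially a converse bookkeeping to \Cref{lem: CFK^- 2}, and the only thing one must be careful about is that the filtration on the subcomplex $\mF^{\Alg}_0(C)$ is induced from $C$, which is exactly the content of \Cref{lem: CFK^- 1}. Once that identification is in place, the quasi-isomorphism of the inclusion at the relevant degree does the rest.
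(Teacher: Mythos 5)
Your proof is correct and follows essentially the same line as the paper's: identify $(\mF^{\Alg}_0(C))_R$ with $C_R$ using $\shift(R) \leq 0$ (equivalently $R\subset\{i\leq 0\}$), and then observe that a degree-$(-2s)$ homological generator $x\in C_R$ also represents a homological generator inside the subcomplex. The paper reaches the identification via $x\in C_R\subset C_{\{i\leq 0\}}=\mF^{\Alg}_0(C)$ and then $\mF^{\Alg}_0(C)\cap C_R=(\mF^{\Alg}_0(C))_R$, whereas you quote \Cref{lem: CFK^- 1} directly; and you spell out the final verification that $[x]\neq 0$ in $H_{-2s}(\mF^{\Alg}_0(C))$, which the paper leaves implicit — both are fine and amount to the same argument.
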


\begin{proof}
Note that for any $R \in \CR(\Z^2)$, 
the inequality $\shift(R) \leq 0$ holds
if and only if $R \subset \{i \leq 0\}$.
Now, let $R \in \G^{(-2s)}_0(C)$ and $\shift(R) \leq 0$.
Then there exists
a homological generator $x \in C$ of degree $-2s$
which satisfies
$$
x \in C_{R} \subset C_{\{i \leq 0\}} = \mF^{\Alg}_0(C).
$$
In particular, 
$x \in \mF^{\Alg}_0(C) \cap C_R = (\mF^{\Alg}_0(C))_R$,
and hence we can regard $x$ as a homological generator 
of degree $-2s$ for $\mF^{\Alg}_0(C)$ which lies in $(\mF^{\Alg}_0(C))_R$.
This shows $R \in \tG^{(-2s)}_0(\mF^{\Alg}_0(C))$.
\end{proof}
Now, let us prove \Cref{thm: shifted G_0 for CFK^-}.
\def\proofname{Proof of \Cref{thm: shifted G_0 for CFK^-}}
\begin{proof}
For any $R \in \CR^{ss}(\Z^2)$, we see that
$$
\shift(R) \leq s \Leftrightarrow \shift(R[s]) \leq 0.
$$
Hence, Proposition~\ref{prop: shifted G_0 for CFK}
implies
\begin{eqnarray*}
\{R[s] \mid R\in \G_0(K), \ \shift(R) \leq s\}
&=&
\{R \in \G_0(K)[s] \mid  \shift(R) \leq 0\}\\
&=&
\{R \in \G^{(-2s)}_0(C) \mid  \shift(R) \leq 0\}.
\end{eqnarray*}
Therefore, it suffices to prove that
$$
\G_0^{(-2s)}(\mF^{\Alg}_0(C))
=\{R \in \G^{(-2s)}_0(C) \mid  \shift(R) \leq 0\}.
$$
We first suppose that 
$R \in \G_0^{(-2s)}(\mF^{\Alg}_0(C))$.
Then $R \in \tG_0^{(-2s)}(\mF^{\Alg}_0(C))$,
and hence Lemma~\ref{lem: CFK^- 2} gives an element
$R' \in \G_0^{(-2s)}(C)$ such that
$R' \subset R$ and $\shift(R') \leq 0$.
Moreover, Lemma~\ref{lem: CFK^- 3} shows that
$R'\in \tG_0^{(-2s)}(\mF^{\Alg}_0(C))$.
By the minimality of $R$ in
$\tG_0^{(-2s)}(\mF^{\Alg}_0(C))$,
we must have $R=R' \in \G_0^{(-2s)}(C)$.

Next, we suppose that 
$R \in \G_0^{(-2s)}(C)$ and 
$\shift(R) \leq 0$.
Then, Lemma~\ref{lem: CFK^- 3}
shows that 
$R \in \tG_0^{(-2s)}(\mF^{\Alg}_0(C))$.
Assume that $R' \in \tG_0^{(-2s)}(\mF^{\Alg}_0(C))$ 
satisfies $R' \subset R$.
Then, 
Lemma~\ref{lem: CFK^- 2} gives an element
$R'' \in \G_0^{(-2s)}(C) \subset \tG_0^{(-2s)}(C)$ such that
$R'' \subset R' \subset R$.
By the minimality of $R$ in $\tG_0^{(-2s)}(C)$,
we must have $R''=R'=R$. This implies that 
$R \in \G_0^{(-2s)}(\mF^{\Alg}_0(C))$.
\end{proof}
\def\proofname{Proof}

As a corollary of \Cref{thm: shifted G_0 for CFK^-}, we see that the sequence
\[\{\G_0^{(-2s)}(CFK^-(K))\}_{s=0}^\infty
\]
converges
to shifted $\G_0(K)$.
\begin{corollary}
For any $s \geq \max\{\shift (R) \mid R \in \G_0(K)\}$,
we have
\[
\G_0^{(-2s)}(CFK^-(K))= \G_0(K)[s].
\]
\end{corollary}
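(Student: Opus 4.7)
The plan is to derive the corollary as an immediate consequence of the preceding Theorem~\ref{thm: shifted G_0 for CFK^-}, which furnishes the general formula
\[
\G_0^{(-2s)}(CFK^-(K)) = \{R[s] \mid R \in \G_0(K),\ \shift(R) \leq s\}.
\]
Once the hypothesis $s \geq \max\{\shift(R) \mid R \in \G_0(K)\}$ is in hand, the side condition $\shift(R) \leq s$ becomes vacuous across all $R \in \G_0(K)$, so the set on the right collapses to $\{R[s] \mid R \in \G_0(K)\}$, which is exactly $\G_0(K)[s]$ by the definition of the bracket operation on subsets of $\CR(\Z^2)$.

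Before applying the theorem, I would take a moment to verify that the maximum in the hypothesis is a well-defined integer so that the statement is meaningful. Theorem~\ref{thm: G'_0} guarantees that $\G_0(K)$ is non-empty and finite, so the maximum is taken over a non-empty finite set; and Proposition~\ref{prop: realizer} gives $\G_0(K) \subset \CR^{ss}(\Z^2)$, so for each $R \in \G_0(K)$ the shift $\shift(R)$ is a finite integer (as noted in the paragraph introducing $\shift$). Hence $\max\{\shift(R) \mid R \in \G_0(K)\}$ is a finite integer, and any $s$ no smaller than it is necessarily non-negative when the maximum is non-negative; in fact, to invoke Theorem~\ref{thm: shifted G_0 for CFK^-} we also need $s \in \Z_{\geq 0}$, which I would note holds since whenever the maximum is negative one still takes $s \geq 0$ (and the conclusion is unaffected, as the condition $\shift(R) \leq s$ is still vacuous).

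There is essentially no obstacle: the content has already been absorbed into the main theorem, and the corollary is just the observation that the filtering condition in the theorem becomes trivial once $s$ is large enough. I would present the proof in two short sentences, one invoking Theorem~\ref{thm: shifted G_0 for CFK^-} to rewrite the left-hand side, and one noting that the hypothesis on $s$ kills the constraint so that the right-hand side simplifies to $\G_0(K)[s]$.
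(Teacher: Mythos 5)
Your proposal is correct and matches the approach the paper leaves implicit: the corollary follows directly from Theorem~\ref{thm: shifted G_0 for CFK^-} once one observes that the constraint $\shift(R) \leq s$ is vacuous for $s$ at least the maximum shift. Your side remark about ensuring $s \in \Z_{\geq 0}$ is reasonable but in fact automatic: by Proposition~\ref{prop:candidate-cond} every $R \in \G_0(K)$ contains $(0, -g_4)$ (or simply by Theorem~\ref{thm: minimalize} together with $R_{(0,0)}[t] \in \tG_0(K)$ for $t \gg 0$), so $\shift(R) \geq 0$ and hence the maximum is already non-negative.
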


    \section{Translating $\G_0$ into $C^-(G)$} 
\label{sec: translating G_0 to C^-(G)}
In this section, we recall the grid complex $C^-(G)$ and its relationship to $CFK^-$. 
Combining such arguments with \Cref{thm: shifted G_0 for CFK^-}, 
we prove that 
$\G_0(K)$ is determined from $\G^{(-2s)}_0(C^-(G))$
for sufficiently large $s \in \Z_{\geq 0}$.

\subsection{A review of grid complexes} 
\label{subsec:grid-cpx}

\textit{Grid homology theory} is a combinatorial description of knot Floer homology theory, introduced by Manolescu, Ozsv\'ath, Sarkar, Szab\'o and Thurston in \cite{MOS09grid1, MOST07grid2}. For any knot $K \subset S^3$, by choosing a \textit{grid diagram} $G$ of $K$ (also called an \textit{arc representation}), one associates the \textit{grid complex} $C^-(G)$ which is
filtered homotopy equivalent to $CFK^-(K)$. Here we briefly review the construction.

\begin{figure}[t]
    \centering
    \input{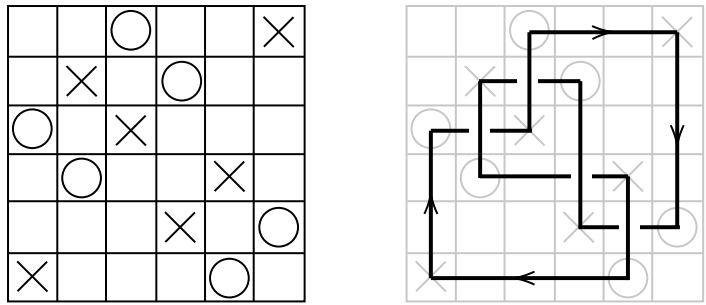}
    \caption{Grid diagram and the corresponding knot}
\end{figure}

A \textit{grid diagram} $G$ lies on an $n \times n$ grid of squares, where some squares are decorated either with an $O$ or an $X$ so that
\begin{itemize}
    \item every row contains exactly one $O$ and one $X$;
    \item every column contains exactly one $O$ and one $X$.
\end{itemize}
The number $n$ is called the \textit{grid number} of $G$. Given such data, one obtains a planar link diagram by drawing horizontal segments from the $O$'s to the $X$'s in each row, and vertical segments from the $X$'s to the $O$'s in each column, while letting the horizontal segment underpass the vertical segment at every intersection point. It is also true that every link in $S^3$ possesses such representation. We usually place the diagram in the standard plane so that bottom left corner is at the origin, each square has length one, and each $O$ and each $X$ is centered at a half-integer point. We set $\mathbb{O} = \{O_i\}_{1 \leq i \leq n},\ \mathbb{X} = \{X_i\}_{1 \leq i \leq n}$ so that each $O_i$ and $X_i$ has its center in $x = i - \frac{1}{2}$. 

Given a grid diagram $G$, the chain complex $C^-(G)$ is constructed as follows. First we regard $G$ as a diagram on the torus by gluing the two opposite sides. The generating set $S$ is given by $n$-tuples of intersection points between the horizontal and vertical circles, with the property that each horizontal (or vertical) circle contains exactly one intersection point. By fixing the bottom left corner of the grid, each $\x \in S$ can be identified with the set of permutations of length $n$ under the correspondence
\[
    \sigma \mapsto \x = \{ (i, \sigma(i)) \}_{0 \leq i < n }.
\]

$C^-(G)$ is generated by $S$ over the multivariate polynomial ring $\F[U_1, \cdots, U_n]$. The differential $\partial$ is defined as 
\[
    \partial(\x) = \sum_{\y \in S}\sum_{r \in \mathrm{Rect}^\circ (\x, \y)} U_1^{\epsilon_1} \cdots U_n^{\epsilon_n} \mathbf{y}
\]
where $\mathrm{Rect}^\circ(\x, \y)$ denotes the set of \textit{empty rectangles} connecting $\x$ to $\y$ (which exist only when $\x$ and $\y$ are related by a single transposition), and for each empty rectangle $r$, the exponent $\epsilon_i \in \{0, 1\}$ is given by the number of intersections of $r$ and $O_i$. \Cref{fig:grid_diffential} depicts an empty rectangle $r$ connecting $\x$ to $\y$, and for this $r$ we have $\epsilon_4 = 1$. See \cite{MOST07grid2} for the precise definition.

\begin{figure}[t]
    \centering
    \input{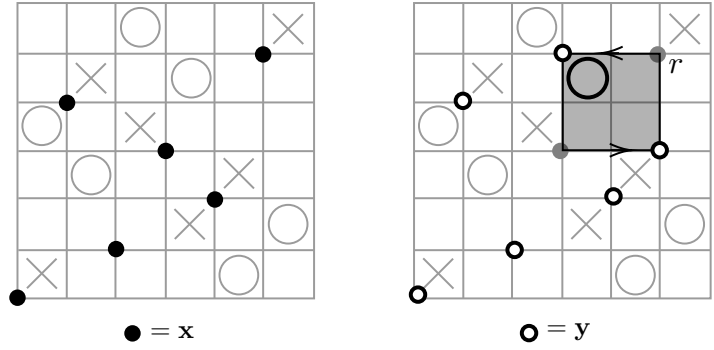}
    \caption{An empty rectangle connecting $\x$ to $\y$}
    \label{fig:grid_diffential}
\end{figure}

$C^-(G)$ is endowed the \textit{Maslov grading} and the \textit{Alexander grading} as follows. If $P$ and $Q$ are sets of finitely many points in $\R^2$, we define $I(P, Q)$ by the number of pairs $p \in P$ and $q \in Q$ with $p < q$. We symmetrize this function as
\[
    J(P, Q) := I(P, Q) + I(Q, P).
\]
Here $J$ is extended bilinearly over formal sums of finite subsets of $\R^2$. For any $\x \in S$, we define
\begin{align*}
    M(\x) &:= J(\x - \mathbb{O}, \x - \mathbb{O}) + 1, \\
    A(\x) &:= J(\x - \frac{1}{2}(\mathbb{X} + \mathbb{O}), \mathbb{X} - \mathbb{O}) - \frac{n - 1}{2}.
\end{align*}{}
We also declare that each factor $U_i$ decreases $M$ by $-2$ and $A$ by $-1$. Then it follows that $\partial$ decreases $M$ by $-1$, while $A$ is non-increasing under $\partial$. Thus $M$ gives a homological grading of $C^-(G)$ and $A$ gives a $\z$-filtration $\{\falex{j}\}$ on $C^-(G)$. We regard $C^-(G)$ as a $\Z$-filtered complex over $\F[U]$, where the action by $U$ is defined to be multiplication by $U_1$.

\begin{theorem}[{\cite[Theorem 3.3]{MOS09grid1}}]
\label{thm: Z-filtered C^-(G)}
    If $G$ is a grid diagram of a knot $K$, then
    we have
\[(C^-(G),\falex{j}) 
\heq{\z}{\F[U]} (CFK^-(K),\falex{j}).
\]
\end{theorem}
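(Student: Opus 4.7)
The plan is to follow the original argument of Manolescu--Ozsv\'ath--Sarkar and identify the toroidal grid diagram $G$ with a genus $n-1$ multi-pointed Heegaard diagram $\mathcal{H}=(\Sigma, \boldsymbol{\alpha}, \boldsymbol{\beta}, \mathbb{O}, \mathbb{X})$ for the pair $(S^3, K)$. Here $\Sigma$ is the torus obtained by gluing opposite sides of the grid, the $\alpha$-curves are the $n$ horizontal circles, the $\beta$-curves are the $n$ vertical circles, and the $O$'s and $X$'s play the role of the basepoints $w_i$ and $z_i$. Under this identification, a grid state $\mathbf x \in S$ corresponds exactly to an intersection point in $\boldsymbol{\alpha} \cap \boldsymbol{\beta}$ inside $\mathrm{Sym}^n(\Sigma)$, so the generators of $C^-(G)$ match the generators of $CFK^-(\mathcal{H})$ over $\F[U_1,\dots,U_n]$.

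The first nontrivial step is to match the differentials. For a suitable complex structure on $\Sigma$, one shows that the moduli space of index-$1$ holomorphic Whitney disks connecting $\mathbf x$ to $\mathbf y$ is non-empty exactly when $\mathbf x$ and $\mathbf y$ differ by a single transposition along an empty rectangle $r \in \mathrm{Rect}^\circ(\mathbf x,\mathbf y)$, and moreover each such $r$ contributes a unique holomorphic representative (up to the $\R$-action). This is the heart of the argument: applying the Riemann mapping theorem identifies the moduli space with a point, and the multiplicities $n_{O_i}(r) = \epsilon_i$ give precisely the $U_1^{\epsilon_1}\cdots U_n^{\epsilon_n}$ weighting. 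Next one verifies that the combinatorial formulas $M(\mathbf x)$ and $A(\mathbf x)$ agree with the Maslov grading and the Alexander filtration defined in $CFK^-$, which reduces to a computation of the $J$-pairing against the relative Spin$^c$ structure determined by $(\mathbb{O}, \mathbb{X})$.

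At this point one obtains an isomorphism as $\z$-filtered chain complexes over $\F[U_1, \dots, U_n]$ between $C^-(G)$ and a multi-pointed knot Floer complex. To descend to $\F[U]$ via the identification $U = U_1$, the key algebraic input is that the actions of $U_i$ and $U_j$ are $\z$-filtered chain homotopic on $C^-(G)$ whenever the corresponding basepoints lie on the same link component, via an explicit chain homotopy counting rectangles containing a single $X$ marking. Since $K$ is connected, this homotopy can be iterated to collapse all $U_i$ to $U_1$ up to $\z$-filtered chain homotopy, and standard destabilization/stabilization invariance of multi-pointed Heegaard Floer homology shows the resulting $\F[U]$-complex is $\z$-filtered homotopy equivalent to the singly-pointed complex $CFK^-(K)$.

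The main obstacle is the holomorphic curve count identifying index-$1$ disks with empty rectangles, since this requires working inside $\mathrm{Sym}^n(\Sigma)$ and carefully controlling the boundary degenerations; everything else is a matter of tracking gradings and assembling filtered homotopy equivalences. Once this geometric input is taken as a black box, the filtered equivalence over $\F[U]$ follows by purely algebraic manipulations using \Cref{exact} and the invariance properties summarized in Subsection~\ref{subsubsec: CFK}.
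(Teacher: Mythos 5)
The paper cites this as an external result (Manolescu--Ozsv\'ath--Sarkar, \cite[Theorem 3.3]{MOS09grid1}) without supplying its own proof, so there is no in-paper argument to compare against; your task was effectively to sketch the MOS argument, and your sketch does follow that argument. The identification of the toroidal grid with a genus-one multi-pointed Heegaard diagram, the Riemann-mapping-theorem count showing that index-$1$ holomorphic disks correspond exactly to empty rectangles with the correct $U_i$-weights, and the verification that $M$ and $A$ agree with the geometric Maslov grading and Alexander filtration are all presented correctly.

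One remark on the final reduction: the passage from the multi-pointed $\F[U_1,\ldots,U_n]$-complex to the singly-pointed $\F[U]$-complex is usually organized via free (de)stabilization rather than ``iterating the $U_i \simeq U_j$ homotopy to collapse all $U_i$ to $U_1$.'' As written this step is a bit loose: the chain homotopy $U_i \simeq U_j$ is the key lemma, but by itself it does not produce the claimed $\Z$-filtered homotopy equivalence over $\F[U]$. What one actually does is show that each extra basepoint pair changes the complex (as a $\Z$-filtered $\F[U]$-complex with $U=U_1$) by tensoring with a two-step acyclic filtered piece, or equivalently constructs an explicit filtered homotopy equivalence between the complexes of a singly-pointed diagram and its stabilization, and then iterates. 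This is the precise form of the ``destabilization invariance'' you invoke; it would be worth making that explicit rather than phrasing it as a collapse of variables. Apart from this bookkeeping point, your outline is accurate.
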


\subsection{Computing $\G_0(K)$ from $C^-(G)$} \label{subsec:G_0-from-grid-cpx}

Here we show that the $\z$-filtered homotopy equivalence
stated in \Cref{thm: Z-filtered C^-(G)}
induces the $\z^2$-filtered homotopy equivalence between $C^-(G)$
and $CFK^-(K)$.
To show that, we introduce
the {\it algebraic filtration}
$\{\falg{i}\}$ on $C^-(G)$
as follows:
For any $i \in \z$, the $i$-th subcomplex $\falg{i}$
is defined to be the $\F[U]$-submodule generated by elements of the form 
$U_1^{a_1} \cdots U_N^{a_N} \x
\in C^-(G)$
with $a_1 \geq -i$.
By the definition of the differential on $C^-(G)$,
it is obvious that $\{\falg{i}\}$ is 
a $\z$-filtration on $C^-(G)$.
Moreover, we see that 
\[
U \cdot \falg{i} = \falg{i-1}
\]
for any $i \leq 0$,
and 
\[
\falg{i} = C^-(G)
\]
for any $i \geq 0$. (Note that $CFK^-(K)$ also has the same property.)
We use the $\z^2$-filtration
induced by $(\{\falg{i}\},\{\falex{j}\})$
to regard $C^-(G)$
as a $\z^2$-filtered complex
over $\F[U]$. 
Then, we have the following.
\begin{corollary}
\label{cor: Z^2-filtered C^-(G)}
    If $G$ is a grid diagram of a knot $K$, then
    we have
\[
C^-(G) 
\heq{\z^2}{\F[U]} CFK^-(K).
\]
\end{corollary}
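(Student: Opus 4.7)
The plan is to promote the $\Z$-filtered (Alexander) homotopy equivalence from \Cref{thm: Z-filtered C^-(G)} to a $\Z^2$-filtered one by observing that the algebraic filtration is automatically preserved by any $\F[U]$-linear map, so no new work is needed beyond what Manolescu--Ozsv\'ath--Sarkar already supply.

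First I would unpack \Cref{thm: Z-filtered C^-(G)}: it provides chain maps $f \colon C^-(G) \to CFK^-(K)$ and $g \colon CFK^-(K) \to C^-(G)$ together with chain homotopies $H_1, H_2$ between $fg, gf$ and the identities, all four of which are $\F[U]$-linear, grading-preserving, and $\Z$-filtered with respect to the Alexander filtration $\{\falex{j}\}$. By \Cref{cor: Z^2-filtered C^-(G)} it suffices to show that these same four maps are also $\Z$-filtered with respect to the algebraic filtration $\{\falg{i}\}$, because a map that is filtered with respect to both $\Z$-filtrations factorwise sends $\falg{i}\cap \falex{j}$ into $\falg{i}\cap \falex{j}$, and hence sends $\sum_{(i,j)\in R}\falg{i}\cap\falex{j}$ into itself for every closed region $R$.

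The key structural observation is that on both sides the algebraic filtration has an especially simple form: for $i\geq 0$ the subcomplex $\falg{i}$ equals the whole complex, while for $i<0$ one has $\falg{i} = U^{-i}\cdot \falg{0}$. For $C^-(G)$ this is exactly what is written just above the corollary (using that $U$ acts by $U_1$), and for $CFK^-(K)=\falg{0}(CFK^\infty(K))$ it follows from property (4) in the axiomatization of $CFK^\infty$. Given this, any $\F[U]$-linear map $\varphi$ between the two complexes automatically satisfies $\varphi(\falg{i})\subset \falg{i}$: the case $i\geq 0$ is trivial, and for $i<0$ we compute $\varphi(U^{-i} x) = U^{-i}\varphi(x) \in U^{-i}\cdot C' = \falg{i}$. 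Applying this to $f,g,H_1,H_2$ completes the promotion.

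The only point that requires attention is verifying that the maps delivered by \Cref{thm: Z-filtered C^-(G)} really are $\F[U]$-linear, not merely $\F$-linear; this is built into the notation $\heq{\z}{\F[U]}$ so there is nothing to check beyond reading the definitions from \Cref{poset filtered chain complexes}. I do not anticipate any genuine obstacle: the whole argument is essentially the remark that on complexes where the algebraic filtration is generated by powers of $U$, $\F[U]$-linearity forces algebraic-filtration preservation for free.
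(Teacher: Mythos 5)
Your argument is essentially identical to the paper's: both observe that $\falg{i}(C)=C$ for $i\geq 0$ and $\falg{i}(C)=U^{-i}\cdot\falg{0}(C)$ for $i<0$, so any $\F[U]$-linear map automatically preserves the algebraic filtration, whence the four maps of the $\Z$-filtered equivalence upgrade to a $\Z^2$-filtered one. One small slip: you cite \Cref{cor: Z^2-filtered C^-(G)} (the corollary being proved) for the fact that preserving both $\Z$-filtrations implies $\Z^2$-filteredness, but that is simply the definition of the induced $\Z^2$-filtration from Section~\ref{poset filtered chain complexes}.
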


\begin{proof}
Let $C$ and $C'$ be either one of
$C^-(G)$ and $CFK^-(K)$, 
and 
suppose that $f \colon C \to C'$
is a map over $\F[U]$. 
Then, obviously we have
\[
f(\falg{i}(C)) 
\subset C' = \falg{i}(C')
\]
if $i \geq 0$.
Moreover, if $i < 0$, then
\begin{eqnarray*}
f(\falg{i}(C)) &=& 
f(U^{-i} \cdot\falg{0}(C)) \\
&=&
U^{-i} \cdot f(\falg{0}(C)) \subset
U^{-i} \cdot \falg{0}(C') = \falg{i}(C').
\end{eqnarray*}
Now, it is easy to see that a $\z$-filtered homotopy equivalence map given by 
\Cref{thm: Z-filtered C^-(G)} 
induces $C^-(G) \heq{\z^2}{\F[U]} CFK^-(K)$.
\end{proof}


Now, combining \Cref{cor: Z^2-filtered C^-(G)}
with \Cref{prop: G_0 deg n} and \Cref{thm: shifted G_0 for CFK^-}, we have the following.
\begin{theorem}
\label{thm: G_0 and C^-(G)}
For any knot $K$, grid diagram $G$ for $K$ and $s \in \Z_{\geq 0}$,
we have
$$
\G_0^{(-2s)}(C^-(G)) = \{R[s] \mid R \in \G_0(K), \  \shift(R) \leq s\}. 
$$
\end{theorem}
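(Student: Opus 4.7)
The plan is to chain together the two main structural results already in place: the $\z^2$-filtered homotopy equivalence $C^-(G) \heq{\z^2}{\F[U]} CFK^-(K)$ from \Cref{cor: Z^2-filtered C^-(G)}, and the identification of $\G_0^{(-2s)}(CFK^-(K))$ with shifted realizable regions of $K$ coming from \Cref{thm: shifted G_0 for CFK^-}.

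First, I would verify that $\G_0^{(-2s)}(C^-(G))$ is well-defined for every $s \in \Z_{\geq 0}$, i.e.\ that $H_{-2s}(C^-(G)) \cong \F$. This is immediate from \Cref{cor: Z^2-filtered C^-(G)} together with the homology computation for $CFK^-(K)$ recalled at the start of Section~3.2 (which follows from the seventh axiom of $CFK^\infty$). Next, I would restrict scalars from $\F[U]$ to $\F$ in \Cref{cor: Z^2-filtered C^-(G)}, so that the chain homotopy equivalence and the maps realizing it are in particular $\z^2$-filtered over $\F$. Applying \Cref{prop: G_0 deg n} then yields the key intermediate equality
\[
\G_0^{(-2s)}(C^-(G)) = \G_0^{(-2s)}(CFK^-(K)).
\]

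Finally, I would invoke \Cref{thm: shifted G_0 for CFK^-} to rewrite the right-hand side as $\{R[s] \mid R \in \G_0(K),\ \shift(R) \leq s\}$, completing the proof.

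There is essentially no serious obstacle: the difficult work — relating shifted $\G_0$'s of $CFK^-$ back to $\G_0$ of $CFK^\infty$, and establishing the passage from the grid complex to $CFK^-$ as a $\z^2$-filtered equivalence rather than merely a $\z$-filtered one — has already been carried out in \Cref{thm: shifted G_0 for CFK^-} and \Cref{cor: Z^2-filtered C^-(G)}. The only conceptual point is noticing that \Cref{prop: G_0 deg n}, stated for $\z^2$-filtered complexes over $\F$, applies to our equivalence over $\F[U]$ simply by forgetting the $U$-action; once this observation is made, the theorem follows by concatenation.
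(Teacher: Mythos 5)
Your proposal is correct and matches the paper's argument exactly: the paper derives \Cref{thm: G_0 and C^-(G)} precisely by combining \Cref{cor: Z^2-filtered C^-(G)}, \Cref{prop: G_0 deg n}, and \Cref{thm: shifted G_0 for CFK^-}. The only difference is that you have spelled out the routine intermediate observations (well-definedness of $\G_0^{(-2s)}$ on the grid side and the restriction of scalars from $\F[U]$ to $\F$) that the paper leaves implicit.
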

We also have the following convergence theorem.
\begin{corollary}
For any $s \geq \max\{\shift (R) \mid R \in \G_0(K)\}$,
we have
\[
\G_0^{(-2s)}(C^-(G))= \G_0(K)[s].
\]
\end{corollary}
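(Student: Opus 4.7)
The plan is to obtain this as an essentially immediate consequence of Theorem~\ref{thm: G_0 and C^-(G)}. The first thing I would check is that the quantity $\max\{\shift(R) \mid R \in \G_0(K)\}$ is a well-defined integer, so that the hypothesis on $s$ is meaningful. This requires two ingredients already in hand: by Theorem~\ref{thm: G'_0}, the set $\G_0(K)$ is non-empty and finite, and by Proposition~\ref{prop: realizer} every $R \in \G_0(K)$ lies in $\CR^{ss}(\Z^2)$, so by the remark following the definition of $\shift$ in Section~\ref{range}, each such $\shift(R)$ is a finite integer. A maximum of finitely many integers is an integer, so the hypothesis on $s$ makes sense.

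With that in place, the deduction is direct. For $s \geq \max\{\shift(R) \mid R \in \G_0(K)\}$, every $R \in \G_0(K)$ automatically satisfies $\shift(R) \leq s$, so the side condition appearing in Theorem~\ref{thm: G_0 and C^-(G)} is vacuous. Hence
\[
\{R[s] \mid R \in \G_0(K),\ \shift(R) \leq s\} = \{R[s] \mid R \in \G_0(K)\} = \G_0(K)[s],
\]
and plugging this identification into the equality provided by Theorem~\ref{thm: G_0 and C^-(G)} gives $\G_0^{(-2s)}(C^-(G)) = \G_0(K)[s]$, which is the desired conclusion.

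There is no genuine obstacle here: the substantive work has already been absorbed into Theorem~\ref{thm: G_0 and C^-(G)} (and, further upstream, into Theorem~\ref{thm: shifted G_0 for CFK^-} and Corollary~\ref{cor: Z^2-filtered C^-(G)}). The only point to verify is the finiteness of $\max\{\shift(R) \mid R \in \G_0(K)\}$, which is a bookkeeping exercise combining the finiteness of $\G_0(K)$ with the finiteness of $\shift$ on semi-simple regions.
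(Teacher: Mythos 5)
Your argument is correct and is exactly the intended (and implicit) proof: the paper states this corollary without a separate proof block precisely because it follows at once from Theorem~\ref{thm: G_0 and C^-(G)} once the side condition $\shift(R) \leq s$ becomes vacuous. Your preliminary check that $\max\{\shift(R) \mid R \in \G_0(K)\}$ is a well-defined integer (via Theorem~\ref{thm: G'_0}, Proposition~\ref{prop: realizer}, and the finiteness of $\shift$ on $\CR^{ss}(\Z^2)$) is the right bookkeeping to make the hypothesis on $s$ meaningful.
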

Moreover, we have the following corollary,
which enables us to check the realizability of each closed region with finite shift number via $C^-(G)$.
\begin{corollary}
\label{cor: tG_0 and C^-(G)}
For any closed region $R$ with $\shift(R) \leq s$,
we have
$$
R \in \tG_0(K) \Leftrightarrow
R[s]\in \tG_0^{(-2s)}(C^-(G)).
$$
\end{corollary}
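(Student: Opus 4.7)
The plan is to chain together \Cref{thm: G_0 and C^-(G)}, \Cref{cor: shifted minimalize}, and \Cref{thm: minimalize} to obtain two parallel characterizations of the two sides and then match them using the hypothesis $\shift(R) \leq s$.

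First I would unpack the right-hand side. By \Cref{cor: shifted minimalize} applied to the $\Z^2$-filtered complex $C^-(G)$, the condition $R[s] \in \tG_0^{(-2s)}(C^-(G))$ is equivalent to the existence of some $R'' \in \G_0^{(-2s)}(C^-(G))$ with $R'' \subset R[s]$. Next, substituting the explicit description of $\G_0^{(-2s)}(C^-(G))$ furnished by \Cref{thm: G_0 and C^-(G)}, this becomes: there exists $R' \in \G_0(K)$ with $\shift(R') \leq s$ such that $R'[s] \subset R[s]$, which (since the shift operation on closed regions is inclusion-preserving and invertible) is the same as $R' \subset R$.

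Second I would unpack the left-hand side. By \Cref{thm: minimalize}, $R \in \tG_0(K)$ is equivalent to the existence of some $R' \in \G_0(K)$ with $R' \subset R$. So what remains is to show that under the assumption $\shift(R) \leq s$, the condition
\[
\exists R' \in \G_0(K),\ R' \subset R
\]
is equivalent to the a priori stronger condition
\[
\exists R' \in \G_0(K),\ R' \subset R \text{ and } \shift(R') \leq s.
\]
The nontrivial direction (forward) uses the monotonicity of $\shift$ noted in \Cref{sec: translate G_0}: if $R' \subset R$ then $\shift(R') \leq \shift(R) \leq s$, so any witness $R'$ automatically satisfies the shift bound for free. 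The reverse direction is immediate.

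I do not expect a real obstacle here; the only subtle point is the careful bookkeeping of how the shift hypothesis on $R$ propagates to any $R' \in \G_0(K)$ contained in $R$, which is precisely what guarantees compatibility between the finite-level data $\G_0^{(-2s)}(C^-(G))$ visible from $C^-(G)$ and the (potentially much larger) set $\tG_0(K)$.
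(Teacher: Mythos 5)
Your forward direction matches the paper's proof: use \Cref{thm: minimalize} to produce $R' \in \G_0(K)$ with $R' \subset R$, observe $\shift(R') \leq \shift(R) \leq s$, apply \Cref{thm: G_0 and C^-(G)} to get $R'[s] \in \G_0^{(-2s)}(C^-(G))$, and conclude $R[s] \in \tG_0^{(-2s)}(C^-(G))$ by the trivial (upward-closure) direction.

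The backward direction has a citation problem. You invoke \Cref{cor: shifted minimalize} ``applied to the $\Z^2$-filtered complex $C^-(G)$,'' but that corollary, as stated and proved in the paper, concerns only $C = CFK^\infty(K)$; it is a consequence of \Cref{prop: shifted G_0 for CFK}, which relies on the $U$-module structure and global triviality of $CFK^\infty$. You cannot simply substitute $C^-(G)$, since for an arbitrary $\Z^2$-filtered complex the descent from $\tG_0^{(n)}$ to a minimal realizable region is not automatic. The required statement for $C^-(G)$ \emph{is} true, but needs an argument: by \Cref{cor: Z^2-filtered C^-(G)} and \Cref{prop: G_0 deg n}, $R[s] \in \tG_0^{(-2s)}(C^-(G))$ transfers to $R[s] \in \tG_0^{(-2s)}(CFK^-(K))$; then \Cref{lem: CFK^- 2} produces $R' \in \G_0^{(-2s)}(CFK^\infty(K))$ with $R' \subset R[s]$ (and $\shift(R') \leq 0$); finally \Cref{prop: shifted G_0 for CFK} identifies this as $R' = R''[s]$ for some $R'' \in \G_0(K)$, giving $R'' \subset R$ and hence $R \in \tG_0(K)$. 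This is precisely the route the paper's proof takes. So your overall scheme is right, but the single-line appeal to \Cref{cor: shifted minimalize} for $C^-(G)$ conceals the real work, which must pass through $CFK^-(K)$ and $CFK^\infty(K)$ via the lemmas just cited.
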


\begin{proof}
First, suppose that $R \in \tG_0(K)$.
Then, by \Cref{thm: minimalize}, we have 
$R' \in \G_0(K)$ with $R' \subset R$.
In particular, 
the inequalities $\shift(R') \leq \shift (R) \leq s$ hold,
and hence \Cref{thm: G_0 and C^-(G)}
gives $R'[s] \in \G_0^{(-2s)}(C^-(G))$.
This implies that $R[s]\in \tG_0^{(-2s)}(C^-(G))$.

Next, suppose that $R[s] \in \tG_0^{(-2s)}(C^-(G))$.
Then \Cref{cor: Z^2-filtered C^-(G)} gives
$R[s] \in \tG_0^{(-2s)}(CFK^-(K))$.
Moreover, \Cref{lem: CFK^- 2} 
and \Cref{prop: shifted G_0 for CFK} implies
\[
R[s] \in \tG_0^{(-2s)}(CFK^\infty(K))= \tG_0(K)[s].
\]
Therefore, we have $R \in \tG_0(K)$.
\end{proof}
    \section{Algorithm}
\label{sec: algorithm}

Now we describe the algorithm to compute $\mathcal{G}_0(K)$. First we describe the four main procedures which assures that the algorithm exists, and then describe how these procedures are integrated so that the computations are done effectively. Throughout this section we assume that we are given a grid diagram $G = (\mathbb{O}, \mathbb{X})$ of $K$, and denote $C = C^-(G)$.

\subsection{Enumerating the candidate regions}

First we show that there exists a finite set of closed regions that is assured to include $\G_0(K)$. 

\begin{proposition} \label{prop:G_0-torus-knot}
    For any $g \geq 0$,
    \begin{align*}
        \G_0(T_{2, 2g + 1})  &= \{ R_{(i, g - i)} \mid i = 0, \cdots, g \}, \\
        \G_0((T_{2, 2g + 1})^*) &= \{ R_{(-g, 0)} \cup R_{(-g + 1, -1)} \cup \cdots \cup R_{(0, -g)} \}
    \end{align*}
\end{proposition}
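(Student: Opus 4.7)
The plan is to start from the standard staircase model of $CFK^\infty(T_{2,2g+1})$ and read off $\G_0$ directly via \Cref{prop: realizer}, then derive the mirror case from the duality theorem \Cref{thm: duality}. Up to $\Z^2$-filtered homotopy equivalence over $\Lambda$, $CFK^\infty(T_{2,2g+1})$ may be modeled as the staircase complex: a free $\Lambda$-module on generators $a_0, \ldots, a_g$ with $\gr(a_i) = 0$ and $(\Alg, \Alex)(a_i) = (i, g-i)$, together with $b_1, \ldots, b_g$ with $\gr(b_i) = 1$ and $(\Alg, \Alex)(b_i) = (i, g-i+1)$, subject to $\partial a_i = 0$ and $\partial b_i = a_{i-1} + a_i$. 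In this model the degree-$0$ part is the $\F$-span of $\{a_0, \ldots, a_g\}$ (all cycles), and the degree-$0$ boundaries are spanned by $\{a_{i-1} + a_i\}_{i=1}^g$. Hence a degree-$0$ cycle $z = \sum_{j=0}^g c_j a_j$ is a homological generator iff $\sum_j c_j = 1$ in $\F$, i.e.\ iff the support $S := \{j \mid c_j = 1\}$ has odd cardinality.

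With this in hand, the first claim is a short combinatorial argument. By \Cref{prop: realizer}, every $R \in \G_0(T_{2,2g+1})$ is of the form $cl(z)$ for some degree-$0$ homological generator $z$; since the points $\{(j, g-j)\}_{j=0}^g$ form an antichain in $(\Z^2, \leq)$, for such a $z$ with support $S$ we get $cl(z) = \bigcup_{j \in S} R_{(j, g-j)}$. Taking $|S| = 1$, $S = \{i\}$, produces $R_{(i, g-i)}$, which is realizable (via $z = a_i$) and minimal: if $R \subsetneq R_{(i, g-i)}$ were realizable, a homological generator in $C_R$ would have to be supported on points $(j, g-j) \leq (i, g-i)$, forcing $j = i$, hence $cl(z) = R_{(i, g-i)} \subset R$, contradicting $R \subsetneq R_{(i, g-i)}$. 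For $|S| \geq 3$, the union $\bigcup_{j \in S} R_{(j, g-j)}$ strictly contains $R_{(i, g-i)}$ for any fixed $i \in S$ (since the antichain points are pairwise incomparable), so such $R$ is not minimal. This identifies $\G_0(T_{2,2g+1})$ as claimed.

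For the mirror, I would invoke \Cref{thm: duality}. A closed region $R$ lies in $\tG_0((T_{2,2g+1})^*)$ iff $R \cap (-R_{(i, g-i)}) \neq \varnothing$ for every $i = 0, \ldots, g$. Since $-R_{(i, g-i)} = \{(p,q) \mid p \geq -i,\ q \geq i-g\}$ and $R$ is a lower set, the existence of a point of $R$ in $-R_{(i, g-i)}$ is equivalent to $(-i, i-g) \in R$. Thus $R \in \tG_0((T_{2,2g+1})^*)$ iff $R$ contains the antichain $\{(-i, i-g)\}_{i=0}^g$, iff $\bigcup_{i=0}^g R_{(-i, i-g)} \subset R$. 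The minimum such $R$ is this union itself, yielding the stated single-element $\G_0((T_{2,2g+1})^*)$.

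The essentially non-combinatorial input is the staircase model of $CFK^\infty(T_{2,2g+1})$ used in the first paragraph; in the context of this paper it can either be cited as classical or, alternatively, derived by running the reduction of \Cref{sec: algorithm} on a grid diagram of $T_{2,2g+1}$, which is a very small hand computation. Once that model is in place, the remainder of the argument is bookkeeping with the closure operator on antichains together with the duality statement.
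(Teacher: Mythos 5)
Your proposal is correct and takes essentially the same approach as the paper's proof: both hinge on the explicit staircase model of $CFK^\infty(T_{2,2g+1})$. The only difference is the order — you compute $\G_0(T_{2,2g+1})$ directly (classifying degree-$0$ homological generators by odd supports) and derive the mirror via \Cref{thm: duality}, whereas the paper computes the mirror directly (exploiting that $(T_{2,2g+1})^*$ has a unique degree-$0$ homological generator) and treats the positive side as ``similar,'' noting in a parenthetical that the duality route you take is also available.
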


\begin{proof}
    It is known that $CFK^\infty((T_{2, 2r + 1})^*)$ possesses a unique homological generator $a := a_0 + \cdots a_g$ such that $(\Alg(a_i), \Alex(a_i)) = (-g + i, -i)$. This implies the latter equality. 
    Similarly, we can easily compute $\G_0(T_{2,2g+1})$.
    (We can also determine $\G_0(T_{2,2g+1})$ from $\G_0((T_{2,2g+1})^*)$ using \Cref{char G_0 of mirror}.)
\end{proof}

\begin{proposition} \label{prop:candidate-cond}
    Let $g_3, g_4$ denote the 3-, 4- genus of a knot $K$ respectively. Any $R \in \G_0(K)$ satisfies the following three conditions:
    \begin{enumerate}
    \item 
        Each corner $(i, j)$ of $R$ satisfies $|i - j| \leq g_3$.
    \item
        $R$ includes the region $R_{(-g_4, 0)} \cup R_{(-g_4 + 1, -1)} \cup \cdots \cup R_{(0, -g_4)}$.
    \item
        If $R$ contains a point $(i, g_4 - i)$ for some $i \in \{0, \ldots, g_4 \}$, then $R = R_{(i, g_4 - i)}$.
    \end{enumerate}{}
\end{proposition}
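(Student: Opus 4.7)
The plan is to verify the three conditions separately. Conditions (2) and (3) will both follow from the $4$-genus bound \Cref{thm:g4-bound} combined with \Cref{prop:nuplus-ineq} and the torus knot computation \Cref{prop:G_0-torus-knot}, while (1) will rely on choosing a reduced model of $CFK^\infty(K)$ whose filtered basis has diagonal width bounded by $g_3$.

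For condition (2), I apply the left half $-g_4[T_{2,3}]_{\np} \leq [K]_{\np}$ of \Cref{thm:g4-bound}. The class $-g_4[T_{2,3}]_{\np}$ is represented by $(T_{2,3}^{*})^{\# g_4}$, and a direct tensor-product calculation in $CFK^\infty(T_{2,3}^{*})^{\otimes g_4}$ (using that $CFK^\infty(T_{2,3}^{*})$ admits a unique degree-$0$ cycle of the form $a+c$ with $(\Alg,\Alex)$-positions $(-1,0)$ and $(0,-1)$, and that boundaries cannot cancel any of its contributing summands) shows that the unique degree-$0$ homological generator is $(a+c)^{\otimes g_4}$, whose closure is the staircase region $R_{(-g_4,0)} \cup R_{(-g_4+1,-1)} \cup \cdots \cup R_{(0,-g_4)}$. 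Thus $\G_0((T_{2,3}^{*})^{\# g_4})$ consists of this single region, matching the value predicted by \Cref{prop:G_0-torus-knot}. By \Cref{prop:nuplus-ineq}, every $R \in \G_0(K)$ contains some element of $\G_0((T_{2,3}^{*})^{\# g_4})$, hence contains the staircase region.

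For condition (3), I apply the right half $[K]_{\np} \leq g_4[T_{2,3}]_{\np}$. A parallel calculation gives $\G_0(T_{2,3}^{\# g_4}) = \{R_{(i, g_4-i)} \mid 0 \leq i \leq g_4\}$ (coinciding with $\G_0(T_{2,2g_4+1})$ from \Cref{prop:G_0-torus-knot}). \Cref{prop:nuplus-ineq} then produces, for each $i \in \{0,\ldots,g_4\}$, an element $R_i \in \G_0(K)$ with $R_i \subset R_{(i, g_4-i)}$. Now if $R \in \G_0(K)$ contains the point $(i, g_4-i)$, then by downward closedness $R \supset R_{(i, g_4-i)} \supset R_i$; since both $R$ and $R_i$ are minimal in $\tG_0(K)$, the inclusion $R_i \subset R$ forces $R = R_i$, which combined with $R \supset R_{(i, g_4-i)}$ yields $R = R_{(i, g_4-i)}$.

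For condition (1), I fix a reduced model of $CFK^\infty(K)$ whose filtered basis $\{x_k\}_{1 \leq k \leq r}$ (in the sense of axiom (5) of Section~\ref{subsubsec: CFK}) satisfies $|\Alex(x_k) - \Alg(x_k)| \leq g_3$ for every $k$; such a model exists because the filtered basis of a minimal $CFK^\infty$ is in bijection with a homogeneous basis of $\widehat{HFK}(K)$, whose Alexander gradings lie in $[-g_3, g_3]$. By \Cref{prop: realizer}, any $R \in \G_0(K)$ is the closure of a degree-$0$ homological generator $x = \sum_k q_k(U) x_k$, so the positions of the non-zero terms of $x$ form a finite subset $S \subset \Z^2$ with $cl(S) = R$, and by \Cref{max S = max R} the corners of $R$ are exactly the maximal elements of $S$. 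Hence each corner $(i,j)$ has the form $(\Alg(x_k) - m, \Alex(x_k) - m)$ for some $k$ and $m \in \Z$ with the $U^m$-coefficient of $q_k(U)$ non-zero, so $i - j = \Alg(x_k) - \Alex(x_k)$ inherits the bound $|i - j| \leq g_3$ from the model. The main obstacle is justifying the existence of this reduced model; this is standard in knot Floer theory but not explicitly recorded in the excerpt, so one would need either to cite an external reference or to provide a self-contained algebraic reduction (iteratively cancelling pairs of basis elements joined by a length-zero differential in the Alexander or algebraic filtration and checking $\G_0$-invariance via \Cref{prop: G_0 deg n}).
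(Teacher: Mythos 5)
Your overall decomposition — (1) via a $g_3$-bounded model plus \Cref{prop: realizer}, and (2)--(3) via \Cref{thm:g4-bound} and \Cref{prop:nuplus-ineq} — matches the paper's proof exactly, and your detailed bookkeeping for (1) and (3) is fine. For (1), the gap you flag (existence of a filtered basis $\{x_k\}$ with $|\Alex(x_k) - \Alg(x_k)| \leq g_3$, persisting on $U$-translates) is precisely the point the paper handles by citing \cite[Theorem 4.5]{2019arXiv190709116S}; so your treatment of (1) is essentially the paper's, modulo the citation you anticipate needing.

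The real issue is in (2) and (3), where you attempt to compute $\G_0((T_{2,3}^{*})^{\# g_4})$ and $\G_0(T_{2,3}^{\# g_4})$ directly via a tensor-product (K\"unneth) calculation. The assertion that $(a+c)^{\otimes g_4}$ is ``the unique degree-$0$ homological generator'' and that ``boundaries cannot cancel any of its contributing summands'' is not justified: in the tensor power there are many degree-$0$ chains beyond tensor products of degree-$0$ chains, and one must rule out a representative of the generator of $H_0 \cong \F$ with strictly smaller closure — this is exactly the content of showing minimality in $\tG_0$, and it is not automatic. Moreover, your appeal to \Cref{prop:G_0-torus-knot} as ``matching the value predicted'' is a non sequitur as stated: that proposition computes $\G_0(T_{2,2g+1})$ and $\G_0((T_{2,2g+1})^{*})$, which are \emph{different knots} from $T_{2,3}^{\# g}$ and $(T_{2,3}^{*})^{\# g}$. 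The paper bridges this gap by citing Petkova's result (\cite[Section 3.1]{Pet13}) that $\pm g[T_{2,3}]_{\nu^+} = \pm[T_{2,2g+1}]_{\nu^+}$, which together with the $\nu^+$-invariance of $\G_0$ lets one substitute the already-computed torus-knot answer directly, with no K\"unneth calculation at all. You should either invoke that equivalence (the clean route) or actually carry out and justify the K\"unneth computation, including the minimality argument.
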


\begin{proof}
    By \cite[Theorem 4.5]{2019arXiv190709116S}, up to $\z^2$-filtered homotopy equivalence, we may assume that
    $CFK^\infty$ has a filtered basis $\{x_k\}_{1 \leq k \leq r}$ such that
    \[
    |\Alex(U^n x_k) - \Alg(U^n x_k)| \leq g_3
    \]
    for any $1 \leq k \leq r$ and $n \in \z$.
    Hence the first statement follows from \Cref{prop: realizer}. Next, from \cite[Section 3.1]{Pet13}, we have $\pm g[T_{2,3}]_{\nu^+} = \pm[T_{2,2g + 1}]_{\nu^+}$. Now the remaining two statements follow from 
    \Cref{thm:g4-bound}, \Cref{prop:nuplus-ineq} and \Cref{prop:G_0-torus-knot}.
\end{proof}

In particular, any corner $(i, j)$ of $R \in \G_0(K)$ lies in the bounded area
\begin{align*}
    |i + j| \leq g_4 &\quad ( ij \geq 0), \\
    |i - j| \leq g_3 &\quad ( ij < 0).
\end{align*}
Thus the set of all semi-simple regions satisfying the conditions of \Cref{prop:candidate-cond} is finite. Furthermore, another strong condition can be imposed from the bigraded module structure of $\widehat{HFK}(K)$. As mentioned in \cite[Section 4.]{KP18}, by \cite[Lemma 4.5]{rasmussen2003floer}, up to $\Z^2$-filtered homotopy equivalence, $CFK^\infty(K)$ may be regarded as a chain complex generated by $\widehat{HFK}(K)$ over $\F[U, U^{-1}]$. Define
\[
    \widehat{S}_k := \{\ U^a x \mid M(x) \equiv k \bmod{2},\ a = (M(x) - k) / 2 \ \}
\]
where $x$ runs over the homogeneous generators of $\widehat{HFK}(K)$. For a closed region $R$, we say two corners $(i, j)$ and $(k, l)$ of $R$ are \textit{adjacent} if there are no corners in between them.

\begin{proposition} \label{prop:HFK-hat-reduction}
    Any $R \in \G_0(K)$ satisfies the following two conditions:
    \begin{enumerate}
    \item 
        For each corner $(i, j)$ of $R$, there exists an element $x \in \widehat{S}_0$ such that $(\Alg(x), \Alex(x)) = (i, j)$.
    \item
        For each pairs of adjacent corners $(i, j)$ and $(k, l)$ of $R$ with $i < k$, there exists an element $x \in \widehat{S}_{-1}$ such that $(\Alg(x), \Alex(x)) \leq (i, l)$.
    \end{enumerate}
\end{proposition}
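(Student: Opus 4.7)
The plan is to work in the reduced model of $CFK^\infty(K)$ recalled just before the statement, in which $\widehat{HFK}(K) \otimes_\F \F[U, U^{\pm 1}]$ is the underlying module and $\widehat{S}_k$ serves as an $\F$-basis for the Maslov-grading-$k$ part. Since this reduction is a $\Z^2$-filtered homotopy equivalence, $\G_0$ and $\tG_0$ are unchanged, and \Cref{prop: realizer} lets me pick a homological generator $x$ in this model with $cl(x) = R$. Expanding $x = \sum_{y \in \widehat{S}_0} c_y y$ with $c_y \in \F$, the identity $cl(x) = cl\{(\Alg(y), \Alex(y)) \mid c_y = 1\}$ combined with \Cref{max S = max R} will show that each corner of $R$ is realized by some $y \in \widehat{S}_0$ with $c_y = 1$, proving (1).

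For (2), the strategy will be a contradiction via minimality. Fixing adjacent corners $(i, j)$ and $(k, l)$ of $R$ with $i < k$ (hence $j > l$), I assume no $z \in \widehat{S}_{-1}$ satisfies $(\Alg(z), \Alex(z)) \leq (i, l)$, and I use the downward-closed sets $U := \{(i', j') \mid i' \leq i\}$ and $V := \{(i', j') \mid j' \leq l\}$ to split the support of $x$. Adjacency of the two corners rules out any point of $R$ with $i' > i$ and $j' > l$, so every term of $x$ has bigrading in $U \cup V$ and I can write $x = x_U + x_V + x_W$, where the three partial sums collect the generators whose bigrading lies in $U \setminus V$, $V \setminus U$, and $U \cap V = R_{(i,l)}$ respectively.

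The crucial step will be checking that each of $x_U, x_V, x_W$ is a cycle. Since $\partial$ is $\Z^2$-filtered, $\partial x_U \in C_U$ decomposes into a $(U \setminus V)$-part and a $(U \cap V)$-part, and similarly for $\partial x_V$ and $\partial x_W$; the hypothesis $(C_{R_{(i,l)}})_{-1} = 0$ kills every $(U \cap V)$-component, after which $\partial x = 0$ forces the remaining $(U \setminus V)$- and $(V \setminus U)$-pieces to vanish as well. Then $[x_U] + [x_V] + [x_W] = [x] \neq 0$ in $H_0 \cong \F$, so at least one of these three classes is nonzero; its closure is contained in $U$, $V$, or $R_{(i,l)}$, none of which contains both corners (for instance $(k, l) \notin U$, and $(i, j) \notin V$ since $j > l$), yielding a region in $\tG_0(K)$ strictly inside $R$ and contradicting the minimality of $R$. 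The main obstacle will be making the decoupling of $\partial x$ into three independent cycles rigorous, which is precisely where the hypothesis on $\widehat{S}_{-1}$ being empty inside $R_{(i,l)}$ enters.
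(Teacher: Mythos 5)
Your treatment of (1) is the same as the paper's (via \Cref{cor: realizer} and \Cref{max S = max R} in the reduced model). For (2) the paper takes a shorter, direct route: pick a homological generator $z$ with $cl(z) = R$, split it into two pieces $z = x + y$ with $x$ the sum of the terms of $\Alg \leq i$, note that both pieces are nonzero and that minimality of $R$ forces $\partial x \neq 0$ (otherwise one of $x, y$ would be a cycle with strictly smaller closure, contradicting $R \in \min\tG_0(K)$), and then observe that the nonzero degree-$(-1)$ chain $\partial x = \partial y$ lies in $C_{cl(x)} \cap C_{cl(y)} \subset C_{R_{(i,l)}}$, which directly exhibits the required element of $\widehat{S}_{-1}$. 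Your version runs the same minimality argument in the contrapositive, with a finer three-piece split $z = x_U + x_V + x_W$ along the overlap $U \cap V = R_{(i,l)}$: it is correct, and the role you give to the vanishing of $(C_{R_{(i,l)}})_{-1}$ (decoupling $\partial z = 0$ into three independent cycle conditions, because the three images of $\partial$ land in pairwise disjoint bigrading zones once the $U\cap V$-components are killed) is exactly what makes it go through. The trade-offs are modest: the paper's two-piece argument produces the witness from $\partial x$ in one stroke and avoids tracking the region $U \cap V$ separately (it is absorbed into $x$), whereas your contradiction route makes the logical chain---no $\widehat{S}_{-1}$ generator in $R_{(i,l)}$ implies the cycle splits, which gives a strictly smaller realizable region, which violates minimality---fully explicit. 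Note also that the paper's "$cl(x)\cap cl(y)=R_{(i,l)}$" should really be a containment $\subset$; that is all the argument needs, and your $U$, $V$, $U \cap V$ language makes this cleaner.
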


\begin{proof}
    The first statement is obvious from \Cref{cor: realizer}
    and the above arguments. For the second statement, take a homological generator $z$ of degree 0 whose closure gives $R$. Decompose $z$ as $x + y$ so that $x$ consists of terms of $z$ having $\Alg \leq i$. Obviously $x, y \neq 0$ and $\partial x + \partial y = 0$. Also from the minimality of $R$, we have $\partial x \neq 0$. Thus $\partial x = \partial y$ belongs to $cl(x) \cap cl(y) = R_{(i, l)}$.
\end{proof}{}

If $g_3(K), g_4(K)$ and $\widehat{HFK}(K)$ are known beforehand, we take them as additional inputs. If not, then we can compute $\widehat{HFK}(K)$ (which also gives $g_3(K)$) from the simplified grid complex
\[
    \widetilde{C}(G) := \frac{C^-(G)}{U_1 = \cdots = U_n = 0}.
\]
From \cite[Theorem 3.6.]{MOST07grid2} we have
\[
    H(\widetilde{C}(G)) \cong \widehat{HFK}(K) \otimes W^{\otimes (n - 1)},
\]
where $W$ is the two-dimensional bigraded vector space spanned by two generators, one generator in bigrading $(0, 0)$ and another in bigrading $(-1, -1)$. An effective algorithm for computing $\widehat{HFK}$ is described in \cite{BG12}. 

We call the set of all semi-simple regions satisfying \Cref{prop:candidate-cond} and \ref{prop:HFK-hat-reduction} the set of \textit{candidate regions}. Having enumerated the candidate regions, the remaining task is to determine the realizability for each candidate region. In fact, from the symmetry $CFK^{\infty}(K) \heq{\z^2}{\Lambda} (CFK^{\infty}(K))^r$, it follows that
\[
    R \in \G_0(K) \Leftrightarrow R^r \in \G_0(K).
\]
where $R^r$ is the \textit{reflection} of $R$
\[
    R^r := \{ (i, j) \mid (j, i) \in R \}.
\]
Thus it suffices to consider only one of the reflection pairs. For reasons to be explained in the coming sections, we check the one that has the smaller shift number.
    \subsection{Inflating the generators}\label{subsec:gens}

Recall that the grid complex $C = C^-(G)$ is generated by the set $S$ over $\F[U_1, \cdots, U_n]$, where each $\x \in S$ corresponds one-to-one to a permutation of length $n$. \textit{Heap's algorithm} \cite{heap1963permutations} is well known in the area of computer science, which enumerates all permutations of any fixed length by a sequence of transpositions. As for the two gradings $M$ and $A$, if $\x, \y \in S$ are related by a transposition and $r$ is a rectangle that connects $\x$ to $\y$, it is known that the following relations hold:
\begin{align*}
    M(\y) - M(\x) &= 2 \#(r \cap \mathbb{O}) - 2 \#(\x \cap \mathrm{Int}(r)) - 1, \\
    A(\y) - A(\x) &= \#(r \cap \mathbb{O}) - \#(r \cap \mathbb{X}).
\end{align*}
Note that computing the two gradings sequentially from the above formulas is more effective than computing them directly using the formulas given in \Cref{subsec:grid-cpx}. Thus we obtain the generating set $S$ together with the gradings $(M(\x), A(\x))$ for each $\x \in S$. Next we inflate these generators by multiplying monomials in $U_1, \cdots U_n$ and regard $C$ as an (infinitely generated) chain complex over $\F$. Each factor $U_i$ decreases the homological degree by $2$, so for any $k \in \Z$, the $k$-th chain group $C_k$ can be regarded as a finite dimensional vector space over $\F$ with generators of the form
$$ 
U_1^{a_1} \cdots U_N^{a_N} \x, \quad \deg{\x} - 2\textstyle{\sum_i} a_i = k.
$$
We call generators of this form \textit{inflated generators}. Recall from \Cref{subsec:grid-cpx} that the $\Z^2$-filtration is given by
\begin{align*}
    \Alg(U_1^{a_1} \cdots U_N^{a_N} \x) &= -a_1,\\
    \Alex(U_1^{a_1} \cdots U_N^{a_N} \x) &= A(\x) - \sum_i a_i.
\end{align*}


Note that the number of inflated generators increases infinitely as the homological degree $k$ decreases. 
Nonetheless, from \Cref{prop:candidate-cond} (and reasons to be stated in the coming sections), we only need to consider chain groups of degree between $-2 g_3$ and 1, hence the enumeration is finite.
    \subsection{Computing a homological generator}

Next we describe the algorithm for computing a homological generator of $C(G)$, i.e.\ a representative cycle of the unique generator of $H^-_0(G) \cong \F$. For efficiency, the algorithm avoids direct computation of the homology group. We call the set of points $\{ (i, n - i - 1) \mid i \in \frac{1}{2}\Z \}$ the \textit{anti-diagonal} of $G$.

\begin{proposition}
    Suppose all $O$'s of $G$ lie in the anti-diagonal of $G$. The element $\x_0 := \{ (i, n - i - 1) \}_{0 \leq i < n}$ represents the unique generator of $H^-_0(G) \cong \F$.
\end{proposition}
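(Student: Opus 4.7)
The plan is to verify three properties of $\x_0$: (i) $M(\x_0) = 0$, (ii) $\partial \x_0 = 0$, and (iii) $[\x_0]$ is non-trivial in $H^-_0(G) \cong \F$.

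For (i) I would expand $J(\x_0 - \mathbb{O}, \x_0 - \mathbb{O})$ bilinearly as $J(\x_0, \x_0) - 2J(\x_0, \mathbb{O}) + J(\mathbb{O}, \mathbb{O})$. Since $\x_0$ and all but the wraparound $O$-marker at $(n - \tfrac{1}{2}, n - \tfrac{1}{2})$ lie on the line $i + j = n - 1$, most pairwise $<$-comparisons between points of $\x_0 \cup \mathbb{O}$ vanish. The only nontrivial counts come from that corner $O$, and tallying them gives $M(\x_0) = 0$.

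The core of the proof is (ii). Fix a pair of points $p_1 = (i_1, n-1-i_1)$, $p_2 = (i_2, n-1-i_2)$ in $\x_0$ with $i_1 < i_2$, set $d := i_2 - i_1$, and let $\y$ be the resulting swapped state. On the torus there are four rectangles with corners at the four involved intersection points. Exactly two of these, call them $r_b$ (long in $x$, short in $y$) and $r_c$ (short in $x$, long in $y$), lie in $\mathrm{Rect}^\circ(\x_0, \y)$, while the remaining two lie in $\mathrm{Rect}(\y, \x_0)$ and are irrelevant to $\partial \x_0$. The key observation is that the anti-diagonal placement of $\mathbb{O}$ forces the interiors of $r_b$ and $r_c$ to contain no $O$: every anti-diagonal half-integer point whose row lies in the $y$-strip of $r_b$ has $x$-coordinate in the excluded column range $[i_1, i_2]$, and symmetrically for $r_c$. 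Thus each of $r_b$, $r_c$ carries trivial $U$-weight and, if empty of $\mathbb{X}$, contributes exactly $\y$ to $\partial \x_0$. A short double count in the $d \times d$ subgrid formed by the rows of $r_b$ and the columns of $r_c$ shows $|r_b \cap \mathbb{X}| = |r_c \cap \mathbb{X}|$: both equal the number of $X$-markings in that subgrid, by a ``one $X$ per row equals one $X$ per column'' tally. Therefore either both rectangles contain an $X$ and are excluded from $\partial$, or both avoid $\mathbb{X}$ and the two copies of $\y$ cancel modulo $2$. Summing over all transposition pairs gives $\partial \x_0 = 0$.

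For (iii), observe that $\x_0$ is precisely the canonical ``northwest corner of each $O$-marker'' state: for $k = 1, \ldots, n-1$ the $O_k = (k - \tfrac{1}{2}, n-k-\tfrac{1}{2})$ contributes the NW corner $(k-1, n-k)$, and the wraparound $O$ at $(n - \tfrac{1}{2}, n - \tfrac{1}{2})$ contributes the corner $(n-1, n) \equiv (n-1, 0)$. Under the standard quasi-isomorphism $C^-(G) \simeq CFK^-(K) \simeq \F[U]$ this canonical state is sent to the unit of $\F[U]$, so $[\x_0]$ generates $H^-_0(G)$. The principal obstacle is the double-counting pairing argument in (ii); the rest is essentially bookkeeping.
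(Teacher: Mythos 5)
Your part (iii) has a genuine gap. You assert that ``under the standard quasi-isomorphism $C^-(G) \simeq CFK^-(K) \simeq \F[U]$ this canonical state is sent to the unit,'' but there is no such result to cite: showing that $[\x_0]$ is nonzero in $H^-_0(G)$ is precisely what is to be proved, so your argument is circular. The paper instead argues directly that $\x_0$ is not a boundary. Every rectangle in $\mathrm{Rect}(\y, \x_0)$, i.e.\ going \emph{into} $\x_0$, is one of the two ``fat'' rectangles that straddle the anti-diagonal, and each of these contains in its interior either a component of $\x_0$ (hence is not empty) or at least one $O$-marker. Consequently the coefficient of $\x_0$ in any $\partial c$ lies in the ideal $(U_1, \ldots, U_n)$ and is never $1$, so $\x_0$ is not a boundary; together with $\partial\x_0 = 0$ and $M(\x_0) = 0$, this forces $[\x_0]$ to generate $H^-_0(G) \cong \F$.

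Your part (ii) also contains a conceptual error, although the conclusion happens to survive. You assume that a rectangle must be ``empty of $\mathbb{X}$'' to contribute to $\partial$ and that ``both rectangles contain an $X$ and are excluded from $\partial$'' is a possible outcome. For the complex $C^-(G)$ as defined in this paper, that is false: the differential counts \emph{every} empty rectangle, weighted only by $O$-intersections, and the $X$-markers affect only the Alexander filtration, not $\partial$ (this is exactly why $A$ is merely non-increasing under $\partial$ rather than preserved). The paper's argument is correspondingly simpler: both empty rectangles from $\x_0$ to $\y$ carry trivial $U$-weight, so they contribute $\y + \y = 0$ over $\F$. Your double count $|r_b \cap \mathbb{X}| = |r_c \cap \mathbb{X}|$ is correct arithmetic but irrelevant here, and the reasoning that leans on it reflects a misreading of the differential.
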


\begin{proof}
    $M(\x_0) = 0$ is proved in \cite[Lemma 4.3.5]{OSS15gridbook}. For any $\y$ that is related to $\x_0$ by a transposition, there are exactly two empty rectangles that connect $\x_0$ to $\y$, and neither of them contains $O$. Thus $\partial \x_0 = 0$. Conversely, rectangles that connect $\y$ to $\x_0$ contains either a point of $\x_0$ or one of $O$, so there is no chain $c$ such that $\partial c$ contains $\x_0$ with coefficient 1 in its terms. Thus $\x_0$ is non-boundary.
\end{proof}

Obviously $G$ can be transformed into a grid diagram $G_0$, whose $O$'s lie in its anti-diagonal, by applying some sequence of \textit{commutation moves} (moves that swap two adjacent columns). Now reverse the sequence. Following the arguments given in \cite[Section 3.1]{MOST07grid2}, each commutation move $G_i \rightarrow G_{i+1}$ is assigned a chain homotopy equivalence map $\Phi: C(G_i) \rightarrow C(G_{i+1})$. Starting from $\x_0 \in C_0(G_0)$, we obtain one desired cycle in $C_0(G)$ by sequentially applying the corresponding chain maps. Here we explain the explicit computation of $\Phi$.

Given two diagrams that are related by a single commutation move, we depict the move by drawing the two diagrams on the same grid, where the intermediate circle $\beta$ is to be replaced with a different circle $\gamma$. By perturbing $\gamma$ so that it intersects $\beta$ transversally at two points that do not lie in the horizontal circles, the chain map $\Phi$ is given by 
\[
    \Phi(\x) = \sum_{\y \in S(G')}\sum_{\Pi \in \mathrm{Pent}^\circ (\x, \y)} U_1^{\epsilon_1} \cdots U_n^{\epsilon_n} \mathbf{y}
\]
where $\mathrm{Pent}^\circ(\x, \y)$ denotes the set of empty pentagons connecting $\x$ to $\y$, and for each such pentagon $\Pi$, the exponent $\epsilon_i \in \{0, 1\}$ is given by the number of intersections of $\Pi$ and $O_i$. See \cite{MOST07grid2} for precise definition. 

The actual computation is easy. Suppose we are swapping the $i$-th and the $(i + 1)$-th columns, so that $O_i$ and $O_{i+1}$ interchanges. Then $\epsilon_i$ is always $0$, and $\epsilon_{i+1} = 1$ if and only if, before the commutation move, $O_i$ is located at the left-upper of $O_{i + 1}$ and its vertical coordinate is between the lower and upper sides of $\Pi$. This can be seen from \Cref{fig:pentagon}, where in each of the four cases $\beta$ is drawn as a straight vertical blue line, $\gamma$ is drawn as a curve close to $\beta$. The intersection of the two curves is drawn as a small gray disk, and the other four small disks form the vertices of the pentagon $\Pi$. Note that we can ignore the $X$'s since they are not involved in both $\partial$ and $\Phi$. For the remaining exponents $\epsilon_j$, it is easy to check for each $O_j$ whether it is contained in $\Pi$.

\begin{figure}[t]
    \centering
    \input{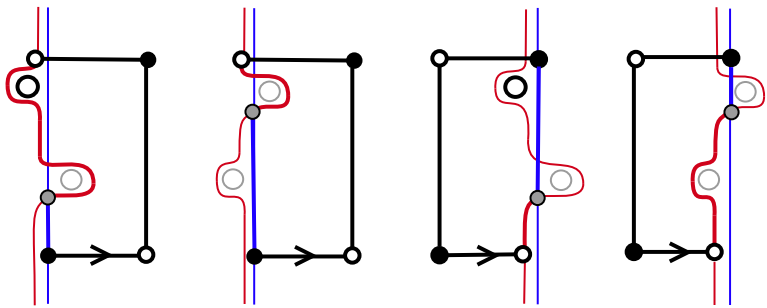}
    \caption{Relative positions of $O_i, O_{i + 1}$ and the pentagon $\Pi$}
    \label{fig:pentagon}
\end{figure}

    \subsection{Determining the realizability} \label{subsec:realizability-by-linear-system}

Having prepared the inflated generators and one homological generator $z$, we are ready to determine the realizability of the candidate regions. From \Cref{cor: tG_0 and C^-(G)}, a closed region $R$ of shift number $s$ is realizable (i.e. $R \in \tG_0(K)$) if and only if there is a cycle that is homologous to $(U_1)^s z$ and is contained in $C_{R[s]}$. This is equivalent to the condition that (the projection of) $(U_1)^s z$ is null homologous in the quotient complex $Q_R := C/C_{R[s]}$. For any $k \in \Z$, we can take a finite basis for $(Q_R)_k$ from the inflated generators contained in $C_k$ and modding out those contained in $(C_{R[s]})_k$. Having fixed such bases in degree $-2s$ and $-2s + 1$, the differential $\partial: (Q_R)_{-2s + 1} \rightarrow (Q_R)_{-2s}$ is represented by a matrix $A$. Thus the realizability of $R$ is equivalent to the existence of a solution $x$ of the linear system
\[
    A x = b
\]
where $b$ is the vector corresponding to $(U_1)^s z \in (Q_R)_{-2s}$. Now that we have reduced the problem to linear systems, the remaining task is purely computational. Here we only mention that the matrix $A$ is generally sparse, and becomes extremely large as $n$ increases. More details on sparse linear systems is given in \Cref{subsec:sparse-linear-system}. 
\begin{remark}
    This procedure of reducing a filtration-level type problem to linear systems can be applied to computing some other filtration-level type invariants. For example, Rasmussen's $s$-invariant (\cite{rasmussen2010}), which is an integer valued knot invariant defined by a $\Z$-filtration on (a deformed version of) Khovanov homology over $\mathbb{Q}$, can be computed by the same method. 
\end{remark}
    \subsection{Integration}

The four procedures are integrated into a program that goes through the following six steps:
\begin{enumerate}
    \item Enumerate the candidate regions.
    \item Setup the inflated generators.
    \item Compute one homological generator.
    \item Compute $\tau(K)$.
    \item Determine the simple regions in $\G_0(K)$.
    \item Check the remaining candidate regions.
\end{enumerate}
Assuming that we have gone through the first three steps, we explain how remaining three work, together with some techniques for reducing the computational cost. Before moving on, we remark that determining the realizability of a candidate region may contribute to discarding some other candidates. Namely, if we find a realizable region $R$, then we may discard candidates that include $R$, since they are obviously realizable and non-minimal. Conversely, if we find a non-realizable $R$, then we discard candidates that are included in $R$, since they are obviously non-realizable. The program terminates at any step if the candidates become empty.

\subsubsection{Computing $\tau(K)$}

From \Cref{thm: G_0 and others}, we know that $\tau(K)$ is given by the minimum integer $j$ such that $R_{(-1, \infty)} \cup R_{(0, j)}$ is realizable and $R_{(-1, \infty)} \cup R_{(0, j - 1)}$ is not. Here if $\tau(K) = 0$, there is a chance of an early exit: if $R_{(0, 0)}$ is realizable for both $K$ and $K^*$, then from \Cref{thm: duality} we immediately have $\G_0(K) = \{ R_{(0, 0)} \}$. In the following we assume that $\tau(K) \geq 0$, by replacing $G$ with its mirror if necessary. 

\subsubsection{Determining the simple regions in $\G_0(K)$}

Next we narrow down the candidates by determining the simple regions in $\G_0(K)$. Starting from $R_{(0, g_4)}$, we check all simple candidate regions. Note that a simple region $R = R_{(i, j)}$ belongs to $\G_0(K)$ if and only if $R$ is realizable and $R \setminus \{(i, j)\}$ is not. This strategy actually works, as we see from the results that for prime knots $K$ with up to $11$ crossings and $\tau(K) \geq 0$, $\G_0(K)$ consist only of regions whose corners lie in the first quadrant, and in many cases they are all simple. 

\subsubsection{Checking the remaining candidates}

Having determined the simple regions in $\G_0(K)$, we expect that the remaining candidates are all non-realizable (we still assume $\tau(K) \geq 0$). Suppose $R$ is the maximal remaining candidate. If we succeed to show that $R$ is non-realizable, then we are done. However such computation tends to be highly expensive, since $R$ has relatively high shift number. To avoid such computation whenever possible, it is useful to consider the mirror simultaneously. For any subset $R \subset \Z^2$, we denote by $-R$ the subset of $\Z^2$ given by 
\[
    -R := \{ (i, j) \in \Z^2 \mid (-i, -j) \in R \}.
\]

\begin{lemma}
    For any pair of realizable regions $R$ of $K$ and $R'$ of $K^*$, we have $R \cap (-R') \neq \varnothing$. 
\end{lemma}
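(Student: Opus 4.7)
The plan is to derive this lemma directly from the duality theorem for $\G_0$ (\Cref{thm: duality}), or rather from the $\tG_0$-version of it noted in the remark following that theorem. That remark asserts
\[
\tG_0(K^*) = \{ R \in \CR(\z^2) \mid \forall R'' \in \tG_0(K),\ R \cap (-R'') \neq \varnothing \}.
\]
Since we are given that $R \in \tG_0(K)$ and $R' \in \tG_0(K^*)$, applying the above to the pair $(R', R)$ yields $R' \cap (-R) \neq \varnothing$, which is what we want modulo reversing the roles of $R$ and $R'$.

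The only thing to verify carefully is that the relation ``$A \cap (-B) \neq \varnothing$'' is symmetric in $A$ and $B$. This is immediate: if $(a,b) \in A \cap (-B)$, then $(a,b) \in A$ and $(-a,-b) \in B$; setting $(c,d) := (-a,-b)$ gives $(c,d) \in B$ with $(-c,-d) = (a,b) \in A$, i.e.\ $(c,d) \in B \cap (-A)$. Thus $A \cap (-B) \neq \varnothing \iff B \cap (-A) \neq \varnothing$. Applying this to $A = R'$ and $B = R$ translates the conclusion of the duality remark into $R \cap (-R') \neq \varnothing$, as required.

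I would therefore present the proof in two short steps: first invoke the remark after \Cref{thm: duality} on $R' \in \tG_0(K^*)$ together with $R \in \tG_0(K)$ to obtain $R' \cap (-R) \neq \varnothing$, then rewrite using the symmetry observation above. There is no real obstacle here; the substantive content is already packed into \Cref{thm: duality}, and this lemma is essentially a bookkeeping consequence of it that will be used in the algorithmic step to discard non-realizable candidate regions for $K$ as soon as a realizable region for $K^*$ disjoint (after negation) from them is found.
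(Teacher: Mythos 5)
Your proof is correct and is essentially the intended derivation: the paper states this lemma without proof precisely because it is an immediate consequence of the duality results for $\G_0$ established in \Cref{thm: duality} and its remark. Your two steps — invoking the symmetric $\tG_0$-form of the duality equality, then noting that $A \cap (-B) \neq \varnothing$ is symmetric in $A$ and $B$ — are exactly right. One small remark: if you prefer not to rely on the unproved remark, the same conclusion follows directly from \Cref{thm: duality} and \Cref{thm: minimalize}: from $R \in \tG_0(K)$ pick $R_0 \in \G_0(K)$ with $R_0 \subset R$, apply \Cref{thm: duality} to $R' \in \tG_0(K^*)$ to get $R' \cap (-R_0) \neq \varnothing$, and use $-R_0 \subset -R$ together with the symmetry you observed. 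Either route is fine; yours is the cleaner of the two.
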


\begin{lemma}
    Suppose $\mathcal{S}$ is a set of closed regions that includes $\G_0(K)$. If a closed region $R'$ intersects with $-R$ for any $R \in \mathcal{S}$, then $R'$ is a realizable region of $K^*$.
\end{lemma}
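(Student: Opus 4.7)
The plan is to derive the statement directly from the duality theorem for $\G_0$ (\Cref{thm: duality}), which asserts
\[
\tG_0(K^*) = \{ R \in \CR(\z^2) \mid \forall R' \in \G_0(K),\ R \cap (-R') \neq \varnothing \}.
\]
Since being ``realizable for $K^*$'' means precisely belonging to $\tG_0(K^*)$, it suffices to verify the right-hand-side condition for the given $R'$.

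First I would unwind the hypothesis: by assumption, $R' \cap (-R) \neq \varnothing$ for every $R \in \mathcal{S}$. Because $\mathcal{S} \supset \G_0(K)$, the same intersection condition holds a fortiori for every $R \in \G_0(K)$. Applying \Cref{thm: duality} then yields $R' \in \tG_0(K^*)$, which is the desired realizability statement. There is essentially no obstacle here beyond citing the duality theorem; the lemma is a direct monotonicity consequence of passing from a superset $\mathcal{S}$ of $\G_0(K)$ down to $\G_0(K)$ itself inside the universal quantifier. The preceding lemma (that realizable regions of $K$ and $K^*$ must pairwise intersect under the $-(\cdot)$ involution) is the reverse half of this same duality principle and could also be invoked to motivate why checking against a larger set $\mathcal{S}$ is a natural sufficient condition.
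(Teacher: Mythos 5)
Your proof is correct and is exactly the intended argument: the lemma is a straightforward weakening of \Cref{thm: duality} obtained by restricting the universal quantifier from the larger set $\mathcal{S}$ down to $\G_0(K)$. The paper states this lemma without proof, and your two-line derivation from the duality theorem is precisely what the authors had in mind.
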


Thus the intermediate results obtained for $K$ can be reflected to those of $K^*$ and vice versa. After having determined the simple regions in $\G_0(K)$, we expect that the maximal remaining region of $K$ is non-realizable, and the minimal remaining region of $K^*$ is realizable. We select the one with smaller computational cost (which is simply estimated by the size of the corresponding matrix). This final process is continued until the candidates are empty.

\subsubsection{Rejection by a smaller system}
As we have seen, there are cases where we expect that a specific region $R$ is non-realizable. Suppose there is a smaller chain complex $C'$ and a $\Z^2$-filtration preserving chain map $f: C \rightarrow C'$. The existence of $c \in C$ such that 
\[
    z - \partial c \in C_R
\]
implies the existence of $c' \in C'$ such that 
\[
    f(z) - \partial c' \in C'_R.
\]
Hence by contraposition, the non-realizability of $R$ can possibly be detected using a smaller system. Our program use the chain map induced by the following projection:
\[
    \pi: \F[U_1, \cdots, U_n] \rightarrow \F[U_1, U_2],
    \quad 
    U_1 \mapsto U_1,
    \ 
    U_{\geq 2} \mapsto U_2.
\]
Further reduction techniques on the level of linear systems is described in \Cref{subsec:sparse-linear-system}.

    \section{Computational results}
\label{sec: results}

Here we list the computational results performed for prime knots of crossing number up to 11, that are both non-slice and homologically thick (with respect to $\widehat{HFK}$). The reason for this selection is that (i) if a knot $K$ is slice then $\G_0(K) = \{R_{(0, 0)}\}$, and (ii) if $K$ is homologically thin then $\G_0(K) = \G_0(\tau(K)[T_{2, 3}])$, so the non-obvious ones are those complements.

We used Culler's program \texttt{Gridlink} \cite{gridlink} to produce the input grid diagrams. This program contains the preset data for prime knots with up to 12 crossings, which are extracted from Livingston's Knotinfo database \cite{Knotinfo}. It also has the \texttt{simplify} function that randomly modifies a given diagram to reduce its grid number. In addition to the input diagram data, we used values $g_3, g_4$ and $\tau$ which are read from Knotinfo, and also $\widehat{HFK}$ which are taken from \cite{BG12}. We have implemented the algorithm as described in \Cref{sec: algorithm}. With such input data set, the results listed below were obtained within an hour by running the program on a usual laptop computer\footnote{Currently we are preparing to open source the program.}.

\Cref{table:G_0-types} presents the \textit{$\G_0$-types} of the target knots. We say that $K$ has the \textit{$\G_0$-type} of a typical knot $T$, when $\G_0(K) = \G_0(T)$. We use torus knots and those cables as typical knots, for example we read that $8_{19}$ has the $\G_0$-type of the $(3, 4)$-torus knot. The listed types are the `positive side' of the knot, that is, either one of $K$ and $K^*$ having $\tau \geq 0$. In this list, we see that there are only six types that appear as those $\G_0$-types. The next \Cref{table:G_0-tau-V_k} presents the values of $\G_0(K)$, $\tau(K)$, $V_k(K)$ and $V_k(K^*)$ for such typical knots $K$. Note that $T_{2,3; 2,1}$ (the $(2, 1)$ cable of $T_{2, 3}$) is the only type in this list that possesses a non-simple minimal region $R_{(0, 1)} \cup R_{(1, 0)}$. The algorithm for computing $\G_0(K^*)$ from $\G_0(K)$ is described in \Cref{char G_0 of mirror}. Finally \Cref{table:Upsilon} presents $\Upsilon_K$ for the typical knots $K$, where $PL[(x_0, y_0), \cdots, (x_n, y_n)]$ denotes the PL-function from $[x_0, x_n]$ to $\R$ whose graph is given by linearly connecting the adjacent coordinates.

Computations are performed for another set of knots, that is, prime knots of crossing number 12 that are non-slice, homologically thick, and $\tau = 0$. As mentioned in \Cref{sec:intro}, results have shown that all such knots have $\G_0$-type of the unknot. Hence we obtain \Cref{thm: converse}. Computations for the remaining knots with $12$ crossings are under progress.

\vspace{1em}
\begin{table}[h!]
    \centering
    \begin{tabular}{ll}
        \hline
        $K$ & $\G_0$-type \\
        \hline
        $8_{19}$ & $T_{3,4}$ \\
        $9_{42}$ & $O$ \\
        $10_{124}$ & $T_{3,5}$ \\
        $10_{128}$ & $T_{3,4}$\\
        $10_{132}$ & $T_{2, 3}$ \\
        $10_{136}$ & $O$ \\
        $10_{139}$ & $T_{3,5}$ \\
        $10_{145}$ & $T_{2, 5}$ \\
        $10_{152}$ & $T_{3,5}$ \\
        $10_{154}$ & $T_{3,4}$\\
        $10_{161}$ & $T_{3,4}$\\
        $11n6$ & $O$ \\
        $11n9$ & $T_{3,4}$ \\
        $11n12$ & $T_{2, 3}$ \\
        $11n19$ & $T_{2, 3}$ \\
        \hline
    \end{tabular}
    \hspace{2em}
    \begin{tabular}{ll}
        \hline
        $K$ & $\G_0$-type \\
        \hline
        $11n20$ & $O$ \\
        $11n24$ & $O$ \\
        $11n27$ & $T_{3,4}$\\
        $11n31$ & $T_{2, 5}$ \\
        $11n34$ & $O$ \\
        $11n38$ & $O$ \\
        $11n45$ & $O$ \\
        $11n57$ & $T_{3,4}$\\
        $11n61$ & $T_{2,3; 2,1}$ \\
        $11n70$ & $T_{2, 3}$ \\
        $11n77$ & $T_{3,5}$ \\
        $11n79$ & $O$ \\
        $11n80$ & $T_{2, 3}$ \\
        $11n81$ & $T_{3,4}$ \\
        $11n88$ & $T_{3,4}$\\
        \hline
    \end{tabular}
    \hspace{2em}
    \begin{tabular}{ll}
        \hline
        $K$ & $\G_0$-type \\
        \hline
        $11n92$ & $O$ \\
        $11n96$ & $O$ \\
        $11n102$ & $T_{2, 3}$ \\
        $11n104$ & $T_{3,4}$ \\
        $11n111$ & $T_{2, 3}$ \\
        $11n126$ & $T_{3,4}$\\
        $11n133$ & $T_{2,3; 2,1}$ \\
        $11n135$ & $T_{2, 5}$ \\
        $11n138$ & $O$ \\
        $11n143$ & $O$ \\
        $11n145$ & $O$ \\
        $11n151$ & $T_{2, 3}$ \\
        $11n152$ & $T_{2, 3}$ \\
        $11n183$ & $T_{3,4}$ \\
        \\
        \hline
    \end{tabular}
    \caption{List of $\G_0$-types}\label{table:G_0-types}

    \vspace{1.5em}

    \begin{tabular*}{\textwidth}{l @{\extracolsep{\fill}} llll}
        \hline
        $K$ & 
            $\G_0(K)$ &
            $\tau(K)$ &
            $V_k(K)$ &
            $V_k(K^*)$ \\
        \hline
    
        $O$ & 
            $R_{(0, 0)}$ &
            0 &
            \{0\} &
            \{0\} \\

        $T_{2, 3}$ & 
            $R_{(0, 1)}, R_{(1, 0)}$ &
            1 &
            \{1, 0\} &
            \{0\} \\

        $T_{2, 5}$ & 
            $R_{(0, 2)}, R_{(1, 1)}, R_{(2, 0)}$ &
            2 &
            \{1, 1, 0\} &
            \{0\} \\

        $T_{2,3; 2,1}$ & 
            $R_{(0, 2)}, R_{(0, 1)} \cup R_{(1, 0)}, R_{(2, 0)}$ &
            2 &
            \{1, 1, 0\} &
            \{0\} \\

        $T_{3,4}$ &
            $R_{(0, 3)}, R_{(1, 1)}, R_{(3, 0)}$ &
            3 &
            \{1, 1, 1, 0\} &
            \{0\} \\

        $T_{3,5}$ & 
            $R_{(0, 4)}, R_{(1, 2)}, R_{(2, 1)}, R_{(4, 0)}$ &
            4 &
            \{2, 1, 1, 1, 0\} &
            \{0\} \\

        \hline
    \end{tabular*}
    \caption{$\G_0(K)$, $\tau(K)$, $V_k(K)$ and $V_k(K^*)$}
    \label{table:G_0-tau-V_k}
    
    \vspace{1.5em}

    \begin{tabular*}{\textwidth}{ll}
        \hline
        $K$ & 
            $\Upsilon_K$ \\
        \hline
    
        $O$ & 
            $0$ \\
            
        $T_{2, 3}$ & 
            $PL[(0, 0), (1, -1), (2, 0)]$ \\
            
        $T_{2, 5}$ & 
            $PL[(0, 0), (1, -2), (2, 0)]$ \\
    
        $T_{2,3; 2,1}$ & 
            $PL[(0, 0), (2/3, -4/3), (1, -1), (4/3, -4/3), (2, 0)]$ \\

        $T_{3,4}$ &
            $PL[(0, 0), (2/3, -2), (4/3, -2), (2, 0)]$ \\
            
        $T_{3,5}$ & 
            $PL[(0, 0), (2/3, -8/3), (1, -3), (4/3, -8/3), (2, 0)]$ \\

        \hline
    \end{tabular*}
    \caption{$\Upsilon_K$}\label{table:Upsilon}
\end{table}

    \newpage
    \appendix
\section{The duality theorem for $\G_0$}
\label{sec: duality}
In this appendix, we prove the duality theorem for $\G_0$,
 stated as \Cref{thm: duality}.
To prove the theorem, we first introduce the notion of {\it the dual} of a closed region $R$ (denoted $R^*$). Then, for any $C = CFK^\infty(K)$, it is proved that the subcomplex $C^*_R$ consists of chains which
map $C_{R^*}$ to zero.
In \Cref{subsec: proof of duality}, 
we give a proof of \Cref{thm: duality}.
In \Cref{subsec: computing G_0(K^*) from G_0(K)},
we provide a method for computing $\G_0(K^*)$ from $\G_0(K)$
algorithmically.
\subsection{The dual of a closed region}
For any given $R \in \CR(\z^2)$, {\it the dual region $R^*$ of $R$} is defined by
$$
R^* := \z^2 \setminus (-R) = \{(i,j) \in \z^2 \mid (-i,-j) \not\in R\}.
$$
Here we show several basic properties of dual regions.
\begin{lemma}
$R^* \in \CR(\z^2)$.
\end{lemma}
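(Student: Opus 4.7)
The plan is to unpack the definitions and exploit the fact that negation is an order-reversing involution on $\Z^2$. By definition, proving $R^* \in \CR(\Z^2)$ amounts to showing that $R^*$ is a lower set: given $x \in \Z^2$ and $y \in R^*$ with $x \leq y$, I must derive $x \in R^*$, i.e., $-x \notin R$.

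I will argue by contraposition. The key observation is that the map $z \mapsto -z$ reverses the componentwise partial order on $\Z^2$: $x \leq y$ if and only if $-y \leq -x$. So from $x \leq y$ I obtain $-y \leq -x$. Now assume for contradiction that $-x \in R$. Since $R$ is a closed region (a lower set), and $-y \leq -x$ with $-x \in R$, the lower-set property of $R$ forces $-y \in R$. But this contradicts $y \in R^* = \Z^2 \setminus (-R)$, which requires $-y \notin R$. Hence $-x \notin R$, i.e., $x \in R^*$, as desired.

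The argument is essentially a one-liner once the order-reversing property of negation is noted, so there is no real obstacle; the only thing to be careful about is not confusing $-R$ (the image of $R$ under negation) with $R^* = \Z^2 \setminus (-R)$.
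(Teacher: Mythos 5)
Your proof is correct and follows essentially the same route as the paper's: assume the target point fails to lie in $R^*$, apply the order-reversing property of negation together with the lower-set property of $R$, and derive a contradiction with the hypothesis. No meaningful difference from the paper's argument.
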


\begin{proof}
Suppose that $(k,l) \in \z^2$, $(i,j) \in R^*$ and $(k,l) \leq (i,j)$.
Under these assumptions, if $(k,l) \neq R^*$, then
$(-k,-l) \in R$. On the other hand, since $(-i,-j) \leq (-k,-l)$, we have $(-i,-j) \in R$.
This contradicts to $(i,j) \in R^*$, and hence $(k,l) \in R^*$.
\end{proof}

\begin{lemma}
$R^{**}=R$.
\end{lemma}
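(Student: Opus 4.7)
The claim is a purely set-theoretic identity, so the plan is simply to unwind the two layers of the $(-)^{*}$ operation and observe that the two negations cancel out. No nontrivial structural property of $R$ is used beyond $R \subset \Z^2$; in particular, the fact that $R$ is a closed region plays no role in the argument (it only matters in the preceding lemma, which guarantees that iterating $(-)^{*}$ stays within $\CR(\Z^2)$).

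Concretely, I would start from the defining equality $R^{*} = \Z^{2} \setminus (-R)$ and apply it twice to get
\[
R^{**} = \Z^{2} \setminus (-R^{*}).
\]
Then I would rewrite the inner set: a point $(i,j)$ lies in $R^{**}$ iff $(-i,-j) \notin R^{*}$, which by definition of $R^{*}$ means $(-i,-j) \in -R$, i.e.\ $(i,j) \in R$. So $R^{**} = R$.

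Since the whole argument is a one-line unwinding of definitions, there is no real obstacle. The only thing worth being careful about is keeping straight the distinction between the set-theoretic complement $\Z^{2} \setminus (\cdot)$ and the negation operation $-(\cdot)$ on subsets of $\Z^{2}$ (defined earlier as $-S = \{(i,j) \mid (-i,-j) \in S\}$), and to observe that $-(-S) = S$ trivially. Once those symbol-pushing identities are in hand, $R^{**} = R$ is immediate.
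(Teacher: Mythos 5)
Your proof is correct and is essentially identical to the paper's one-line argument: both unwind the definition twice to get the chain of equivalences $(i,j)\in R^{**} \Leftrightarrow (-i,-j)\notin R^{*} \Leftrightarrow (i,j)\in R$.
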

\begin{proof}
$(i,j) \in R^{**} \Leftrightarrow (-i,-j) \neq R^* \Leftrightarrow (i,j) \in R$.
\end{proof}

\begin{lemma}
$R^* =  \bigcap_{(k,l)\in R} 
\{ i \leq -k-1 \text{ {\rm or }} j \leq -l-1 \}$.
\end{lemma}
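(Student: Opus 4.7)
The plan is to unwind the definitions on both sides and use the fact that $R$, being a closed region, is a lower set in $\Z^2$.

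First I would analyze the right-hand side. A pair $(i,j)$ lies in $\{i \leq -k-1 \text{ or } j \leq -l-1\}$ precisely when it is \emph{not} the case that both $-i \leq k$ and $-j \leq l$ hold, i.e., when $(-i,-j) \not\leq (k,l)$. Hence
\[
\bigcap_{(k,l) \in R} \{i \leq -k-1 \text{ or } j \leq -l-1\}
= \{(i,j) \in \Z^2 \mid \forall (k,l) \in R,\ (-i,-j) \not\leq (k,l)\}.
\]

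Next I would rewrite the left-hand side similarly. By definition $(i,j) \in R^*$ iff $(-i,-j) \notin R$. Because $R$ is a lower set, membership of $(-i,-j)$ in $R$ is equivalent to the existence of some $(k,l) \in R$ with $(-i,-j) \leq (k,l)$: the forward direction is trivial (take $(k,l)=(-i,-j)$), and the reverse is the defining property of a lower set. Negating this equivalence gives
\[
(i,j) \in R^* \iff \forall (k,l) \in R,\ (-i,-j) \not\leq (k,l),
\]
which is exactly the description of the right-hand side obtained above. So the two sets coincide.

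There is essentially no obstacle in this argument; it is a direct unfolding of the definitions together with the lower-set property of $R$. The only subtle point is the use of the lower-set property, which is what makes ``$(-i,-j) \in R$'' equivalent to the existential statement needed to match the intersection form.
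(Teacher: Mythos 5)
Your proof is correct and is essentially the same argument as the paper's: unwind the definition of $R^*$ and of the intersection, then use the lower-set property of $R$ to match them. The only difference is presentational — the paper proves the two inclusions separately, whereas you rewrite both sides as the single condition $\forall (k,l)\in R,\ (-i,-j)\not\leq(k,l)$ and observe they agree.
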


\begin{proof}
We first prove $R^* \subset  \bigcap_{(k,l)\in R} 
\{ i \leq -k-1 \text{ {\rm or }} j \leq -l-1 \}$.
Let $(i,j) \in R^*$. Then, for any $(k,l) \in R$, either $-i>k$ or $-j > l$ holds.
Equivalently, we have $i \leq -k-1$ or $j \leq -l-1$.

Next, we prove $R^* \supset  \bigcap_{(k,l)\in R} 
\{ i \leq -k-1 \text{ {\rm or }} j \leq -l-1 \}$.
Let $(i,j) \in \cap_{(k,l) \in R}\{ i \leq -k-1 \text{ {\rm or }} j \leq -l-1 \}$.
Then, for any $(k,l) \in R$, either $-i > k$ or $-j > l$ holds.
In particular, we have $(-i,-j) \neq (k,l)$, and hence 
$(-i,-j) \not\in R$.
\end{proof}
Using the above lemmas, we have the following proposition,
which plays an essential role in the proof of 
\Cref{thm: duality}.
\begin{proposition}
\label{dual complex and region}
Let $C := CFK^\infty(K)$. Then, for any $R \in \CR(\z^2)$, 
we have
\[
C^*_R = \{ \psi \in C^* \mid \varepsilon \circ \psi (C_{R^*}) = 0\}.
\]
\end{proposition}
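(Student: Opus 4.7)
The plan is to work with an explicit $\F$-basis of both $C$ and $C^*$ coming from a filtered basis of $C$, then match the coefficients of a general dual chain $\psi$ against the two conditions.

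First I would fix a filtered basis $\{x_k\}_{1 \leq k \leq r}$ of $C$ as in property (5). The monomials $\{U^n x_k\}_{n \in \Z,\, 1 \leq k \leq r}$ form an $\F$-basis of $C$; each such monomial is homogeneous with $(\Alg(U^n x_k), \Alex(U^n x_k)) = (\Alg(x_k) - n, \Alex(x_k) - n)$. Since the $\Z^2$-filtration is induced from $(\{\falg{i}\}, \{\falex{j}\})$, one verifies directly from property (5) that for any closed region $R$,
\[
C_R = \spanF \{\, U^n x_k \,\mid\, (\Alg(U^n x_k), \Alex(U^n x_k)) \in R \,\}.
\]

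Next I would carry out the analogous description on the dual side. Taking the $\Lambda$-dual basis $\{x_k^*\}$ gives the $\F$-basis $\{U^m x_k^*\}_{m \in \Z,\, 1 \leq k \leq r}$ of $C^*$. A short computation from the definition of $\falg{i}(C^*)$ and $\falex{j}(C^*)$ yields
\[
\bigl(\Alg(U^m x_k^*), \Alex(U^m x_k^*)\bigr) \;=\; \bigl(-\Alg(x_k) - m,\; -\Alex(x_k) - m\bigr),
\]
and hence
\[
C^*_R = \spanF \{\, U^m x_k^* \,\mid\, (-\Alg(x_k) - m,\, -\Alex(x_k) - m) \in R \,\}.
\]
The key elementary pairing is $\varepsilon \circ (U^m x_k^*)(U^n x_\ell) = \delta_{k,\ell}\,\delta_{m+n,\,0}$.

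Writing $\psi = \sum_{m,k} c_{m,k} U^m x_k^*$, membership in the right-hand set of the claim becomes the condition: for every $(n,\ell)$ with $(\Alg(x_\ell)-n, \Alex(x_\ell)-n) \in R^*$ one has $c_{-n,\ell} = 0$. Reindexing via $(m,k) = (-n,\ell)$ this says $c_{m,k} = 0$ whenever $(-\Alg(x_k)-m, -\Alex(x_k)-m) \in -R^*$. Using $R^* = \Z^2 \setminus (-R)$ gives $-R^* = \Z^2 \setminus R$, so the condition becomes: $c_{m,k} \neq 0 \Rightarrow (-\Alg(x_k)-m, -\Alex(x_k)-m) \in R$. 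By the description of $C^*_R$ above, this is exactly $\psi \in C^*_R$.

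The only subtle step will be the calculation of $\Alg(U^m x_k^*)$ and $\Alex(U^m x_k^*)$ from the defining formulas for $\falg{i}(C^*)$ and $\falex{j}(C^*)$; the rest is bookkeeping with the pairing and the set identity $-R^* = \Z^2 \setminus R$. Once those gradings on the dual basis are established, the equality of the two sets is a direct coefficient-wise comparison.
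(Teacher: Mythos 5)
Your argument is correct and leads to the same conclusion, but it takes a more uniform, coordinate-level route than the paper. You fix a filtered basis $\{x_k\}$ of $C$ from the start, describe $C_R$ and $C^*_R$ explicitly as $\F$-spans of monomials $U^n x_k$ and $U^m x_k^*$ respectively, compute the bifiltration levels $(\Alg(U^m x_k^*),\Alex(U^m x_k^*)) = (-\Alg(x_k)-m,\, -\Alex(x_k)-m)$, and then reduce the desired set equality to a coefficient-by-coefficient comparison using $\varepsilon\circ(U^m x_k^*)(U^n x_\ell) = \delta_{k,\ell}\delta_{m+n,0}$ and the identity $-R^* = \Z^2\setminus R$. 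The paper's proof, by contrast, is asymmetric: the inclusion $C^*_R \subset \{\psi \mid \varepsilon\circ\psi(C_{R^*})=0\}$ is obtained by a soft argument — expanding the sum $C^*_R = \sum_{(k,l)\in R} \falg{k}(C^*)\cap\falex{l}(C^*)$ via the definitions of the dual filtrations and the lemma $R^* = \bigcap_{(k,l)\in R}\{i\le -k-1 \text{ or } j\le -l-1\}$, then using monotonicity of $\varepsilon\circ\psi$ in the argument — while the reverse inclusion is established with a filtered-basis argument close to yours. Your version is cleaner in being a single calculation, but you should make explicit that you are invoking the fact (noted in the paper just before \Cref{lem: dual epsilon}) that $\{x_k^*\}$ is a filtered basis of $C^*$; the grading computation you carry out in fact essentially re-proves this, but the step ``hence $C^*_R = \spanF\{\dots\}$'' silently relies on it, so it deserves a sentence. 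Also, a small hygienic point: in verifying $\varepsilon\circ(U^m x_k^*)(\falg{-i-1})=0 \Leftrightarrow i \ge -\Alg(x_k)-m$, you should note that the only basis monomial of $C$ that pairs nontrivially with $U^m x_k^*$ is $U^{-m}x_k$, so the vanishing condition is equivalent to $U^{-m}x_k\notin\falg{-i-1}$; once that is said, the formula for the dual gradings and the rest of the bookkeeping go through exactly as you outlined.
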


\begin{proof}
We first prove 
$C^*_R \subset \{ \psi \in C^* \mid \varepsilon \circ \psi (C_{R^*}) = 0\}$.
Indeed, this follows from the inequalities
\begin{eqnarray*}
C^*_R &=& \sum_{(k,l) \in R} \falg{k}(C^*) \cap \falex{l}(C^*)\\
 &=& \sum_{(k,l) \in R} \left\{ \psi \in C^* \  \middle| \ \varepsilon \circ 
\psi (\falg{-k-1} + \falex{-l-1})=0 \right\} \\
&=& \sum_{(k,l) \in R} \left\{ \psi \in C^* \  \middle| \ \varepsilon \circ 
\psi (C_{\{i \leq -k-1 \text{ {\rm or} } j \leq -l-1\}})=0 \right\} \\
&\subset& \left\{ \psi \in C^* \  \middle| \ \varepsilon \circ 
\psi (C_{\bigcap_{(k,l) \in R} \{i \leq -k-1 \text{ {\rm or} } j \leq -l-1\}})=0 \right\}\\
&=& \left\{ \psi \in C^* \  \middle| \ \varepsilon \circ 
\psi (C_{R^*})=0 \right\}.
\end{eqnarray*}

Next, we prove
$C^*_R \supset \{ \psi \in C^* \mid \varepsilon \circ \psi (C_{R^*}) = 0\}$.
Take a filtered basis $\{x_s\}_{1 \leq s \leq r}$ for $C$, and then
any $\psi \in C^*$ is written as 
$\psi = \sum_{1 \leq s \leq r, \ t \in \Z} a_{s,t} U^t x^*_s$,
where $a_{s,t}=0$ for all but finitely many $(s,t)$. 
Now, suppose that $\varepsilon \circ \psi (C_{R^*})= \varepsilon \circ 
\psi (C_{\bigcap_{(k,l) \in R} \{i \leq -k-1 \text{ {\rm or} } j \leq -l-1\}})=0$.
Then, we see that $U^{-t}x_s \not\in C_{R^*}$ for any $(s,t)$ with $a_{s,t} \neq 0$.
This implies that 
$R_{U^{-t}x_s}=R_{(\Alg(U^{-t}x_s), \Alex(U^{-t}x_s))} \not\in R^*$, and hence
$(\Alg(U^{-t}x_s), \Alex(U^{-t}x_s)) \geq (-k,-l)$
for some $(k,l) \in R$. In particular, we have 
$$U^t x_s^*(\falg{-k-1}+\falex{-l-1})=0$$ 
for such $(k,l)$. Consequently, we have
$$
\psi = \sum_{a_{s,t}\neq 0} a_{s,t} U^t x^*_s
\in  \sum_{(k,l) \in R} \left\{ \psi \in C^* \  \middle| \ \varepsilon \circ 
\psi (\falg{-k-1}+\falex{-l-1})=0 \right\} = C^*_R.
$$
\end{proof}

\subsection{Proof of \Cref{thm: duality}}
\label{subsec: proof of duality}
Now we prove \Cref{thm: duality}.
\renewcommand{\thesection}{\arabic{section}}
\setcounter{section}{2}
\setcounter{proposition}{15}
\begin{theorem}
For any knot $K$, the equalities
$$
\tG_0(K^*) = \{ R \in \CR(\z^2) \mid 
\forall R' \in \G_0(K),\ R \cap (-R') \neq \varnothing \}
$$
and
$$
\G_0(K^*) = \min\{ R \in \CR(\z^2) \mid 
\forall R' \in \G_0(K),\ R \cap (-R') \neq \varnothing \}
$$
hold. 
\end{theorem}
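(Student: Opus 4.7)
The plan is to translate realizability of a region for $K^*$ into a non-realizability condition for $K$ via the $\F$-linear duality of $CFK^{\infty}(K)$, then read off the stated combinatorial description.

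Set $C := CFK^{\infty}(K)$. By \Cref{dual thm} together with \Cref{prop: G_0 deg n} (specialized to degree $0$ and to the subalgebra $\F \subset \Lambda$), I would first identify
\[
\tG_0(K^*) \;=\; \tG_0^{(0)}(C^*).
\]
Next, using \Cref{lem: dual epsilon}, I would replace $C^*$ at degree $0$ with the $\F$-cochain complex $\Hom_{\F}(C_*,\F)$ via the isomorphism $\varepsilon_0$. A cycle $\psi \in C^*$ of degree $0$ with $[\psi]\neq 0$ corresponds precisely to a cocycle $\varphi := \varepsilon_0(\psi) \in \Hom_{\F}(C_0,\F)$ whose class in $H^0(C_*;\F)\cong \Hom_\F(H_0(C),\F)\cong \F$ evaluates to $1$ on a homological generator of $C$. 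Combining this with \Cref{dual complex and region}, which characterizes $C^*_R$ as those $\psi$ with $\varepsilon\circ\psi(C_{R^*})=0$, the condition $R\in \tG_0^{(0)}(C^*)$ translates to the existence of a cocycle on $C_0$ vanishing on $(C_{R^*})_0$ and representing the generator of $H^0(C;\F)$.

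The heart of the argument is then the following equivalence, which I would establish by the long exact cohomology sequence of the pair $(C, C_{R^*})$ supplied by \Cref{exact} (dualized via \Cref{lem: dual epsilon}): such a cocycle exists if and only if the map $H_0(C_{R^*})\to H_0(C)$ is the zero map, i.e.\ no cycle in $C_{R^*}$ represents the generator of $H_0(C)$. By definition this last condition is exactly $R^*\notin \tG_0(K)$. Applying \Cref{thm: minimalize}, this is equivalent to saying no $R'\in \G_0(K)$ satisfies $R'\subset R^*$. This step — passing through duality and the long exact sequence — is the main conceptual content of the proof; I expect it to be the principal, though not overly difficult, obstacle.

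The remaining piece is the purely set-theoretic identity
\[
R'\subset R^* \;\Longleftrightarrow\; R'\cap(-R)=\varnothing \;\Longleftrightarrow\; R\cap(-R')=\varnothing,
\]
which follows directly from $R^* = \Z^2\setminus(-R)$ and the fact that negation is an involution on subsets of $\Z^2$. Combining everything yields
\[
R\in \tG_0(K^*) \;\Longleftrightarrow\; \forall R'\in \G_0(K),\ R\cap(-R')\neq \varnothing,
\]
which is the first asserted equality. The second equality is then immediate by applying $\min$ to both sides, since $\G_0(K^*) = \min \tG_0(K^*)$ by definition.
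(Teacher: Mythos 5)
Your proof is correct and follows essentially the same route as the paper: both pass through the duality $CFK^\infty(K^*)\heq{\z^2}{\Lambda}(CFK^\infty(K))^*$, the characterization $C^*_R=\{\psi\mid\varepsilon\circ\psi(C_{R^*})=0\}$ from \Cref{dual complex and region}, the $\varepsilon_0$-isomorphism of \Cref{lem: dual epsilon}, the quotient complex $C/C_{R^*}$ with its long exact sequence, \Cref{thm: minimalize}, and the elementary identity $R'\subset R^*\Leftrightarrow R\cap(-R')=\varnothing$. Your streamlined use of the long exact sequence together with field-coefficient duality sidesteps the paper's explicit appeal to \Cref{prop: realizer} and its hand-built cocycle, but this is a minor packaging difference rather than a genuinely different argument.
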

\renewcommand{\thesection}{\Alph{section}}
\setcounter{section}{1}
\setcounter{proposition}{4}
 The proof is an analogy of \cite[Lemma 2.28]{2019arXiv190709116S}.
\begin{proof}
By the definition of $\G_0(K)$, it is sufficient to prove the first equality. We denote by $\G_0(K)^{\perp}$ 
the right hand side of the desired equality; namely, we set
\[
\G_0(K)^{\perp} := \{ R \in \CR(\z^2) \mid 
\forall R' \in \G_0(K),\ R \cap (-R') \neq \varnothing \}.
\]
Then, by the definition of dual regions, it is obvious that
\[
\G_0(K)^{\perp} = \{ R \in \CR(\z^2) \mid 
\forall R' \in \G_0(K),\ R' \not\subset R^*\}.
\]

We first prove $\tG_0(K^*) \subset \G_0(K)^{\perp}$.
Let $R \in \tG_0(K^*)$ and $C:=CFK^\infty(K)$. 
Then, by \Cref{dual complex and region},
there exists a homological generator $\varphi \in C^*_0$
lying in 
$$
C^*_R = \left\{ \psi \in C^* \  \middle| \ \varepsilon \circ 
\psi (C_{R^*})=0 \right\}.
$$
In particular, we have
$\varepsilon \circ \varphi(C_{R^*}) =0$, 
and $\varepsilon \circ \varphi$ is
decomposed as $\varepsilon \circ \varphi = \widetilde{\varphi} \circ p$
where $\widetilde{\varphi} \in \Hom_{\F}(C_{R^*}, \F)$ 
is a cocycle and $p : C \to C/C_{R^*}$ 
is the projection.
Now, let $R' \in \G_0(C)$ and $x$
be a homological generator
whose closure gives $R'$.
Then we have $\widetilde{\varphi}(p (x))= (\varepsilon \circ \varphi)(x)=1$.
This implies that $p(x) \neq 0$, and hence $R^* \not\supset R'$. 
Therefore, we have $R \in \G_0(K)^{\perp}$.

Next, we prove $\tG_0(K^*) \supset \G_0(K)^{\perp}$.
Suppose that 
$R \in \G_0(K)^{\perp}$. 
Then, by Theorem~\ref{thm: minimalize}, we see that $R^* \not\in \tG_0(C)$,
and hence $p_{*,0} \colon H_0(C) \to H_0(C/C_{R^*})$ is injective.
Let $x \in C_0$ be a homological generator,
and then we have $p_{*,0}([x]) \neq 0$.
In addition, $\dim_{\F}(C/C_{R^*})_0$
is finite, and hence we can take a finite $\F$-basis for 
$H_0(C/C_{R^*})$ containing $p_{*,0}([x])$.
Thus, by using the identification (given by 
\Cref{lem: dual epsilon})
$$
\Hom_{\F}(H_0(C/C_{R^*}),\F) \cong H^0(C/C_{R^*} ;\F),
$$
 we can take a cocycle 
$\psi \in \Hom_{\F}((C/C_{R^*})_0,\F)$
whose cohomology class is the dual $(p_{*,0}([x]))^*$.
Moreover, the map 
$\varepsilon_0$ in \Cref{lem: dual epsilon} is bijective,
and hence we can take the inverse $\varphi := \varepsilon^{-1}_0(\psi \circ p) \in C^*_0$.
Note that since $\varepsilon \circ \varphi(x) = \psi(p(x))=1$,
the element $\varphi \in C^*_0$ is a homological generator.
Moreover, the equalities
$$
\varepsilon \circ \varphi 
(C_{R^*}) = 
\psi \circ p(C_{R^*})
= 0
$$
hold, and hence $\varphi$ lies in $C^*_R$. This proves $R \in \tG_0(K^*)$.
\end{proof}

\subsection{Computing $\G_0(K^*)$ from $\G_0(K)$}
\label{subsec: computing G_0(K^*) from G_0(K)}
For any non-empty finite set $\mathcal{S}=\{R_i\}_{i=1}^N$
of semi-simple regions,
let 
\[
S^{\perp} := \{ R \in \CR(\z^2) \mid 
\forall R_i \in \mathcal{S},\ R \cap (-R_i) \neq \varnothing\}.
\]
Then we have the following duality theorem
between $\mathcal{S}$ and $\min \mathcal{S}^{\perp}$.
\begin{theorem}
\label{char orthonormal}
The equality
$$
\min(\mathcal{S}^{\perp}) = 
\min \left\{cl(\{-p_i\}_{i=1}^N) 
\ \middle| \  (p_1, \ldots, p_N) \in \prod_{i=1}^N c(R_i) \right\}
$$
holds.
\end{theorem}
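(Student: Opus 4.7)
My plan is to sandwich $\mathcal{S}^\perp$ between the auxiliary family
\[
\mathcal{T} := \left\{cl(\{-p_i\}_{i=1}^N) \ \middle|\ (p_1,\ldots,p_N)\in \prod_{i=1}^N c(R_i)\right\}
\]
in two directions and then extract the equality of minimal elements. Concretely, I will establish two claims:

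\textbf{Claim (A):} $\mathcal{T} \subset \mathcal{S}^\perp$. Given $(p_1,\ldots,p_N) \in \prod c(R_i)$ and setting $T := cl(\{-p_i\})$, each $-p_j$ lies in $T$ and $p_j \in c(R_j) \subset R_j$, hence $-p_j \in T \cap (-R_j) \neq \varnothing$.

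\textbf{Claim (B):} For every $R \in \mathcal{S}^\perp$ there exists $T \in \mathcal{T}$ with $T \subset R$. For each $i$ pick $q_i \in R \cap (-R_i)$, so $-q_i \in R_i$. Since $R_i$ is semi-simple, \Cref{lem: char ss} gives $R_i = cl(c(R_i)) = \bigcup_{p \in c(R_i)} R_p$, so there is $p_i \in c(R_i)$ with $-q_i \leq p_i$, equivalently $-p_i \leq q_i$. Because $R$ is a lower set containing $q_i$, we get $-p_i \in R$ for every $i$, whence $cl(\{-p_i\}) \subset R$.

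With Claims (A) and (B) in hand, the equality $\min \mathcal{S}^\perp = \min \mathcal{T}$ follows by a standard two-way argument: if $R \in \min \mathcal{S}^\perp$, apply (B) to obtain $T \in \mathcal{T} \subset \mathcal{S}^\perp$ with $T \subset R$, so by minimality $R = T \in \mathcal{T}$, and $R$ must also be minimal inside $\mathcal{T}$. Conversely, if $T \in \min \mathcal{T}$ and $R \in \mathcal{S}^\perp$ satisfies $R \subset T$, then (B) yields $T' \in \mathcal{T}$ with $T' \subset R \subset T$, forcing $T' = T = R$ by minimality in $\mathcal{T}$.

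The only mildly delicate step is Claim (B), but once one observes that $\mathcal{S}^\perp$ is characterized by lower-set membership conditions and that corners play the role of maxima under $cl$ (as codified by \Cref{max S = max R} and \Cref{lem: char ss}), it reduces to choosing a single corner $p_i$ above each witness point $-q_i$. The rest is book-keeping on lower sets and the definition of $\min$; no further input from the theory of $CFK^\infty$ or $\G_0$ is needed, so the statement is of purely combinatorial nature.
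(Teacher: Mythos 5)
Your proof is correct and follows essentially the same route as the paper: the core step in both is to use Lemma~\ref{lem: char ss} (that $R_i = cl(c(R_i))$) to select, for a given $R \in \mathcal{S}^\perp$, one corner $p_i \in c(R_i)$ per index so that $cl(\{-p_i\}) \subset R$, and then conclude by a two-way minimality argument. The only cosmetic difference is that the paper detours through the dual region $R^*$ to rephrase $R\cap(-R_i)\neq\varnothing$ as $R_i\not\subset R^*$, while you work directly with a witness point $q_i$ of the intersection and the lower-set property of $R$; you also extract the sandwiching fact as a standalone Claim (B) valid for all of $\mathcal{S}^\perp$ rather than only minimal elements, which makes the final bookkeeping a little cleaner but is mathematically the same argument.
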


\begin{remark}
It can happen that 
$p_i \leq p_j$ for different $i$ and $j$,
and hence the order of $\{p_i\}_{i=1}^N$ can be less than $N$. 
\end{remark}
Recall that $\G_0(K) \subset \CR^{ss}(\z^2)$ for any knot $K$,
and hence, combining with \Cref{thm: duality},
we have the following formula.
\begin{corollary}
\label{char G_0 of mirror}
Let $\G_0(K) = \{R_i\}_{i=1}^N$.
Then, the equality
$$
\G_0(K^*) = 
\min \left\{cl(\{-p_i\}_{i=1}^N) 
\ \middle| \  (p_1, \ldots, p_N) \in \prod_{i=1}^N c(R_i) \right\}
$$
holds.
In particular, $\G_0(K^*)$ is algorithmically determined from
$\G_0(K)$.
\end{corollary}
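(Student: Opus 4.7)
My plan is to unwind the definition of $\mathcal{S}^{\perp}$ into a statement about corners, then recognize the right-hand side as a ``generating set'' of $\mathcal{S}^{\perp}$ with respect to inclusion.

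First I would rewrite the membership condition for $\mathcal{S}^{\perp}$. For $R \in \CR(\Z^2)$, the condition $R \cap (-R_i) \neq \varnothing$ means there exists $q_i \in R_i$ with $-q_i \in R$. Applying \Cref{replace to maximal}, there is a corner $p_i \in c(R_i)$ with $p_i \geq q_i$, hence $-p_i \leq -q_i$; since $R$ is a lower set, $-p_i \in R$. Conversely any $p_i \in c(R_i) \subset R_i$ with $-p_i \in R$ already witnesses $R \cap (-R_i) \neq \varnothing$. Thus
\[
    R \in \mathcal{S}^{\perp} \iff \exists (p_1,\ldots,p_N) \in \prod_{i=1}^N c(R_i) \text{ with } \{-p_i\}_{i=1}^N \subset R.
\]
Using once more that $R$ is a lower set, the inclusion $\{-p_i\} \subset R$ is equivalent to $cl(\{-p_i\}) \subset R$.

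Next I would introduce
\[
    \mathcal{T} := \left\{ cl(\{-p_i\}_{i=1}^N) \ \middle| \ (p_1,\ldots,p_N) \in \prod_{i=1}^N c(R_i) \right\}
\]
and observe two facts. First, $\mathcal{T} \subset \mathcal{S}^{\perp}$: for any tuple $(p_1,\ldots,p_N)$, the region $cl(\{-p_i\})$ itself contains $-p_i$ for every $i$, so it meets each $-R_i$. Second, by the reformulation above, every $R \in \mathcal{S}^{\perp}$ contains some element of $\mathcal{T}$.

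Finally I would invoke the general poset fact: if $\mathcal{T} \subset \mathcal{S}^{\perp}$ and each element of $\mathcal{S}^{\perp}$ dominates (contains) some element of $\mathcal{T}$, then $\min \mathcal{S}^{\perp} = \min \mathcal{T}$. The forward inclusion is immediate: a minimal $R \in \mathcal{S}^{\perp}$ contains some $T \in \mathcal{T} \subset \mathcal{S}^{\perp}$, so by minimality $T = R$, and $R$ is also minimal in $\mathcal{T}$. For the reverse, if $R \in \min \mathcal{T}$ and $R' \in \mathcal{S}^{\perp}$ satisfies $R' \subset R$, pick $T \in \mathcal{T}$ with $T \subset R'$; then $T \subset R$ with both in $\mathcal{T}$ forces $T = R$, so $R' = R$. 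This yields the theorem.

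The only place requiring care is the equivalence between ``$-q_i \in R$ for some $q_i \in R_i$'' and ``$-p_i \in R$ for some corner $p_i \in c(R_i)$''; this is where the lower-set property of $R$ combines with \Cref{replace to maximal}, and it is the substantive content of the argument. Everything else is a routine manipulation of minima in a poset under inclusion, so I do not anticipate any further obstacle.
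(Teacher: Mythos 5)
Your proposal is correct and essentially reproduces the paper's proof of Theorem~\ref{char orthonormal}, from which the corollary follows immediately via Theorem~\ref{thm: duality} and Proposition~\ref{prop: realizer}: you factor out a general poset lemma where the paper argues the two inclusions directly, but the content is the same. Note only that \Cref{replace to maximal} as stated applies to a finite set $S$, so to dominate $q_i\in R_i$ by a corner you should first write $R_i=cl(c(R_i))$ (\Cref{lem: char ss}) and then use the definition of $cl$; the paper sidesteps this by arguing through the dual region $R^*$ and its lower-set property.
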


Now we prove \Cref{char orthonormal}.

\def\proofname{Proof of Theorem~\ref{char orthonormal}}
\begin{proof}
Set $\mathcal{S}' := \{cl(\{-p_i\}_{i=1}^N) 
 \mid   (p_1, \ldots, p_N) \in \prod_{i=1}^N c(R_i) \}$.
We first note that for any 
$(p_1, \ldots, p_N) \in \prod_{i=1}^N c(R_i)$,
we have
\[
-p_i \in cl(\{-p_i\}_{i=1}^N) \cap (-R_i) \neq \varnothing.
\]
This implies $\mathcal{S}' \subset \mathcal{S}^{\perp}$. 

Now we prove $\min(\mathcal{S}^{\perp}) \subset \min \mathcal{S}'$.
 Let $R \in \min(\mathcal{S}^{\perp})$.
Then $R \cap (-R_i) \neq \varnothing$ for any $1 \leq i \leq N$,
which is equivalent to $R_i \not\subset R^*$.
Here, \Cref{lem: char ss} gives $R_i = cl(c(R_i))$,
and hence there exists an element $p_i \in c(R_i)$
such that $p_i \not\in R^*$.
In particular, we have $-p_i \in R$, 
and hence $R \supset cl(\{-p_i\}_{i=1}^N)$.
Since $cl(\{-p_i\}_{i=1}^N) \in \mathcal{S}' \subset \mathcal{S}^{\perp}$, 
it follows from the minimality
of $R$ in $\mathcal{S}^{\perp}$ that 
$$
R= cl(\{-p_i\}_{i=1}^N) \in \mathcal{S}'.$$
The minimality of $R$ in $\mathcal{S}'$ directly follows from
the minimality in $\mathcal{S}^{\perp}$. Therefore, we have $R \in \min S'$.

Next, we prove $\min(\mathcal{S}^{\perp}) \supset \min \mathcal{S}'$.
Let $cl(\{-p_i\}_{i=1}^N) \in \min \mathcal{S}'$.
Then we have $cl(\{-p_i\}_{i=1}^N) \in \mathcal{S}^{\perp}$.
Suppose that $R \in \mathcal{S}^{\perp}$ satisfies $R \subset cl(\{-p_i\}_{i=1}^N)$.
Then, in a similar way to the converse case, we can find an element 
$cl(\{-q_i\}_{i=1}^N) \in \mathcal{S}'$ such that $cl(\{-q_i\}_{i=1}^N) \subset R$.
In particular, we have
$$
cl(\{-q_i\}_{i=1}^N) \subset R \subset cl(\{-p_i\}_{i=1}^N),
$$
and hence the minimality of $cl(\{-p_i\}_{i=1}^N)$ in $\mathcal{S}'$ gives
$$
cl(\{-q_i\}_{i=1}^N) = R = cl(\{-p_i\}_{i=1}^N).
$$
This implies $cl(\{-p_i\}_{i=1}^N) \in \min (\mathcal{S}^{\perp})$.
\end{proof}
\def\proofname{Proof}

    \section{On sparse linear systems} \label{subsec:sparse-linear-system}

Here we briefly discuss computational methods for handling large sparse linear system over any computational field, such as $\mathbb{Q}$ or $\F_p$. There are essentially two families of algorithms for handling linear systems: direct methods and iterative methods. Given a matrix $A$, direct methods (such as Gaussian elimination and LU factorization) perform elementary operations on $A$ so that information can be read directly from the modified matrix $A'$, whereas iterative methods (such as the Wiedemann algorithm) multiply $A$ repeatedly on a randomly generated vector seeking for a solution. The implementation of our program is based on the former.

\textit{LU factorization} is one of the standard methods for handling (dense) linear systems. Suppose $A$ is an arbitrary $n$-by-$m$ matrix of rank $r$. It is known that, after some permutations of rows and columns, $A$ can be factored into a product of two matrices $L$ and $U$, where $L$ is a lower triangular $n$-by-$r$ matrix and $U$ is an upper triangular $r$-by-$m$ matrix. To be precise, there exists some permutation matrices $P$ and $Q$ such that
\[
    P A Q = L U
\]
holds (in some literature it is expressed in the form $A = PLUQ$ and called the $PLUQ$ factorization of $A$). This can be seen as factoring a linear map represented by $A$ into a composition of a surjection (represented by $U$) and an injection (represented by $L$). Having computed $PAQ = LU$, solving a linear system $Ax = b$ breaks up into solving $Ly = Pb$ and $U(Q^{-1}x) = y$. The former equation may or may not have a solution, and in case it does the solution is unique. The latter equation always has a solution with degree of freedom $m - r$. Both equations can be solved easily from the structure of $L$ and $U$. In the following, for simplicity, we omit the permutations $P, Q$ and write $A \sim LU$. 

LU factorization is obtained from Gaussian elimination. Suppose $A = (a_{ij})_{1 \leq i \leq n,\ 1 \leq j \leq m}$ is non-zero. Then there exists some non-zero component of $A$, which we may assume that it is $a_{11}$. We may write
\[
    A \sim 
    \begin{pmatrix}
    1 \\
    a_{21} / a_{11} \\
    \vdots \\
    a_{n1} / a_{11} \\
    \end{pmatrix}
    \begin{pmatrix}
    a_{11} & a_{12} & \cdots & a_{1m}
    \end{pmatrix}
    +
    \begin{pmatrix}
    0 & 0 & \cdots & 0 \\ 
    0 & \\
    \vdots & & \mbox{\large $A'$}\\ 
    0 & 
    \end{pmatrix}.
\]
If $A' = (a'_{ij})_{2 \leq i \leq n,\ 2 \leq j \leq m}$ is non-zero, then we repeat the same process and obtain
\[
    A \sim
    \begin{pmatrix}
    1 & 0 \\
    a_{21} / a_{11} & 1\\
    \vdots & \vdots \\
    a_{n1} / a_{11} & a'_{n2} / a'_{22}\\
    \end{pmatrix}
    \begin{pmatrix}
    a_{11} & a_{12} & \cdots & a_{1m} \\
    0 & a'_{22} & \cdots & a'_{2m}
    \end{pmatrix}
    +
    \begin{pmatrix}
    0 & 0 & \cdots & 0 \\ 
    0 & 0 & \cdots & 0 \\ 
    \vdots & \vdots & \mbox{\large $A''$}\\ 
    0 & 0
    \end{pmatrix}
\]
Repeating this process until the remaining block becomes zero yields the desired factorization. Note that at each step, permutations of the complementary submatrix preserves the structure of the partially computed $L$ and $U$. This process can also be seen as iteratively computing the \textit{Schur complement} of the upper left non-zero component. In general, for a block matrix 
\[
    X = 
    \begin{pmatrix} A & B \\ C & D \end{pmatrix}
\]
with $A$ non-singular, the \textit{Schur complement} of $A$ is defined by 
\[
    S := D - C A^{-1} B.
\]

Now we assume that the matrix $A$ is large and sparse, say $n, m > 500,000$ and its density is below $0.01\%$. In such case it is impractical to calculate its LU factorization directly. For this problem, Bouillaguet, Delaplace and Voge \cite{bouillaguet2017parallel} propose a method of finding a large amount of \textit{structural pivots}, and the algorithm is implemented in their program \texttt{SpaSM} \cite{spasm}. Structural pivots are non-zero entries of $A$ such that after some permutations $A$ is transformed into a matrix of the form
\[
    \begin{pmatrix}
    U & B \\
    C & D
    \end{pmatrix}
\]
where $U$ is a regular upper triangular matrix with pivots lying in its diagonal. Given a set of structural pivots, we immediately obtain a partial LU factorization of $A$ as
\[
    A \sim
    \begin{pmatrix}
    I \\ C'
    \end{pmatrix}
    \begin{pmatrix}
    U & B
    \end{pmatrix}
    +
    \begin{pmatrix}
    O & O \\
    O & S
    \end{pmatrix}
\]
where $C' = CU^{-1}$, and $S = D - C U^{-1} B$ is the Schur complement of $U$. To complete the LU factorization of $A$, we either iterate the same process on $S$, or switch to dense LU factorization, depending on the size and density of $S$. 

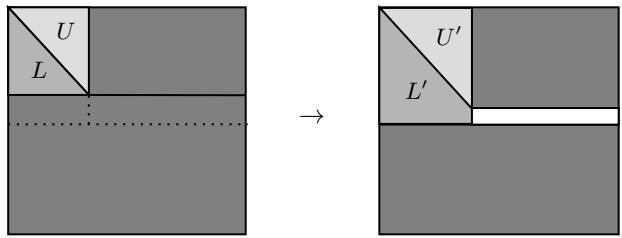
\begin{figure}[t]
    \centering
    \tikzset{every picture/.style={line width=0.75pt}} 

\begin{tikzpicture}[x=0.75pt,y=0.75pt,yscale=-1,xscale=1]

\draw  [fill={rgb, 255:red, 128; green, 128; blue, 128 }  ,fill opacity=1 ] (80.11,12) -- (199.91,12) -- (199.91,126.58) -- (80.11,126.58) -- cycle ;
\draw  [fill={rgb, 255:red, 221; green, 221; blue, 221 }  ,fill opacity=1 ] (120.8,56.24) -- (80.11,12) -- (120.8,12) -- cycle ;
\draw  [fill={rgb, 255:red, 181; green, 181; blue, 181 }  ,fill opacity=1 ] (80.11,12) -- (120.8,56.24) -- (80.11,56.24) -- cycle ;
\draw    (120.8,56.24) -- (199.91,56.37) ;
\draw  [dash pattern={on 0.84pt off 2.51pt}]  (80,71.05) -- (200.99,71.05) ;
\draw  [fill={rgb, 255:red, 128; green, 128; blue, 128 }  ,fill opacity=1 ] (267.29,12) -- (388,12) -- (388,126.58) -- (267.29,126.58) -- cycle ;
\draw  [fill={rgb, 255:red, 220; green, 220; blue, 220 }  ,fill opacity=1 ] (314,70.53) -- (267.58,12.31) -- (314,12.31) -- cycle ;
\draw  [fill={rgb, 255:red, 181; green, 181; blue, 181 }  ,fill opacity=1 ] (267.29,12) -- (321,70.84) -- (267.29,70.84) -- cycle ;
\draw    (267.29,71.2) -- (388,71.2) ;
\draw  [dash pattern={on 0.84pt off 2.51pt}]  (120.8,71.05) -- (120.8,55.36) ;
\draw  [draw opacity=0] (11,10.83) -- (26,10.83) -- (26,27.27) -- (11,27.27) -- cycle ;
\draw  [fill={rgb, 255:red, 255; green, 255; blue, 255 }  ,fill opacity=1 ] (314,62.84) -- (388,62.84) -- (388,71.2) -- (314,71.2) -- cycle ;

\draw (697,131) node    {$\rightarrow $};
\draw (109.36,23.69) node  [font=\small]  {$U$};
\draw (95.67,43.49) node  [font=\small]  {$L$};
\draw (302.65,26.49) node  [font=\small]  {$U'$};
\draw (286.28,53.55) node  [font=\small]  {$L'$};
\draw (233.34,68.13) node    {$\rightarrow $};

\end{tikzpicture}
    \caption{Incremental LU factorization}
    \label{fig:incremental-LU}
\end{figure}{}

The heavy part of the above described method is in the computation of the Schur complement. For our purpose, recall from \Cref{subsec:realizability-by-linear-system} that we only need to tell whether the system has a solution or not. Assuming that we have obtained pivots whose number is close to the rank of $A$, only a few more independent columns might suffice to find one solution. Likewise, only a few more independent rows might suffice to detect that the system has no solution. Thus the approach we take is the following: after we have obtained the pivots and permuted the matrix correspondingly, we divide the remaining columns (resp. rows) into small chunks and incrementally expand the factorization by performing the factorization on the submatrices (see \Cref{fig:incremental-LU}). At each step we check whether the subsystem has a solution or not. This enables us to exit the computation in an earlier stage whenever possible.

    \bibliographystyle{plain}
    \bibliography{bibliography.bib}

\end{document}